\titleformat*{\section}{\large\bfseries}
\numberwithin{equation}{section}
\newtheorem{theorem}{Theorem}[section]
\newtheorem*{theorem*}{Theorem}
\newtheorem{lemma}[theorem]{Lemma}
\newtheorem{corollary}[theorem]{Corollary}
\theoremstyle{definition}
\newtheorem{definition}[theorem]{Definition}
\newtheorem{Ex}[theorem]{Example}
\newtheorem{Rem}[theorem]{Remark}
\renewcommand*{\@cite@ofmt}{\bfseries\hbox}
\newcommand{\ox}{\omega^x}
\newcommand{\ou}{\omega^u}
\newcommand{\op}{\omega^p}
\newcommand{\oi}{\omega^i}
\renewcommand{\o}{\omega}
\newcommand{\Pe}{\Phi_\varepsilon}
\newcommand{\vb}{\mathbf{v}}
\newcommand{\vbinf}{\widehat{\mathbf{v}}^\infty}
\newcommand{\Vbinf}{\widehat{\mathbf{V}}^\infty}
\newcommand{\Jinf}{J^{\infty}}
\newcommand{\Jinfe}{J^\infty(\E)}
\newcommand{\Gt}{\widetilde{\G}}
\newcommand{\pbg}{\widetilde{\G}_\infty}
\newcommand{\zinf}{z^{(\infty)}}
\newcommand{\zxinf}{z\restrict{x}^{(\infty)}}
\renewcommand{\Xi}{X^i}
\newcommand{\XiK}{X^i_K}
\newcommand{\uaj}{u^\alpha_j}
\newcommand{\uaJ}{u^\alpha_J}
\newcommand{\UaJ}{U^\alpha_J}
\newcommand{\UaJi}{U^\alpha_{J,i}}
\newcommand{\uaJi}{u^\alpha_{J,i}}
\newcommand{\ub}{\bar{u}}
\newcommand{\Ua}{U^\alpha}
\newcommand{\mui}{\mu^i}
\newcommand{\muiK}{\mu^i_K}
\newcommand{\muiKj}{\mu^i_{K,j}}
\newcommand{\upiK}{\Upsilon^i_K}
\newcommand{\upjJ}{\Upsilon^j_J}
\renewcommand\aa{\"a}
\newcommand\OO{\"O}
\newcommand{\jpx}{ j^\infty\varphi\restrict{x}}
\newcommand\jinf{j^\infty}
\newcommand{\resid}{\restrictbig{\widetilde{\mathbbm{1}}}}
\newcommand{\resone}{\restrictbig{\one}}
\newcommand{\intc}{\int\!\!}
\renewcommand{\sb}{\bar{s}}
\newcommand\taut{\tilde{\tau}}
\newcommand{\Sb}{\bar{S}}
\renewcommand{\th}{^{\text{th}}}
\newcommand{\xb}{\bar{x}}
\newcommand{\p}{\varphi}
\newcommand{\ep}{\varepsilon}
\newcommand{\Nn}{\mathbb{N}_0^n}
\newcommand{\Nz}{\mathbb{N}_0}
\newcommand{\B}{\mathcal{B}}
\newcommand{\Bu}{\mathcal{B}^u}
\newcommand{\Bub}{\mathcal{B}^{\ub}}
\newcommand{\Bt}{\widetilde{\mathcal{B}}}
\newcommand{\A}{\mathcal{A}}
\newcommand{\K}{\mathcal{K}}
\newcommand{\E}{\mathcal{E}}
\newcommand{\D}{\mathcal{D}}
\newcommand{\X}{\mathcal{X}}
\newcommand{\U}{\mathcal{U}}
\newcommand{\G}{\mathcal{G}}
\renewcommand{\S}{\mathcal{S}}
\renewcommand{\H}{\mathcal{H}}
\newcommand{\Rr}{\mathcal{R}}
\newcommand{\R}{\mathbb{R}}
\newcommand{\dis}{\displaystyle}
\newcommand{\mb}{\mathbf{m}}
\newcommand{\ib}{\mathbf{i}}
\newcommand{\resx}{\restrict{x}}
\newcommand{\resz}{\restrict{z}}
\newcommand\one{\mathbbm{1}}
\newcommand{\bpm}{\begin{pmatrix}}
\newcommand{\epm}{\end{pmatrix}}
\newcommand{\bbm}{\begin{bmatrix}}
\newcommand{\ebm}{\end{bmatrix}}
\newcommand{\beq}{\begin{equation}}
\newcommand{\eeq}{\end{equation}}
\newcommand{\bex}{\begin{Ex}}
\newcommand{\eex}{\end{Ex}}
\newcommand{\bre}{\begin{Rem}}
\newcommand{\qv}{^{(q)}}
\newcommand\qb{\bar{q}}
\newcommand\av{^\alpha}
\def\restrict#1{\raise-.3ex\hbox{\scriptsize\ensuremath|}_{#1}}
\def\restrictbig#1{\raise-.3ex\hbox{\large\ensuremath|}_{#1}}
\def\irestrict#1{\raise+.7ex\hbox{\scriptsize\ensuremath|}^{#1}}
\def\comp{\raise 1pt \hbox{$\,\scriptstyle\circ\,$}}
\def\interior{\mathbin{\hbox{\hbox{{\vbox
    {\hrule height.4pt width6pt depth0pt}}}\vrule height6pt width.4pt depth0pt}\,}}
\newcommand{\pder}[2][]{\frac{\partial#1}{\partial#2}}
\title{\normalsize\bf Involutive moving frames}
\author{\normalsize\OO rn Arnaldsson}
\date{} % Activate to display a given date or no date (if empty),
\begin{document}
%\noindent
\maketitle
%\tableofcontents

%\input{imf-MainResults}

\begin{abstract} 
By combining the ideas of Cartan's equivalence method and the
method of the equivariant moving frame for pseudo-groups, we develop an
efficient method for solving equivalence problems arising from 
horizontal Lie pseudo-group actions. The key is a pseudo-group analog of the
classic result that characterizes congruence of submanifolds in Lie groups in
terms of equivalence of the Lie group's Maurer-Cartan forms. This result, when
combined with the fundamental recurrence formulas for the moving frame for
pseudo-groups, will allow for a hybrid equivalence method that will extend and
illuminate its two progenitors. Furthermore, incorporating the recurrence formula from the equivariant moving frame calculus provides great computational simplifications in solving equivalence problems.  

We apply the method to substantial equivalence problems
such as (point) equivalence of second order ordinary differential equations, (point) divergence
equivalence of Lagrangians on the line and equivalence of linear second order
differential operators. These examples will demonstrate the ease with which the combined
equivalence method can solve such problems. 
\end{abstract}

\begin{section}{Introduction}

A geometric structure on a manifold is preserved when going from one coordinate
system to another. \'Elie Cartan, \cite{Car53}, encoded the conditions on a
change of coordinate transformation, for many different geometric structures, as
an equivalence of $G$-structures. An O$(n)$-structure, for example, describes a
Riemannian metric. Cartan's equivalence method computes the local invariants (or
curvatures) of the geometry at hand, and, in the case of an O$(n)$-structure, leads
one to the Levi-Civita connection. 
A bridge from the local
invariants to global invariants is the Chern-Weil theory of characteristic
classes, \cite{chern52}.
% ALEX: I have to be honest, I don't understand how this sentence
% relates to the stuff in between. Also, I think this paragraph is too long and
% the writing could be changed I would instead have something like: 
%
%
%A geometric structure on a manifold is preserved when going from one coordinate
%system to another. \'Elie Cartan, \cite{Car53}, encoded the conditions on a
%change of coordinate transformation, for many different geometric structures, as
%an equivalence of $G$-structures. An O$(n)$-structure, for example, describes a
%Riemannian metric. Cartan's equivalence method computes the local invariants (or
%curvatures) of the geometry at hand, and, in the case of an O$(n)$-structure, leads
%to the Levi-Civita connection. 
%
%
 After the initial successes of Cartan and his disciples, Cartan's equivalence
 method lay dormant for decades until the 1980s when its importance in solving
 a variety of equivalence problems in differential equations (\cite{Bryant95co, Kamran89,
Kamran87}), calculus of variations (\cite{Bryant87,  Gardner85, Hsu89,
Kamran91}), control theory (\cite{Gardner83}) and classical invariant theory
(\cite{olver90, Olver99}) became clear. However, the complexity of the
calculations involved in solving an equivalence problem using the method has
 rendered it a largely theoretical tool, even with the aid of modern computer algebra
systems. 

In a series of papers
\cite{olver05, olver08moving, olver09}, Olver and Pohjanpelto extended the
equivariant moving frame for Lie groups developed in \cite{Fels99} to the
infinite-dimensional realm of a Lie pseudo-group, $\G$, of local diffeomorphisms of a
manifold, $\X$, and provided a practical structure theory of these infinite
dimensional analogs of Lie groups. In local coordinates, $\R^n$, on $\X$, Lie
pseudo-groups are determined by systems of differential equations $\G_q\subset
J^q(\R^n\times\R^n\to\R^n)$, $q\leq\infty$, in jet space. Olver and
Pohjanpelto's approach relied on the fact that the subsets $\G_q$ carry, as
first emphasized by Ehresmann, \cite{Ehresmann53}, a \emph{groupoid structure}.
They then found a remarkable basis for the contact structure of the spaces
$\G_q$ that is \emph{invariant} under the groupoid operation. Naturally, these
contact forms were called the Maurer-Cartan forms of $\G$. Furthermore, in
\cite{olver05}, an algorithmic way of obtaining the \emph{structure equations}
of the Maurer-Cartan forms was found. The equivariant moving frames for Lie groups as well as Lie pseudo-groups have found a plethora of applications, including classical invariant theory (\cite{berchenko00, Olver99}), object recognition and symmetry detection (\cite{boutin00, calabi98}), invariant finite difference numerical schemes (\cite{calabi96, olver01}), invariant Euler-Lagrange equations (\cite{kogan03}) and geometric flows (\cite{kenney09, olver08}). However, applications of the equivariant moving frame for Lie pseudo-groups have mostly relied on the pseudo-group action eventually becoming free (in a specific sense). Indeed, the main result of \cite{olver09} says that if the pseudo-group action becomes free at some jet order, then it remains free. Cartan's equvalence method requires no such restriction and it has been of interest to extend the pseudo-group moving frame beyond free actions, which is one of the contributions of this paper. In fact we combine Cartan's method with the equivariant moving frame
for pseudo-groups, \cite{olver08moving}, and the structure theory for Lie
pseudo-groups in \cite{olver05}, to obtain a powerful hybrid equivalence method
for a large class of equivalence problems. This hybrid method utilizes both the
geometry of Cartan's differential forms and the recurrence formula of the
equivariant moving frame and as such can dramatically reduce the computational load of the type of
low dimensional concrete applications mentioned above.

% While the geometric
%interpretation of the recurrence formula for pseudo-group actions remains
%unknown, this paper is a necessary step in the direction of such an
%interpretation.
%
%Having harmonized Cartan's equivalence method and the equivariant moving frame
%it is natural to think that this emerging theory of pseudo-groups, beginning
%with \cite{olver05}, has exciting potential in the study of general fiber
%bundles and Gel'fand-Fuks cohomology, \cite{morita01}, characteristic cohomology
%of the variational bicomplex, \cite{anderson89, anderson84, anderson96} and
%infinite dimensional symmetry groups of differential equations and variational
%laws, \cite{david85}. 

We shall study the following, quite general, question.
Given a Lie pseudo-group $\G$ of transformations on $\X$ whose action induces an
action on a fiber bundle $\E\to\X$ (that typically depends on the higher jets of
the elements of $\G$), when are two sections of $\E$ locally congruent under an
element $\p\in\G$? We shall say that $\G$ acts \emph{horizontally} on $\E$ in
this situation. The equivalence of two sections of a tensor bundle over a
manifold $M$ under a general change of coordinates is one example of this
problem since a change of variable, $\p$, induces a transformation on the
components of the tensor fields that depends on the 1-jets, $j^1\p$, of $\p$.
Equivalence of Lagrangians under a change of variables is another, see Section
\ref{sec:basicobj}. We should mention that due to reliance on local solvability
of the differential systems describing Lie pseudo-groups, in general we must
restrict to the real-analytic category. This paper solves this problem in a novel and powerful way that generalizes Cartan's solution to the finite dimenional case of Lie groups. Underlying the method of Cartan's Rep\`ere Mobile, \cite{car37}, and the
equivariant moving frame is the fact that two submanifolds, $i:S\to G$ and
$\bar{i}:\Sb\to G$, in a Lie group $G$ are congruent under an element $g\in G$,
\[ g\cdot i(S)=\bar{i}(S), \] if and only if there is a map $\psi:S\to \Sb$
preserving the (left-invariant) Maurer-Cartan forms on $G$ pulled-back to $S$
and $\Sb$. See, e.g., \cite{griffiths74} for a proof. Given a Lie group $G$ acting
effectively (and regularly) on a manifold $M$, this result along with the
equivariant moving frame allows for the complete solution of the congruence
of submanifolds of $M$ under the action of $G$. Equivalence problems
under Lie pseudo-group actions are much more difficult and Cartan developed his
rather complicated equivalence method to handle them. Based on the newly
discovered Maurer-Cartan forms for Lie pseudo-groups we will develop an approach
to congruence problems for Lie pseudo-groups (in the horizontal case) that is
analogous to the finite dimensional case. The key step is the following
generalization of the above result to the infinite dimensional realm (see
Theorem \ref{Gco_gen}),

\begin{theorem*} Let $s$ and $\sb$ be two sections of $\G_p\to\X$ and let $f$ be
a local diffeomorphism on $\X$ such that \beq\label{thmeq} \begin{aligned}
f^*\sb^*\muiK&=s^*\muiK,\quad |K|<p,\quad\text{and}\\ f^*\sb^*\tau&=s^*\tau.
\end{aligned} \eeq Then $f\in\G$ and $R_f\cdot s=\sb$.  \end{theorem*}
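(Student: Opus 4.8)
The plan is to carry out, at the level of the jet groupoid $\G_\infty$, the classical constancy argument behind the finite-dimensional congruence result quoted above, with the recurrence (structure) equations for the Maurer-Cartan forms of \cite{olver05} playing the role that the Maurer-Cartan equation $d\omega=-\tfrac12[\omega,\omega]$ plays for a Lie group. The two conclusions, that $f\in\G$ and that $R_f\cdot s=\sb$, will emerge together, the former being the genuinely new infinite-dimensional content and the latter the analog of ``two submanifolds differ by a fixed translation.'' First I would dispose of the zeroth-order bookkeeping: the hypothesis $f^*\sb^*\tau=s^*\tau$ on the horizontal forms pins down the base behavior, identifying the target map of $s$ with that of $\sb$ precomposed with $f$, so that the groupoid comparisons formed below have matching sources and targets. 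This is the step that forces the a priori arbitrary local diffeomorphism $f$ to interact correctly with the two sections.

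The heart of the argument is a bootstrapping from order $p$ to all orders. I would prolong $s$ and $\sb$ to sections of $\G_\infty$ (permissible because the determining equations of a Lie pseudo-group are formally integrable) and consider $\jinf f$. Arguing by induction on $|K|$, I would apply $d$ to each matching identity $f^*\sb^*\muiK=s^*\muiK$ and invoke the structure equations, which express $d\muiK$ through Maurer-Cartan forms of order at most $|K|+1$ together with the horizontal forms; since $f^*$, $s^*$ and $\sb^*$ all commute with $d$, the matchings already established at orders $\le|K|$ propagate to order $|K|+1$. Crucially, the linear terms in the structure equations encode the determining equations of $\G$, so the same computation that propagates the matching also forces $\jinf f$ to satisfy those equations, which is precisely the assertion $f\in\G$. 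Here involutivity does its work: beyond the involutive order no genuinely new determining equations appear, so the induction closes and the finite-order hypotheses $|K|<p$ suffice to control every order.

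With the Darboux data matched to all orders, I would finish with the groupoid constancy argument. The right-invariance of the Maurer-Cartan forms lets me form, pointwise over $\X$ and well defined by the zeroth-order step, the groupoid element $\Psi(x)$ comparing $s(x)$ with $\sb(f(x))$; its Darboux derivative is computed by the groupoid Leibniz rule, and the matching hypotheses give $\Psi^*\muiK=0$ for all $K$. The Maurer-Cartan structure equations then show, via Frobenius, that $\Psi$ is locally constant, so on a connected neighborhood it equals a single fixed element of $\G$. Reading off the action of that element recovers both $R_f\cdot s=\sb$ and, once more, $f\in\G$.

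I expect the main obstacle to be the order-by-order step: verifying that the structure and recurrence equations genuinely close the induction, so that propagating the matching never requires a datum of order $\ge p$ that is not already forced, while simultaneously delivering the membership $\jinf f\in\G_\infty$. This is exactly the point at which the classical Lie-group proof has nothing to prove, since there membership is automatic, and where the involutivity hypothesis must be used in an essential way.
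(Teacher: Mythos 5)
There are two genuine gaps, and they sit at the two load-bearing points of your outline. First, the order-by-order bootstrap does not work. Sections of $\G_p$ are not holonomic --- the paper stresses that contact forms do \emph{not} vanish on their images --- so ``prolonging'' $s$ and $\sb$ to $\G_\infty$ is a non-canonical choice, and matching of Maurer--Cartan forms at higher orders is then something to be \emph{arranged} for some choice of lifts, not a hypothesis to differentiate. Worse, even granting lifts, applying $d$ to $f^*\sb^*\muiK=s^*\muiK$ and substituting the structure equations (\ref{structureeq}) yields, after the inductively matched quadratic terms cancel, only $\sum_j s^*\o^j\wedge\bigl(f^*\sb^*\muiKj-s^*\muiKj\bigr)=0$; by Cartan's lemma the mismatch equals $A^i_{K,jl}\,s^*\o^l$ with coefficients symmetric in $j,l$, and such mismatches range over the prolonged symbol of the determining system --- they are generically nonzero (already for $\D$ itself, since the next-order components of a lift are freely choosable and enter the pullbacks of the $\muiKj$). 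So the matching does not propagate, and involutivity cannot close an induction that fails at each step; the paper never invokes involutivity in this proof, only regularity and formal integrability, and those only to guarantee local solvability. The bootstrap is also unnecessary: the theorem is a finite-order statement, and the $\muiK$ with $|K|<p$ already span the entire contact codistribution of $\G_p$.

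Second, your closing ``constancy'' step conflates the group and groupoid pictures. In a Lie group a map with vanishing Darboux derivative is locally constant, but on $\G_p$ the condition $\Psi^*\muiK=0$, $|K|<p$, says the comparison section is \emph{holonomic}, i.e. equal to $j^pf\resx$ --- an object that still varies with $x$; no Frobenius argument is needed, just the spanning property of the $\muiK$, the zeroth-order identification with $f$, and the fact that the section takes values in $\G_p$ (which is what gives $f\in\G$, not ``linear terms in the structure equations''). More importantly, the step you dismiss as ``the groupoid Leibniz rule'' is where all the work lies and where the hypotheses are actually consumed: the paper differentiates $a(x)=\sb(f(x))^{-1}\cdot s(x)$ via Lemma \ref{m}, uses local solvability to realize the jets $s(x)^{-1}$ and $\sb(f(x))^{-1}$ as jets of genuine elements of $\G$ so that Lemma \ref{same} applies, eliminates the diagonal term with Lemma \ref{gg}, and --- crucially --- uses that $L_\psi^*\muiK$ is again a right-invariant contact form, so that the hypothesis, which matches only the basis forms $\muiK$, also matches their left-translates. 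Without that observation the naive Leibniz computation does not close. As written, your proposal asserts the conclusion of this computation while supplying an induction that fails and a constancy principle that does not hold in the groupoid setting.
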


Here, $\muiK$ are the Maurer-Cartan forms of $\G$, $R_f\cdot s$ denotes the
groupoid product $s\cdot j^pf^{-1}$ in $\G_p$ and $\tau$ is the target map in
the groupoid $\G_p$. (The product $s\cdot j^pf^{-1}$ is only defined at points
where the source of $s$ and target of $f^{-1}$ agree.)

Before moving on to the consequences of the above theorem we must establish some
notation from the theory of the equivariant moving frame for pseudo-groups.
For the two infinite bundles over the base manifold $\X$, $\G_\infty\to\X$ and
$\Jinf(\E)\to\X$, denote the elements of $\G_\infty$ by $\jinf\p\resx$ and
those of $\Jinf(\E)$ by $\zinf\resx$. The pull-back bundle
$\Gt_\infty\to\Jinf(\E)$, coordinatized by $(\jinf\p\resx, \zinf\resx)$ has a
double fibriation,

\[ \begin{tikzpicture} \node (A0) at (0,0) {$\Gt_\infty$}; \node (A1) at
(240:2cm) {$\Jinf(\E)$}; \node (A2) at (300:2cm) {$\Jinf(\E)$,};
\draw[->,font=\scriptsize] (A0) edge node[left] {$\tilde{\sigma}$} (A1) (A0)
edge node[right] {$\tilde{\tau}$} (A2);

\end{tikzpicture} \] where $\tilde{\sigma}(\jinf\p\resx, \zinf\resx)=\zinf\resx$
and $\tilde{\tau}(\jinf\p\resx, \zinf\resx)=\jinf\p\resx\cdot \zinf\resx$ is the
prolonged action of $\G$ on $\Jinf(\E)$. The elements of $\G$ act on the
pull-back bundle $\Gt_\infty$ by \emph{right-regularization}; for $\psi\in\G$ we
define \[ R_\psi\cdot
(\jinf\p\resx,\zinf\resx)=(\jinf\p\resx\cdot\jinf\psi^{-1}\restrict{\psi(x)},
\jinf\psi\resx\cdot\zinf\resx).  \] The map $\tilde{\tau}$ is invariant under
this action, as can easily be checked, and the components of $\tilde{\tau}$,
which are complicated expressions in the jet coordinates on $\G_\infty$
and $\Jinf(\E)$, are called \emph{lifted invariants}. A common level set of (an
arbitrary number of) lifted invariants is called a \emph{partial moving frame}.
Note that a partial moving frame is an invariant set under the regularized
action. 

For local coordinates $(x,u)$ on $\E$, where $x\in\R^n$ and $u\in\R^m$, we write
the jets $\zinf\resx\in\Jinf(\E)$ as \[ \zinf\resx=(x,u,\ldots, \uaJ,
\ldots),\quad 1\leq\alpha\leq m, J\in\Nz^n, \] and the components of
$\tilde{\tau}$ with capitalized letters, \[ \tilde{\tau}(\jinf\p\resx,
\zinf\resx)=(X,U,\ldots, \UaJ, \ldots).  \] Now, the congruence problem of
sections, $\jinf u$ and $\jinf\ub$ of $\Jinf(\E)$ under the action of $\G$ can
be rewritten as the problem of finding two congruent sections of $\G_p\to\X$
which are confined to the common level set $X=U=0$ (Corollary \ref{zfundcor}).
Theorem \ref{Gco_gen} now reduces this problem further to solving for $f$ in
(\ref{thmeq}). It is here that Cartan's $G$-structures emerge (see Section
\ref{partmaur}).

But since we simply recover Cartan's $G$-structures, what exactly has been
gained by these maneuvers? First, the above provides an algorithmic way of
writing down a $G$-structure for an arbitrary equivalence problem (for
horizontal actions). Second, and much more importantly, this approach invites
the \emph{recurrence formula} from the equivariant moving frame,
\cite{olver08moving}, into the picture as well as the structure equations for
the Maurer-Cartan forms found in \cite{olver05}. We have reduced the congruence
problem to solving for $f$ in (\ref{thmeq}). This is an exterior differential
system whose solution will depend on the analysis of the exterior derivatives of
the Maurer-Cartan forms, $d\muiK$, \emph{restricted to a partial moving frame},
and it is here that the recurrence formula and structure equations emerge.

The structure equations for $d\muiK$ are the restrictions to $\G_p$ of the
equations, found in \cite{olver05},\beq\label{intmuik} d\muiK=\sum_{1\leq j\leq
n}\o^j\wedge\muiKj+\sum_{\substack{L+M=K \\ |M|\geq1}}\binom{K}{L}\sum_{1\leq
j\leq n}\mu^i_{L,j}\wedge\mu^j_M.  \eeq Furthermore, the recurrence formula
tells us that 
\[ 
d\UaJ
=
\UaJi\oi+\sum\lambda\left(\vb^\infty(\uaj)\right)\muiK
=
\UaJi\oi+\sum\lambda\left(\frac{\partial
\UaJ}{\partial\XiK}\resone\right)\muiK.  
\] 
(In the above equations, $\oi$ are
certain invariant horizontal forms (i.e., 1-forms on the base manifolds with
coefficients that are real-analytic functions on $\Gt_\infty$), $\lambda$ is
an operator designed to \emph{invariantize} objects on $\Jinf(\E)$, see Section
\ref{sec:lift} and $\vb^\infty$ is the infinitesimal generator of $\G$ on $\Jinf(\E)$.) For example, if $U=0$ on our partial moving frame, then the
recurrence formulas imply that \beq\label{intua}
0=d\Ua=\Ua_i\oi+\sum\lambda\left(\frac{\partial
\Ua}{\partial\XiK}\resone\right)\muiK.  \eeq Hence, when combined with the
structure equations (\ref{intmuik}), we obtain the complete structure of the
Maurer-Cartan forms on the partial moving frame. 

As mentioned previously the Maurer-Cartan forms $\muiK$, when restricted to
partial moving frames, form the classic notion of a $G$-structure for the
equivalence problem. For example, if $\G$ is determined by first order equations
$\G_1$ the invariant horizontal forms $\o^1, \ldots, \o^n$ provide the
$G$-structure. Cartan's \emph{structure equations} for this $G$-structure are
the expressions for $d\oi$ (see \cite{Gardner89, olver95} for expositions
of the method). But in our formulation we obtain these very easily as follows.
First of all, by the general structure equations for Lie pseudo-groups
(\ref{intmuik}) we have \beq\label{intdo} d\oi=\o^j\wedge\mui_j \eeq Restricting
these equations to $\G_1$ will give certain linear dependencies among the first
order Maurer-Cartan forms $\mui_j$. Futhermore, (\ref{intua}) gives further
dependencies among the Maurer-Cartan forms. Choosing principal and parametric
derivatives and plugging them into (\ref{intdo}) gives expressions of the form
\[ d\oi=\cdots+\Ua_k\o^k\wedge\o^j, \] where $\cdots$ are forms involving first
order Maurer-Cartan forms $\mui_j$. The first order lifted invariants $\Ua_k$
that emerge at this stage as the coefficients of the purely horizontal two-forms
are Cartan's \emph{torsion coefficients} and we normalize them to convenient
constants to obtain a higher order partial moving frame. Again, the recurrence
formula tells us that once we normalize $\Ua_k$ we have introduced some further
dependencies among the invariant forms, 
\[
0=d\Ua_k=\Ua_{kl}\o^l+\sum_{|K|\leq2}\lambda\left(\frac{\partial
\Ua_k}{\partial\XiK}\resone\right)\muiK.  
\] 
The key simplifying
feature of the recurrence formula is that the coefficients of $\muiK$ in
(\ref{intua}) are being evaluated at the identity section and so are rather
simple, easily computable expressions (and even more so by software such as
\textsc{Mathematica}). In Section \ref{examples1} we solve some
difficult equivalence problems where these simplifications are evident.

As in Cartan's equivalence method, we stop this process of exterior
differentiation of the invariant forms on the partial moving frames, and
normalization of lifted invariants, when one of two things happen. Either when
the top order forms satisfy \emph{Cartan's test for involution} or when we
manage to solve for \emph{all} top order forms in the recurrence formulas. In
either case we say we have obtained an \emph{involutive moving frame}. Checking for involution is made simple by already knowing all the structure equations (\ref{intmuik}). For a proof that the process terminates, see \cite{IMF}.

This paper is structured as follows. We begin by giving a worked example that demonstrates the workings of our equivalence method so the reader can immediately see its many benefits and will hopefully be motivated to study the details of the paper. The following Section \ref{pseudogroups} gives an
overview of the groupoid approach to Lie pseudo-groups and sets
the stage for our method of involutive moving frames in the special case of
horizontal actions. Section \ref{core} forms the core of the paper. It first gives a quick review of the equivariant moving frame and 
gives a novel definition of a partial moving frame, \cite{olver11},
in the ``groupoid spirit.'' This definition allows us to apply our equivalence
method to so-called singular jets that have, so far, been beyond the reach of
the equivariant moving frame (while within that of Cartan's method). Section \ref{SGp} contains the fundamental proof
of equivalence of sections in the groupoids associated with a Lie pseudo-group.
The following section shows how this theorem provides some classical
$G$-structures of Cartan, and connects Cartan's equivalence method with (partial)
moving frames. Finally, in
Section \ref{examples1} we solve some significant equivalence problems to
demonstrate the power of this method of involutive moving frames. These include (point)
equivalence of second order ordinary differential equations and equivalence of second order
differential operators. 
% ALEX: are these done for non-horizontal actions? I think that when you're
% submitting to Annals it's probably not a great idea to mention in the intro
% ``more substantial'' results that are going to come in a future paper. 

\end{section}

\begin{section}{Divergence equivalence of Lagrangians}

We begin this paper by giving a worked example of a difficult equivalence problem, that of Lagrangians $\dis \int L(x,u,u_x)dx$ under point transformations modulo total divergence (see Example \ref{divel} for the set-up). The pseudo-group of point transformations (writing $p=u_x$), $(x,u,p)\mapsto (X,U,P)$, $\G$, has defining equations
\beq\label{eq:point}
X_p=U_p=0,\quad U_x=P(X_x+pX_u)-pU_u.
\eeq
We then extend the point-transformation $(x,u,p)\mapsto (X,U,P)$ to a two dimensional space of variables representing $L_{pp}$ and $\tilde{E}(L)=L_u-L_{px}-pL_{pu}$. Call these variables $w$ and $z$, respectively. The $w$ and $z$ transform according to (see Example \ref{divel})
\begin{align*}
{}&w\mapsto W=\frac{w}{P_p^2\cdot(pX_u+X_x)},\\
&z\mapsto Z=\frac{z}{P_p\cdot(pX_u+X_x)^2}+\frac{pP_u+P_x}{pX_u+X_x}\cdot W,
\end{align*}
and two sections of the trivial bundle $(x,u,p,z,w)\mapsto(x,u,p)$ are equivalent under this extended action if and only if the associated Lagrangians are divergence equivalent. 

Differentiating the last equation in (\ref{eq:point}) w.r.t. $p$ gives the integrability condition
\[
P_p=\frac{U_u-P X_u}{p X_u+X_x}.
\]
This makes $\G_1$ formally integrable. Prolonging these equations once and linearizing gives the following linear dependencies among the Maurer-Cartan forms, up to second order (after setting $X=U=P=0$).
\beq\label{elstr}
\begin{aligned}
{}&\mu^p_p=\mu^u_u-\mu^x_x,~~\mu^u_x=-\o^p,~~\mu^p_{pp}= -2\mu^x_u,~~\mu^p_{pu}=\mu^u_{uu}-\mu^x_{ux},\\
{}&\mu^p_{px}=-\mu^x_{xx}+\mu^p_u,~~\mu^u_{ux}=\mu^p_u,~~\mu^u_{xx}=\mu^p_x,
\end{aligned}
\eeq
as well as $\mu^x_{pi}=\mu^u_{pi}=0$, $i\in\{x,u,p\}$. We normalize $W$ to 1 and $Z$ to 0, and the recurrence formula gives, using (\ref{elstr}), that
\begin{align*}
{}&0=dW=W_i\oi-2\mu^u_u+\mu^x_x,\\
&0=dZ=Z_i\oi+\mu^p_x.
\end{align*}
Computing all $d\oi$ gives
\beq\label{eldo}
\begin{aligned}
{}&d\o^x=\o^x\wedge\mu^x_x+\o^u\wedge\mu^x_u=\o^x\wedge(2\mu^u_u-W_i\oi)+\o^u\wedge\mu^x_u,\\
&d\o^u=-\o^x\wedge\o^p+\o^u\wedge\mu^u_u,\\
&d\o^p=\o^x\wedge(-Z_i\oi)+\o^u\wedge\mu^p_u+\o^p\wedge(W_i\oi-\mu^u_u).
\end{aligned}
\eeq
We can see that the purely horizontal parts involve the $W_i$ and $Z_i$ and so we compute the $d_G$ of these using the recurrence formula. 

\begin{Rem}\label{rem:zp}
Notice that
\[
z_p=-w_x-pw_u
\]
and so the jets of $w$ and $z$ are not entirely functionally independent. We therefore always replace a $z$ jet that involves a $p$ derivative with an expression involving only $w$ jets.
\end{Rem}

We find that we can normalize $X_u$ from $W_P=0$ to obtain
\[
0=W_{Pi}\oi+3\mu^x_u,\quad\text{and}\quad X_u=-\frac{U_u^2w_p}{3w^2}.
\]
We also find (notice that we skip $Z_P$, cf. Remark \ref{rem:zp})
\begin{align*}
d_GW_X&=\mu^x_{xx}-2\mu^p_u-2W_X\mu^u_u,\\
d_GW_U&=-2\mu^u_{uu}+\mu^x_{ux}-W_X\mu^x_u,\\
d_GZ_U&=\mu^p_{ux}+W_X\mu^p_u,\\
d_GZ_X&=\mu^p_{xx}.
\end{align*}
We can normalize $X_{xx}$, $U_{uu}$, $P_{ux}$ and $P_{xx}$ from these equations and we go back to our structure equations to find that
\beq\label{eldo2}
\begin{aligned}
{}&d\o^x=2\o^x\wedge\mu^u_u-\frac{1}{3}\o^u\wedge(W_{Pi}\oi),\\
&d\o^u=-\o^x\wedge\o^p+\o^u\wedge\mu^u_u,\\
&d\o^p=\o^u\wedge\mu^p_u-\o^p\wedge\mu^u_u.
\end{aligned}
\eeq
Now we must check the lifted invariants $W_{Pi}$, and we find that
\begin{align*}
d_GW_{PU}&=3\mu^x_{uu}-W_{PP}\mu^p_u,\\
d_GW_{PX}&=3 \mu^x_{ux}.
\end{align*}
while
\[
W_{PP}=\frac{U_u^2 \left(3 w w_{pp}-4
   w_p^2\right)}{3 w^4}.
\]
The first order group parameter $U_u$ can be normalized from $W_{PP}$ if and only if 
\[
3 w w_{pp}-4w_p^2\neq0 \quad\text{or, equivalently,}~3L_{pp}L_{pppp}-4L_{ppp}^2\neq0)
\]
and so the equivalence problem branches at this juncture. In case $3 w w_{pp}-4w_p^2=0$ we must check for involution of our $G$-structure provided by the $\oi$. The structure equations are
\beq\label{eldo3}
\begin{aligned}
d\o^x&=2\o^x\wedge\mu^u_u,\\
d\o^u&=-\o^x\wedge\o^p+\o^u\wedge\mu^u_u,\\
d\o^p&=\o^u\wedge\mu^p_u-\o^p\wedge\mu^u_u.
\end{aligned}
\eeq
and the first reduced Cartan character is the maximal rank of the set of one-forms
\[
\gamma_1\{\left(a\pder{\o^x}+b\pder{\o^u}+c\pder{\o^p}\right)\interior d\oi~|~i\in\{x,u,p\}\},
\]
where $\gamma_1$ projects onto the space of first order Maurer-Cartan forms. This set is
\[
\{2a\mu^u_u,~~ b\mu^u_u,~~ b\mu^p_u-c\mu^u_u\}
\]
and maximizing its rank is equivalent to maximize the rank of the matrix
\[
\bbm 2a & 0
\\
b & 0 \\
-c & b 
\ebm,
\]
which obviously gives the first reduced Cartan character $s^{(1)}_1=2$ and since this makes the matrix full rank the second and third reduced Cartan characters are zero, $s^{(1)}_2=s^{(1)}_3=0$. 

The only second order group parameter we have not managed to normalize is $P_{uu}$ and so Cartan's test for involution asks whether
\[
1=\#\{P_{uu}\}=\sum s^{(1)}_i\cdot i=2
\]
which is untrue and the the $G$-structure is not involutive and we must prolong to the next order. In our framework this just means that the collection of one-forms on our partial moving frame
\[
\{\o^x, \o^u, \o^p, \mu^u_u, \mu^p_u\}
\]
are invariant under equivalence maps. To obtain the structure equations for this system we must join $d\mu^u_u$ and $d\mu^p_u$ to (\ref{eldo3}). The Maurer-Cartan structure equations for Lie pseudo-groups already tell us what these are and we must simply restrict them to the partial moving frame. We find
\beq\label{eldmu}
\begin{aligned}
{}&d\mu^u_u=\oi\wedge\mu^u_{ui}+\mu^u_x\wedge\mu^x_u,\\
&d\mu^p_u=\oi\wedge\mu^p_{ui}+\mu^p_x\wedge\mu^x_u+\mu^p_u\wedge\mu^u_u+\mu^p_p\wedge\mu^p_u,
\end{aligned}
\eeq
and after plugging in all of our information from above, we have
\beq\label{eldmu2}
\begin{aligned}
{}&d\mu^u_u=\o^x\wedge\mu^p_u+\frac{1}{2}\o^u\wedge\left((W_{Ui}-\frac{1}{3}W_{PXi})\oi\right),\\
&d\mu^p_u=-\o^x\wedge(Z_{Ui}\oi)+\o^u\wedge\mu^p_{uu}+\o^p\wedge\left((\frac{1}{2}W_{Ui}+\frac{1}{3}W_{PXi})\oi\right).
\end{aligned}
\eeq
We must therefore compute $d_G$ of $Z_{Ui}$, $W_{PXi}$ and $W_{Ui}$.  We find
\beq
\begin{aligned}
d_GZ_{UU}&=\mu^p_{uux}-\mu^p_{uu},\\
d_GZ_{UX}&=\mu^p_{uxx}-Z_{PX}\mu^p_u+Z_{Ui}\oi-\frac{1}{3}W_{Pi}\oi\\
d_GW_{UX}&=\mu^x_{uxx}-2\mu^p_{uu}-3W_{UX}\mu^u_u,\\
d_GW_{UU}&=-2\mu^u_{uuu}+\mu^x_{uux},\\
d_GW_{PUX}&= 3 \mu^x_{uux}+W_{PXX}(W_{Pi}\oi)\\
d_GW_{PXX}&= 3 \mu^x_{uxx}-3W_{PXX}\mu^u_u,
\end{aligned}
\eeq
while $W_{PPX}=0$. These equations indicate that the first five of them can be normalized for a third order group parameter, but the third and last equations indicate that
\[
d_G(W_{PXX}-3W_{UX})=6\mu^p_{uu}+\dots
\]
and therefore $P_{uu}$ may be normalized from $W_{PXX}-3W_{UX}$. This is where our framework has an important computational benefit.

\begin{Rem}\label{rem:sim}
$W_{PXX}$ and $W_{UX}$ both depend on third order pseudo-group parameters and both are \underline{affine} in these. Meanwhile $W_{PXX}-3W_{UX}$ only depends on second order parameters. Therefore, when computing $W_{PXX}-3W_{UX}$ we can (since they must cancel anyway) eliminate every third order parameters that we come across. This may not seem like much of an advantage in this relatively simple example, but this observation is crucial in computationally heavier equivalence problems where our computer algebra software is asked to simplify enormous expressions; it help tremendously to get rid of all top order group parameters before simplifying.
\end{Rem}

%At this point the expressions for the lifted invariants have become quite unwieldy. As an example of the type of monstrosities we are producing at this point, the above normalization is
%\begin{align*}
%P_{uu}\to &\frac{1}{18
%   w^4 U_u}(-3 p^2 w^4 w_{puu}+9
%   p^2 w^3 w_u w_{pu}+6 p^2 w^3 w_p w_{uu}-3
%   w^3 P_u U_u w_{px}-3 p w^3 P_u U_u w_{pu}\\
%   &-6
%   p w^4 w_{pux}+9 p w^4 w_{uu}+9 p w^3 w_x
%   w_{pu}+9 p w^3 w_u w_{px}+12 p w^3 w_p
%   w_{ux}+6 w^3 z w_{pu}\\
%   &+5 p w^2 z w_p
%   w_{pu}-3 w^4 w_{pxx}+9 w^3 w_x w_{px}+6 w^3
%   w_p w_{xx}+5 w^2 z w_p w_{px}-8 w z^2 w_p
%   w_{pp}\\
%   &+9 w^4 w_{ux}
%   -12 p^2 w^2 w_p
%   w_u^2+6 w^2 w_p P_u U_u w_x+6 p w^2 w_p
%   P_u U_u w_u+2 w z w_p^2 P_u U_u\\
%   &+12 w^3
%   w_p z_u-9 p w^3 w_u^2-24 p w^2 w_p w_u
%   w_x-15 w^2 z w_p w_u+2 p w^2 w_p^2 z_u-8
%   p w z w_p^2 w_u\\
%   &+2 w^2 w_p^2 z_x-12 w^2
%   w_p w_x^2-8 w z w_p^2 w_x
%   +10 z^2
%   w_p^3+9 w^3 P_u U_u w_u-9 w^3 w_u w_x).
%\end{align*}
Now, since we have normalized \emph{all} second order group parameters we have reduced the branch of  the equivalence problem where
\[
3 w w_{pp}-4w_p^2=0
\]
to an equivalence problem for the coframe
\[
\{\o^x, \o^u, \o^p, \mu^u_u, \mu^p_u\} 
\]
on the partial moving frame. The structure equations are
\beq\label{eldmu3}
\begin{aligned}
d\o^x&=2\o^x\wedge\mu^u_u,\\
d\o^u&=-\o^x\wedge\o^p+\o^u\wedge\mu^u_u,\\
d\o^p&=\o^u\wedge\mu^p_u-\o^p\wedge\mu^u_u,\\
d\mu^u_u&=\o^x\wedge\mu^p_u,\\
d\mu^p_u&=\o^u\wedge\mu^p_{uu}+2\mu^p_u\wedge\mu^u_u.
\end{aligned}
\eeq
From the normalization of $W_{PXX}-3W_{UX}$ and the recurrence formula we find that
\[
\mu^p_{uu}=\frac{1}{6}\left(W_{PXXi}-3W_{UXi}\right)\oi
\]
and so the final structure equations are
\beq\label{eldmu4}
\begin{aligned}
d\o^x&=2\o^x\wedge\mu^u_u,\\
d\o^u&=-\o^x\wedge\o^p+\o^u\wedge\mu^u_u,\\
d\o^p&=\o^u\wedge\mu^p_u-\o^p\wedge\mu^u_u,\\
d\mu^u_u&=\o^x\wedge\mu^p_u,\\
d\mu^p_u&=\frac{1}{6}\o^u\wedge\left(W_{PXXi}-3W_{UXi}\right)\oi+2\mu^p_u\wedge\mu^u_u.
\end{aligned}
\eeq
Now, the coefficient $W_{PPXX}-3W_{PUX}$ vanishes on our partial moving frame and so we only have $W_{PXXX}-3W_{UXX}$ left to compute. We first check the $d_G$ of this expression using the recurrence formula. We have
\[
d_G\left(W_{PXXX}-3W_{UXX}\right)=6\mu^p_{uux}-5W_{PXXX}\mu^u_u.
\]
We have already normalized $P_{uxx}$ from $Z_{UU}$ above, where we found
\[
d_GZ_{UU}=\mu^p_{uux}-\mu^p_{uu}
\]
and so to compute $W_{PXXX}-3W_{UXX}$ we may compute $W_{PXXX}-3W_{UXX}-6Z_{UU}$ where we can take advantage of Remark \ref{rem:sim} as the expression $W_{PXXX}-3W_{UXX}-6Z_{UU}$ will depend on (at most) first order pseudo-group parameters. This computation is not feasible to do by hand and so we leave it up to \textsc{Mathematica}. We find that on our partial moving frame $W_{PXXX}-3W_{UXX}$ has the form
\[
\frac{J}{3 w^3U_u^5}
\]
where $J$ is an enormous expression, which in this fonsize would cover a full page.

For the Lagrangian in $\dis \int\frac{1}{2}p^2dx$, $J$ vanishes (as well as $3ww_{pp}-4w_p^2$ and so we are in the above branch) and the contact transformations preserving this Lagrangian up to divergence form a five dimensional (local) Lie group whose Maurer-Cartan forms have structure equations
\beq\label{eldmu5}
\begin{aligned}
d\o^x&=2\o^x\wedge\mu^u_u,\\
d\o^u&=-\o^x\wedge\o^p+\o^u\wedge\mu^u_u,\\
d\o^p&=\o^u\wedge\mu^p_u-\o^p\wedge\mu^u_u,\\
d\mu^u_u&=\o^x\wedge\mu^p_u,\\
d\mu^p_u&=2\mu^p_u\wedge\mu^u_u.
\end{aligned}
\eeq

Better yet, all Lagrangians in this branch for which $J$ vanishes are equivalent and there is a five dimensional space of equivalence maps (in $\G$) between any two of them. We shall not continue this equivalence problem as its full analysis would take up too much space, but the above has demonstrated all of the key features of the combined equivalence problem established in this paper. 

Having sung the praises of the above routine, we must admit one shortcoming. The equivalence maps that our method computed must all come from the original pseudo-group $\G$ which acts on the space $\E$ of $(x,u,p,w,z)$ and the jet spaces $J^k(\E)$ by prolongation. This means that when solving equivalence problems for differential equations the method only computes the \emph{external symmetries}, while Cartan's method would also compute the \emph{internal symmetries}. (See \cite{anderson93} for a complete analysis of these matters.) Whether or not the moving frame can be utilized to compute internal symmetries of differential equations is an important open problem.

%Indeed, the paper \cite{kamran89eq} solves the above equivalence problem by Cartan's method and finds a \emph{six} dimensional coframe, of which $\{\o^x, \o^u, \o^p, \mu^u_u, \mu^p_u\}$ is only a subset. Whether or not the moving frame can be utilized to compute internal symmetries is an important open problem.
\end{section}

\begin{section}{Pseudo-groups}\label{pseudogroups}
This section gives a rapid overview of the groupoid approach to Lie pseudo-groups, their structure equations and the recurrence formula for lifted invariants, developed in the series of papers \cite{olver05, olver08moving, olver09}. We only prove the recurrence formula in the special case of horizontal actions, but the general case is identical, see \cite{olver08moving}. Since the theory of Lie pseudo-groups requires ``sufficiently regular'', formally integrable differential equations to be locally solvable, and since this is only true in general for analytic systems (by the Cartan-K\aa hler theorem), we shall assume real analyticity of all systems in this paper. Regularity is a thorny issue and a time-honored tradition in the formal theory of differential equations is to ``assume whatever regularity you need at each time''. Here, regularity is understood in the following specific sense. To describe this definition, let $\E\overset{\pi}{\to}\X$ be a bundle and let $\Rr_q$ be a $q\th$ order differential equation in the jet space $J^q(\E)$. Denote the canonical bundle maps between jet spaces by 
\[
J^p(\E)\overset{\pi^p_q}{\mathsmaller{\to}} J^q(\E),\quad \text{for}~p\geq q.
\]  
Further, denote the $t\th$ prolongation of $\Rr_q$ by $\Rr_{q,t}\subset J^{q+t}(\E)$ and set
\[
\Rr^{(s)}_{q,t}:=\pi^{q+t}_{q+t-s}(\Rr_{q,t}),\quad\text{for}~q+t\geq s.
\]
\begin{definition}\label{regularity}
The system $\Rr_q$ is \emph{regular}, with \emph{regularity order} $p^*$, if, for all $s,t$, $\Rr^{(s)}_{q,t}$ is a submanifold of $J^{q+t-s}(\E)$, and for all $s,t$ such that $q+t-s\geq p^*$ the reduced Cartan characters of $\Rr^{(s)}_{q,t}$ are constant along $\Rr^{(s)}_{q,t}$.
\end{definition}

The following example illuminates some of the issues this definition is meant to tackle.

\bex\label{ex:AbelEx}
Consider the first order, non-regular, formally integrable, differential system $\Rr_1$ for maps 
\[
(x,y,u)\mapsto(X(x,y,u), Y(x,y,u), U(x,y,u)), 
\]
for real valued functions $X,Y$ and $U$, determined by the equations
\[
X=x,\quad Y=y,\quad U=u+xU_x+yU_y.
\]
The symbol of $\Rr_1$, and all its prolongations, changes rather drastically close to $x=0$ and $y=0$ and on their intersection, the $u$-axis of the base manifold $\R^3$, it assumes a more degenerate form still as the determining equations decrease in order there. At all points in fibers above the $u$-axis the symbol module is all of $\R[\xi^1, \xi^2, \xi^3]\otimes\R^3$ and hence trivially involutive. However, the lack of regularity at these points prevents an application of the Cartan-K\aa hler theorem at these points (at all orders). Away from the $u$-axis the system is regular and, due to lack of integrability conditions, locally solvable.

Now adjoin the following second order equations to $\Rr_1$, to obtain the system $\Rr_2$,
\[
X_{ij}=Y_{ij}=U_{ij}=0,\quad\text{for all}~i,j\in\{x,y,u\}.
\]
It is easily seen that $\Rr_2$ is regular with regularity order 2 and hence locally solvable.
\eex

\begin{subsection}{Basic objects}\label{sec:basicobj}

Consider the jet bundle $\Jinf(\X\times\X\overset{\sigma}{\to}\X)$ for sections of the trivial bundle $\X\times\X\overset{\sigma}{\to}\X$ where $\X$ is an $n$-dimensional manifold. Let $\D(\X)$ denote the collection of all local diffeomorphisms of $\X$ and let $\D_\infty(\X)\subset\Jinf(\X\times\X\to\X)$ be the subbundle of all infinite jets of these. We shall drop the mention of $\X$ when it is clear what the base manifold is and simply write $\D$ and $\D_\infty$ instead of $\D(\X)$ and $\D_\infty(\X)$. Similarly, we denote by $\D_p(\X)$ the set of $p$-jets of transformations from $\D(\X)$. For local coordinates $x$ on $\X$ we have the induced jet coordinates $(x,X,\ldots, \XiK,\ldots)$ on $\D_\infty$ (and by truncation on $\D_p$). That is, for a local diffeomorphism $\p$, we have $\jpx=(x,X,\ldots, \XiK,\ldots)$, where $$\XiK=\frac{\partial^{|K|} \p}{\partial x^{K}}(x),\quad K\in\Nn.$$ 
 
The collection $\D$ forms a \emph{pseudo-group}, since if $\p\in\D$ then $\p^{-1}\in\D$ and the composition of two local diffeomorphisms is again a diffeomorphism \emph{whenever the composition can be defined}. As emphasized by Ehresmann, \cite{Ehresmann53}, each set $\D_p\subset J^p(\X\times\X)$ carries a \emph{groupoid structure}; we define the source and target of a $p$-jet $j^p\p\resx=(x,X,\ldots,X^i_L,\ldots)\in\D_p$ as
\[
\sigma(j^p\p\resx)=x\quad\text{and}\quad \tau(j^p\p\resx)=X,
\]
respectively. The groupoid multiplication of $j^p\p\resx$ and $j^p\psi\restrict{X}$, where $\tau(j^p\p\resx)=\sigma(j^p\psi\restrict{X})$, is defined as
\[
j^p(g\comp f)\resx,
\]
where $f$ and $g$ are functions in $\D$ having the $p$-jets $j^p f\resx=j^p\p\resx$ and $j^pg\restrict{X}=j^p\psi\restrict{X}$. This definition does not depend on the choice of $f$ and $g$ as can be seen from the chain rule. We write the groupoid operation as $j^p\psi\restrict{X}\cdot j^p\p\resx$. The source and target maps provide each $\D_p$ with a double fibration,

\[
\begin{tikzpicture}
  \node (A0) at (0,0) {$\D_p$};
  \node (A1) at (240:2cm) {$\X$};
  \node (A2) at (300:2cm) {$\X$.};
  \draw[->,font=\scriptsize]
  (A0) edge node[left] {$\sigma$} (A1)
  (A0) edge node[right] {$\tau$} (A2);

\end{tikzpicture}
\]
 
\begin{definition}\label{def:pseudo}
A \emph{Lie pseudo-group}, $\G$, of local transformations of $\X$ is a sub-pseudo-group of $\D$ that is determined, in each coordinate chart, by a set of formally integrable differential equations called the \emph{defining equations} that are regular in the sense of Definition \ref{regularity}. 
\end{definition}

As above, we denote the collection of transformations making up the pseudo-group by $\G$ while subscripts will indicate the set of groupoid elements, e.g. $\G_\infty$ is the set of infinite jets of transformations from $\G$. Each $\G_p$, $0\leq p \leq \infty$, is a sub-groupoid of $\D_p$. Let $\G$ be a Lie pseudo-group determined by the formally integrable equations 
\begin{equation}\label{def}
F(x,X^{(q)})=0,
\eeq
where $X^{(q)}$ denotes all jets up to order $q$. Let $\Pe$ be a one parameter family of diffeomorphisms from $\G$. The flow $\ep\mapsto \Pe(x)$ through points $x\in\X$ generates a vector field
\beq\label{vid}
\vb(x)=\zeta^i(x)\frac{\partial}{\partial x^i}
\eeq
in the \emph{Lie algebroid} $\A$ of $\G$ of local vector fields on $\X$. Note the Einstein summation convention, which will be used whenever possible. The components of $\vb$ satisfy the linearization of (\ref{def}) at the identity section $\mathbbm{1}$:
\beq\label{lin}
L(x,\zeta^{(q)})=\frac{\partial F(x,X^{(q)})}{\partial \XiK}\restrictbig{\mathbbm{1}}\zeta^i_K=0.
\eeq
As mentioned at the beginning of this section, the linearized equations are locally solvable as a result of our regularity condition. 

Now assume that the action of $\p\in\G$ on $\X$, given by $x\mapsto \p(x)$, is \emph{extended} to a trivial bundle  $\E=\X\times\U\xrightarrow{\pi}\X$ where $\U\subset \R^m$ is an open set and the action on $\U$ depends only on jets of order $1$ (it is easy to extend all results to a general integer $N>1$ but we restrict to $1$ for simplicity). This means that \beq\label{jpU}j^1\p\restrict{x}\cdot (x,u)=(\p(x), U(x,u,j^1\p\resx))\eeq where $U$ is a function of $x$, $u$ and $j^1\p\restrict{x}$. Note that we could encode this by a Lie pseudo-group of transformations, $\H$, on the bundle $\E$, of the form
\[
(x,u)\mapsto (\p(x), \psi(x,u)),
\]
where $\p$ satisfies the defining equations of $\G$ and
\[
\psi=U=U(x,u,j^1\p\resx).
\]
The Lie pseudo-group $\H$ is a \emph{one-to-one prolongation} of $\G$ in the language of \cite{stormark}. We call extended group actions of the form (\ref{jpU}) \emph{horizontal} group actions. However, working with the pseudo-group $\H$ directly will be clumsy and notationally heavy as well as unnecessarily clouding the main ideas.

By prolongation, $\G$ acts on each $J^k(\E)\xrightarrow{\pi^k}\X$, where the action on $J^k(\E)$ depends only on pseudo-group jets of order $1+k$. We refer to the jet bundle $\Jinf(\E)$ as the \emph{submanifold jet bundle}, the submanifolds in question being the sections of $\E$ which are central objects in what is to come. We denote the submanifold jet coordinates by $u^\alpha_J$, where $1\leq \alpha\leq m$ and $J\in\Nn$ and write $\zxinf=(x,u,\ldots, \uaJ,\ldots)$ for infinite jets. We then have two bundles over $\X$,

\[
\begin{tikzpicture}
  \node (A0) at (0,0) {$\X$.};
  \node (A1) at (60:2cm) {$\Jinf(\E)$};
  \node (A2) at (120:2cm) {$\G_\infty$};
  \draw[->,font=\scriptsize]
  (A1) edge node[right] {$\pi^\infty$} (A0)
  (A2) edge node[left] {$\sigma$} (A0);

\end{tikzpicture}
\]

We form the pull-back bundle $(\pi^\infty)^*\G_\infty\to\Jinf(\E)$, with induced coordinates $$(x, u, \ldots, \uaJ,\ldots,X,\ldots, X^i_K,\ldots)=(\zxinf, j^\infty\phi\restrict{x})$$ and write $\Gt_\infty$ for the pull-back $(\pi^\infty)^*\G_\infty$. We extend the source and target maps to $\Gt_\infty$ by
\[
\tilde{\sigma}(\zxinf, j^\infty\phi\restrict{x})=\zxinf,\quad \tilde{\tau}(\zxinf, j^\infty\phi\restrict{x})=\jpx\cdot\zxinf,
\] 
providing $\pbg$ with a double fibration,
\[
\begin{tikzpicture}
  \node (A0) at (0,0) {$\Gt$};
  \node (A1) at (240:2cm) {$\Jinf(\E)$};
  \node (A2) at (300:2cm) {$\Jinf(\E)$.};
  \draw[->,font=\scriptsize]
  (A0) edge node[left] {$\tilde{\sigma}$} (A1)
  (A0) edge node[right] {$\tilde{\tau}$} (A2);

\end{tikzpicture}
\]
We shall denote the target variables on $\Jinf(\E)$ by capital letters, that is
\[
Z^{(\infty)}\restrict{X}=\jpx\cdot\zxinf=\jpx\cdot(x,u,\ldots, \uaJ,\ldots)=(X,U,\ldots, \UaJ,\ldots),
\]
and when we explicitly write out the partial derivatives in $\UaJ$, they shall be capitalized also, for example
\[
j^3\p\resx\cdot(x,u,u_x,u_{xx})=(X,U, U_X, U_{XX}),
\] 
and so on.

\bre\label{pjets}
The formula for a lifted invariant depends on the jets, $X,\ldots, X^i_K,\ldots$, of pseudo-group elements $\p\in\G$. Since these jets must satisfy the defining equations of $\G$ we may replace each principal derivative by parametric ones in these formulas. After fixing a choice of principal and parametric derivatives, the latter are thought of, and referred to as the \emph{group parameters} of $\G$.
\end{Rem}

We give a few examples of horizontal actions. Most of these will be studied in detail in later sections.

\bex\label{xup}
Let $\G$ be the pseudo-group of point transformations on $J^{1}(\R\times\R\to\R)$ with coordinates $(x,u,u_x)=(x,u,p)$. This means $X$ and $U$ are functions of $(x,u)$ only and $P$ is given by
\[
P=\frac{U_x+pU_u}{X_x+pX_u}.
\]
Say we are interested in the effect of these point transformations on second order ODE $u_{xx}=q=f(x,u,p)$. Then, under $(x,u,p)\mapsto(X,U,P)$, $q$ tranforms according to 
\[
q\mapsto Q=\frac{P_x+pP_u+qP_p}{X_x+pX_u}.
\]
We are therefore in the above set-up with $\X$ parametrized by $(x,u,p)$ and $\U$ parametrized by $q$. This example generalizes, of course, to any order above two.

\eex

\bex\label{Riemannian}
In studying the local invariants of Riemannian metrics on, say, two dimensional manifolds, we are interested in the effect of a smooth change of coordinates on the components of the tensor and their jets. Let the metric in local coordinates $x\in U\subset\R^2$ be $g=g_{ij}dx^idx^j$. A change of variables is an invertible map $\p:U\to V\subset\R^2$. This will transform $g$ according to
\[
(\p^{-1})^*\left(g_{ij}dx^idx^j\right)=g_{ij}d(\p^{-1})^id(\p^{-1})^j,
\]
and so the transformation of the components $g_{ij}$ will depend on the 1-jets of $\p$. We therefore have a horizontal action where $\G$ is the pseudo-group of \emph{all} local diffeomorphisms of $\R^2$ and $\U$ is the space of metric tensors, parametrized by $g_{ij}$, $1\leq i \leq j \leq 2$, with $g_{ij}=g_{ji}$.

More generally, the components, $u$, of any tensor on a manifold will transform, under a change of variables on the base manifold, $x\mapsto\p(x)$, as a function of $x$, $u$ and $j^1\p\resx$.  
\eex

\bex
Consider the Lie pseudo-group of transformations
\[
X=f(x),\quad Y=f_x(x)y+g(x),
\]
extended to an additional real variable $u$ by
\[
U=u+\frac{f_{xx}(x)y+g_x(x)}{f_x(x)},
\]
where $f:\R\to\R$ is a local, invertible, real-analytic map and $g:\R\to\R$ is an arbitrary real-analytic map. This pseudo-group has the structure described above: The transformation of the $x$ and $y$ coordinates form a Lie pseudo-group $\G$ with defining equations $X_y=Y_y-X_x=0$ and $\G$ acts on the $u$ coordinate by $u\mapsto U= u+Y_x/X_x$. This pseudo-group is of historical interest rather than geometric as it is related to one of Medolaghi's pseudo-groups, \cite{medolaghi98}.
\eex

\bex
Let $\dis\intc L(x,u,p)dx$ be a first order Lagrangian in one variable. A point transformation $(x,u,p)\mapsto(X,U,P)$ transforms $\dis\int\!\!L(x,u,p)dx$ according to
\[
\int\!\!L(X,U,P)dX=\intc L(X,U,P)\cdot(X_x+pX_u)dx. 
\]
We can set up the problem of equivalence of Lagrangians by extending the standard pseudo-group of contact transformations on $J^1(\R^2\to\R)$ to act on a space parametrized by a real variable $L$ via
\[
L\mapsto \frac{L}{X_x+pX_u}.
\]
More generally, the divergence equivalence of Lagrangians can be cast in this framework. In this case we require that $L_{pp}$ is preserved, as opposed to $L$. See Example \ref{divel} for more.
\eex

\bex
Consider a linear second order differential operator on $\R$,
\beq\label{eq:D2}
\mathscr{D}=fD^2+gD+h,
\eeq
where $f,g,h:\R\to\R$ are real-analytic, and $f\neq0$. When we apply $\mathscr{D}$ to a real-analytic function $u:\R\to\R$ we obtain the function
\[
fu''+gu'+hu.
\]
Now consider the pseudo-group, $\G$, of transformations of the $(x,u)$ of the form
\[
(x,u)\mapsto (\p(x), u\cdot\psi(x))=(X, U),
\]
where $\p, \psi:\R\to\R$ are real-analytic. Restricting to the set $\X=\{(x,u)\in\R^2~|~u>0\}$ and to $\psi>0$ this is a Lie pseudo-group of transformations of $\X$ that has defining equations
\[
X_y=0,\quad U_{uu}=0,\quad U_{u}u=U.
\]
The elements of $\G$ preserve the space of linear operators and a transformation from $\G$ maps (\ref{eq:D2}) to
\[
\bar{\mathscr{D}}=F\bar{D}^2+G\bar{D}+H,
\]
where $\bar{D}$ is the derivative with respect to the transformed independent variables $X$ and the lifted coefficients $F, G, H$ have explicit formulas
\beq\label{eq:FGH2}
\begin{aligned}
F&=f\frac{X_x^2}{U_u},\\
G&=-f\frac{2 U_xX_x-X_{xx}U_uu}{uU_u^2}+g\frac{X_x}{U_u},\\
H&=-f\frac{U_{xx}U_uu-2U_x^2}{u^2 U_u^3}-g\frac{U_x}{u U_u^2}+\frac{h}{U_u}.
\end{aligned}
\eeq
Notice that since $\dis\frac{U_x}{u}$ and $U_u$ are independent of $u$, these lifted invariants are also independent of $u$ (as they should be be). Each operator $\mathscr{D}$ defines a section,
\[
(x,u)\mapsto (x,u, f(x), g(x), h(x)),
\]
in the trivial bundle $\X\times\R^3\to\X$ and two operators are equivalent if their respective sections are congruent under the extended action of $\G$ given by (\ref{eq:FGH2}).
\eex

\end{subsection}

\begin{subsection}{Lifted invariants}\label{sec:lift}

A local transformation $\psi\in\G$ acts by right groupoid multiplication on the set of jets $\jpx\in\G_\infty$ with $x\in\text{dom}~\psi$ by 
\[
R_{\psi}\cdot \jpx=j^\infty\p\resx\cdot j^\infty\psi^{-1}\restrict{\psi(x)},
\]
and from the left on the set of jets $\jpx\in\G_\infty$ with $\tau(j^\infty\p\resx)\in\text{dom}~\psi$ by
\[
L_{\psi}\cdot\jpx=j^\infty\psi\restrict{\tau(j^\infty\p\resx)}\cdot j^\infty\p\restrict{x}.
\]
Note that projecting the right action onto the source coordinate gives $$ \sigma(R_{\psi}\cdot \jpx)=\psi(x)$$ and so we can extend this action from $\G_\infty$ to $\pbg$ by 
\beq\label{eq:Rpsi}
R_\psi\cdot (\zxinf, \jpx):=(j^\infty\psi\restrict{x}\cdot\zxinf, j^\infty\p\resx\cdot j^\infty \psi^{-1}\restrict{\psi(x)}),
\eeq
for all $x$ in the domain of definition of $\psi$. The target map $\tilde{\tau}:\pbg\to J^\infty(\E)$ provides a complete collection of all scalar invariants of this action:
\begin{align*}
R_\psi^*\tilde{\tau}(\zxinf,\jpx)&=\tilde{\tau}(j^\infty\psi\restrict{x}\cdot \zinf\resx, j^\infty\p\resx\cdot j^\infty\psi^{-1}\restrict{\psi(x)})\\
&= j^\infty\p\resx\cdot j^\infty\psi^{-1}\restrict{\psi(x)}\cdot (j^\infty\psi\restrict{x}\cdot \zinf\resx)\\
&= \left(j^\infty\p\resx\cdot j^\infty\psi^{-1}\restrict{\psi(x)}\cdot j^\infty\psi\restrict{x}\right)\cdot \zxinf=\jpx\cdot\zxinf\\
&=\tilde{\tau}(\zxinf, \jpx).
\end{align*}

This also means that the pull-back of any differential form $\o$ on $\Jinfe$ by the target map is invariant under this action:
\beq\label{Rtau}
R_\psi^*(\tilde{\tau}^*\o)=(\tilde{\tau}\comp R_\psi)^*\o=\tilde{\tau}^*\o.
\eeq

Turning our attention back to the diffeomorphism groupoid $\D_\infty$, in \cite{olver05} a basis, $\{\muiK\}_{K\in\Nn, 1\leq i\leq n}$, for the contact co-distribution on $\D_\infty$, that is \emph{right invariant} (under the action of $\D$) was constructed. Naturally, these contact forms are called the Maurer-Cartan forms of the pseudo-group $\D$. The form $\muiK$ agrees with the standard contact form $\upiK=dX^i_K-X^i_{K,j}dx^j$ on the identity section $\mathbbm{1}$ of $\D_\infty\to\X$, and each $\muiK$ is a linear combination of $\upjJ$ for $|J|\leq |K|$ (and conversely). When we restrict the Maurer-Cartan forms to a sub-groupoid $\G_\infty\subset\D_\infty$, we obtain certain linear dependencies among the $\muiK$. The important discovery, made in \cite{olver05}, is that these are given by
\beq\label{linmc}
\frac{\partial F(x,X^{(q)})}{\partial \XiK}\restrictbig{\widetilde{\mathbbm{1}}}\muiK=0,
\eeq
where restriction to $\widetilde{\mathbbm{1}}$ means first restricting to the identity section and then replacing all source coordinates $x$ by target coordinates $X$.

\bre\label{strrem}
The structure equations, or the formulas for $d\muiK$, are rather complicated expressions but we will mostly be interested in their top order terms. The entire  equations are 
\beq\label{structureeq}
d\muiK=\sum_{1\leq j\leq n}\o^j\wedge\muiKj+\sum_{\substack{L+M=K \\ |M|\geq1}}\binom{K}{L}\sum_{1\leq j\leq n}\mu^i_{L,j}\wedge\mu^j_M.
\eeq
Where we refer to $\sum_{1\leq j\leq n}\o^j\wedge\mu^i_{K,j}$ as the top order term, $L+M$ is the componentwise addition of multi-indices in $\Nz^n$ and
\[
\binom{K}{L}=\frac{K!}{L!M!}.
\] 
\end{Rem}

The Maurer-Cartan forms embed naturally into the pull-back bundle $\Gt_\infty\to\Jinf(\E)$. In fact, the space of one-forms on $\Gt_\infty$ is a direct sum of two \emph{right-invariant} (recall (\ref{eq:Rpsi})) subspaces. Complementing the Maurer-Cartan forms in this direct sum are the one-forms on $\Jinf(\E)$, pulled back to $\Gt_\infty$ by the target map $\taut$. This direct sum imbues the differential forms on $\Gt_\infty$ with a bigration. We correspondingly \emph{split} the exterior derivative on $\Gt_\infty$ into a \emph{group}- and \emph{jet}-component,
\[
d=d_G+d_J,
\]
where $d_G$ increases the group-grade and $d_J$ increases the jet-grade. Notice that since $\psi\in\G$ acts on the group and submanifold jet coordinates separately, $R_\psi\cdot(\zxinf, \jpx)=(j^\infty\psi\restrict{x}\cdot\zxinf, j^\infty\p\resx\cdot j^\infty\psi^{-1}\restrict{\psi(x)})$, the group and jet components of the derivative of any invariant form are again invariant. The differential one-forms on $\Jinf(\E)$ further divide into horizontal and contact forms with bases
\[
\{dx^1,\ldots,dx^n\}\quad\text{and}\quad\{d\uaJ-\uaJi dx^i\}_{J\in\Nn,1\leq \alpha\leq m},
\] 
respectively. The jet-differential $d_J$ then splits accordingly into a horizontal and vertical (submanifold contact) component,
\[
d_J=d_H+d_V.
\]
Since the right action of $\G$ on $\Gt_\infty$ obviously preserves horizontal and vertical forms (since $\G$ acts by contact transformations on $\Jinf(\E)$), as well as the Maurer-Cartan forms, these various differentials of invariant forms on $\Gt_\infty$ are still invariant. 

Let us define an operator, $\gamma_J$, that takes a general differential form on the pull-back bundle $\Gt_\infty\to\Jinf(\E)$, that is written in terms of the Maurer-Cartan forms, horizontal forms and submanifold jet contact forms, and equates all Maurer-Cartan forms to zero. Recall that $\tilde{\tau}^*\o$ is right invariant. Since the right action preserves the group and submanifold jet components of $\taut^*\o$ we have that $\gamma_J\taut^*\o$ is invariant. We call the operator
\beq\label{lift}
\lambda:=\gamma_J\taut^*
\eeq  
\emph{the lift operator}. We say that a differential form $\Omega$ on $\pbg$ is \emph{concentrated} on $\Jinf(\E)$ if $\gamma_J(\Omega)=\Omega$. Notice that $\lambda$ maps differential forms on $\Jinf(\E)$ to invariant differential forms on $\Gt_\infty$ that are concentrated on $\Jinf(\E)$. 

\end{subsection}

\begin{subsection}{Recurrence formula}
In the calculus of moving frames for Lie pseudo-groups, \emph{the recurrence formula} plays a fundamental role. We now deduce it for horizontal actions, but the proofs for general Lie pseudo-group actions are identical.

Let $\Pe=(\Pe^1,\ldots, \Pe^n)$ be a one parameter family of local diffeomorphisms in $\G$ with $\Phi_0$ being the identity. It generates a local vector field on $\X$,
\[
\vb(x)=\frac{d}{d\ep}\restrictbig{\ep=0}\Pe^i(x)\frac{\partial}{\partial x^i}=\zeta^i(x)\frac{\partial}{\partial x^i},
\] 
whose components, $\zeta^i$, satisfy (\ref{lin}). The domain of definition of $\vb$ is the set
\[
\text{dom}~\vb=\bigcup_{t>0}\bigcap_{|\ep|<t}\text{dom}~\Pe.
\] 
By prolongation, we have the flow $\ep\mapsto \taut(\zxinf, j^\infty\Pe\restrict{x})$ on $J^\infty(\E)$, through points $\zxinf\in(\pi^\infty)^{-1}(\text{dom}~\vb)$, generating the local vector field $\widehat{\vb}^\infty(\zxinf)$. The explicit formulas for the different components of $\widehat{\vb}^\infty$ can be given by a simple recurrence relation, see \cite{olver00}.

The \emph{lift} of the flow $\Pe$ is the flow on $\G_\infty$ given by $\ep\mapsto j^\infty\restrict{X}\Pe\cdot j^\infty\psi\restrict{x}$ for $j^\infty\psi\resx\in \G_\infty$ with $\tau(j^\infty\psi\resx)=X$ in dom $\Pe$. Note that it is tangent to the source fibers $\sigma^{-1}(x)$. This flow is obviously right-invariant and analyzing it at the \emph{identity section}, $\one\subset\G$ we find: $\displaystyle \widehat{\mathbf{V}}^\infty\restrict{\one}=\sum\zeta^i_K(X)\frac{\partial}{\partial \XiK}$. Note that, by definition, $\Vbinf$ and $\vbinf$ are related by the push-forward at the identity, $\taut_*\left(\Vbinf\restrict{\one}\right)=\vbinf$, where, implicitly, we have transferred $\Vbinf$ from $\G_\infty$ to $\Gt_\infty$. We shall also denote by $\one$ the pull-back of the bundle $\one\to\X$ to $\one\to\Jinf(\E)$, a subbundle of $\Gt_\infty$, and refer to it as \emph{the identity section} of $\Gt_\infty$.

We would like to know what happens when we take the exterior derivative of a lifted form $\lambda(\o)$, where $\o$ is a differential form on $\Jinf(\E)$. It is relatively easy (by an examination of $\lambda$) to see that $d_J\lambda(\o)=\lambda(d\o)$. The group component $d_G\lambda$ is more difficult to establish, but we have, for a lifted Lie algebroid vector field $\Vbinf$, since $\taut_*\left(\Vbinf\restrict{\one}\right)=\vbinf$, and since $\Vbinf$ has only group components,  at the identity section, 
\beq\label{vb}
\Vbinf(\lambda(\o))=\Vbinf(\gamma_J\taut^*\o)=\gamma_J\Vbinf(\taut^*\o)=\gamma_J\taut^*(\vbinf(\o)).
\eeq
 On the other hand, we have by Cartan's formula
\[
\Vbinf(\lambda(\o))=\Vbinf\interior d\lambda(\o) + d(\Vbinf\interior \lambda(\o)) = \Vbinf\interior d_G\lambda(\o),
\]
again, since $\Vbinf$ only has group components (i.e. is tangent to the $\tilde{\sigma}$-fibers). Now, $\Vbinf\interior d_G\lambda(\o)=\Vbinf\interior\sum(\mu^i_K\wedge\Omega^K_i)$ is an invariant differential form on $\Gt_\infty$, where $\Omega^K_i$ are some differential forms concentrated on $\Jinfe$. Evaluating it at the identity section gives
\beq\label{Vb}
\Vbinf(\lambda(\o))\resid=\sum\zeta^i_K(X)\Omega^K_i.
\eeq
Both (\ref{vb}) and (\ref{Vb}) are invariant differential forms that agree at the identity section, but this means they must agree everywhere. Writing $\gamma_J\taut^*(\vbinf(\o))=\sum\zeta^i_K(X)\widetilde{\Omega}^K_i$ (note that $\vbinf$ is a linear function of the components $\zeta^i_K$), we have, for every vector field in the Lie algebroid of $\G$ that
\[
\sum\zeta^i_K(X)\widetilde{\Omega}^K_i=\sum\zeta^i_K(X)\Omega^K_i,
\]
and hence $\widetilde{\Omega}^K_i=\Omega^K_i$. This gives the original recurrence formula, where on the left hand side we need to replace every $\lambda(\zeta^i_K)$ by $\mu^i_K$:
\beq\label{vbVb}
\lambda(\vbinf(\o))=d_G\lambda(\o).
\eeq
This is the original derivation of the recurrence formula for pseudo-groups from \cite{olver08moving}. But we can also calculate directly,
\[
\Vbinf(\lambda(\o))\restrictbig{\mathbbm{1}}=\sum\zeta^i_K(X)\left(\frac{\partial (\lambda(\o))}{\partial X^i_K}\right)\restrictbig{\mathbbm{1}},
\]
to see that $\displaystyle \Omega^K_i\restrictbig{\mathbbm{1}}=\left(\frac{\partial (\lambda(\o))}{\partial X^i_K}\right)\restrictbig{\mathbbm{1}}$. Each $\Omega^K_i$ is invariant on $\Gt_\infty$ and since right invariant differential forms on $\Gt_\infty$ are determined by their values on the identity section of $\Gt_\infty$ we can deduce that, in general,
\[
\Vbinf(\lambda(\o))=\sum\zeta^i_K(X)\lambda\left(\frac{\partial (\lambda(\o))}{\partial X^i_K}\restrictbig{\mathbbm{1}}\right).
\]
We can then write the recurrence formula
\beq\label{rec}
d_G\lambda(\o)
=
\lambda(\vbinf(\o))
=
\sum\mu^i_K\wedge\lambda\left(\frac{\partial (\lambda(\o))}{\partial X^i_K}\restrictbig{\one}\right).
\eeq
This form of the recurrence formula displays clearly the symbol structure of $\G$ and can be used to prove termination of the forthcoming equivalence method, cf. \cite{IMF}. However, since $\vbinf$ can be computed using the \emph{prolongation formula}, \cite{olver00}, we can easily obtain the algebraic structure of our partial moving frames, cf. Section \ref{examples1}. 

Notice that (\ref{rec}) is a kind of group parameter linearization of $\lambda(\o)$, e.g. if $\o=\uaJ$ we have
\beq\label{u}
d_G\lambda\uaJ=d_G\UaJ=\sum\lambda\left(\frac{\partial \UaJ}{\partial X^i_K}\restrictbig{\one}\right)\mu^i_K.
\eeq

The invariant horizontal forms $\oi:=\lambda(dx^i)$ are especially important. We have
\[
\oi=\lambda(dx^i)=\gamma_J\taut^*dx^i=\gamma_JdX^i=\gamma_J\left(X^i_jdx^j-\Upsilon^i\right)=X^i_jdx^j.
\]
Another important identity is
\beq\label{dhu}
d_H\UaJ=\UaJi\oi,
\eeq
which can be deduced by computing
\begin{align*}
{}&d_H\lambda(\uaJ)+d_V\lambda(\UaJ)=d_J\lambda(\uaJ)=\lambda(d\uaJ)\\
&=\lambda(\uaJi dx^i)+\lambda(\text{vertical forms})\\
&=\UaJi\oi+\text{vertical forms}\quad~~~(\lambda~\text{preserves vertical forms}),
\end{align*}
and comparing horizontal parts.

Combining (\ref{u}) and (\ref{dhu}) we have
\beq\label{du}
d\UaJ=\UaJi\oi+\sum\lambda\left(\frac{\partial \UaJ}{\partial X^i_K}\restrictbig{\one}\right)\mu^i_K+\text{contact forms on}~\Jinf(\E).
\eeq

\end{subsection}
\end{section}

\begin{section}{Equivalence of sections}\label{core}

 In this section we shall first introduce the equivariant moving frame for pseudo-groups, which, in conjunction with the recurrence formula will be our fundamental tool. Section \ref{SGp} gives a proof of a pseudo-group analog of the result for Lie groups that underlies the congruence problem in the finite dimensional case and the following section demonstrates how Cartan's $G$-structures naturally emerge from that vantage point.

%\bre\label{abuse}
%Can we get away with just defining $U\resu$ as $U\restrict{j^0u}$?
%
%From now on we shall, by a slight abuse of language, identify the locally defined function $u:\X\to\U$ with the section of $\E$ that it defines, $j^0u:x\mapsto (x,u(x))$.
%\end{Rem}

%Notice that if two sections are congruent, $\p\cdot s=\sb$ for some $\p\in\G$, then the \emph{symmetry groups} of $s$ and $\sb$ are isomorphic. Specifically, a map $\psi\in\G$ is a symmetry of $s$, meaning $\psi\cdot s=s$, if and only if $\p\comp\psi\comp\p^{-1}$ is a symmetry of $\sb$. In solving the congruence problem, a helpful first step would be to solve the symmetry problem for sections. We denote the symmetry groups by $\Gs$ and $\G^{\sb}$. In the next subsection we modify Algorithm \ref{ckalgo} to better fit completing general symmetry groups to involution, using the general theory of pseudo-groups and their Maurer-Cartan forms. In fact, we shall develop an algorithm explicitly for symmetry groups of sections, but, as can easily be seen, it will work on any general pseudo-group: see Example \ref{uselessex}. This will not be the end of the story though; in later sections we shall cast Cartan's equivalence method in our language and prove that it terminates, providing a great simplification over the algorithm we are about to produce.  

\begin{subsection}{The equivariant moving frame}\label{sec:equi}
The lifted submanifold jet coordinate functions $\UaJ=\tau^*\uaJ$ are invariant under the right action of $\G$ on the bundle $\pbg\to\Jinf(\E)$, and form a complete collection of invariants for this action. The equivariant moving frame, \cite{olver08moving}, is an object that computes the invariants of the prolonged action of $\G$ on the \emph{submanifold jet bundles} $J^r(\E\overset{\pi^r}{\to}\X)$. Unfortunately, a moving frame can usually not be defined on all of $J^r(\E)$ and one must partition $J^r(\E)$ into disjoint sets, each of which carrying its own moving frame. So let the groupoid $\G_p$ act on $J^r(\E)$, for some $1\leq r,p\leq\infty$. For $\S\subset J^r(\E)$ we denote by $\G^\S_p$ the jets $j^p\p\resx$, $x\in\pi^r\left(\S\right)$, that preserve $\S$ under the right action. Notice that $\G^\S_p$ is a groupoid. Denote the collection of local diffeomorphisms $\psi\in \G$ that preserve $\S\in J^r(\E)$ by $\G^\S$. We also form the pull-back bundles
\[
\Gt^\S_p\overset{\tilde{\sigma}^p_r}{\longrightarrow}\S,
\]
whose fiber over $j^ru\resx\in\S$ is the set of all $j^p\p\resx\in\G^\S_p$ with source coordinate $x$ and on which $\G^\S$ acts by
\[
R_\psi\cdot(j^p\p\resx, j^ru\resx)=(j^p\p\resx\cdot j^p\psi^{-1}\restrict{\psi(x)}, j^p\psi\resx\cdot j^ru\resx).
\] 
We give the following, more general, definition of a moving frame than the original, \cite{olver05}. This definition will allow us to solve equivalence problems for singular jets, which are not within reach of the original method of the equivariant moving frame (cf. Remark \ref{singular_ex}). 

\begin{definition}
Let $\G_p$ act on $J^r(\E)$. A (local) $p\th$ order moving frame with \emph{domain of definition} $\S\subset J^r(\E)$ is a (local) $\G^\S$-right-equivariant section, $\rho$, of $\Gt^\S_p\overset{\tilde{\sigma}^p_r}{\longrightarrow}\S$, i.e.
\beq\label{rho}
\rho(j^p\psi\restrict{x}\cdot j^ru\resx)=R_\psi\cdot\rho(j^ru\resx),
\eeq
for all $\psi\in\G^\S$ and $j^ru\resx\in\S$.
\end{definition} 
Letting $\psi\in\G^\S$, and abusing language somewhat, we can write (\ref{rho}) as $$\rho\comp\psi=R_\psi\comp\rho$$ and so right-equivariance is the property of a section of $\Gt^\S_p\to\S$ that it commutes with the action of $\G^\S$. Or, in other words, its image is invariant under the right action of $\G^\S$ on $\Gt^\S_p$.

\begin{Rem}
Notice that there is no guarantee that $\G^\S_p$ is the space of $p$-jets of elements of $\G^\S$ as we have not stipulated any local solvability of the differential equations determining the pseudo-group $\G^\S$. Indeed, the goal of Cartan's equivalence method, and, by extension, our forthcoming method, is deducing an involutive system for $\G^\S$. See the examples in Section \ref{examples1}.
\end{Rem}

A moving frame pulls invariant differential forms (under the right action action of $\G^\S$) on the pull-back bundle $\Gt^\S_p\to\S$ back to an invariant object on $\S$ as can be seen as follows. Let $\Omega$ be an invariant differential form on $\Gt^\S_p$ and let $\o=\rho^*\Omega$ be its pull-back on $\S$. Then, for $\psi\in\G^\S$,
\[
\psi^*\o=\psi^*\rho^*\Omega=(\rho\comp\psi)^*\Omega=(R_\psi\comp\rho)^*\Omega=\rho^* R_\psi^*\Omega=\rho^*\Omega=\o,
\]
by invariance of $\Omega$. In particular, $\rho^*\UaJ$, $|J|\leq r$, are a complete collection of invariants of the action of $\G^\S$ on $\S$. Generalizing a little bit, we define a \emph{partial} moving frame.

\begin{definition}\label{parmov}
A (local) $p\th$ order partial moving frame on $\S\subset J^r(\E)$ is a fibered subspace, $\B\overset{\tilde{\sigma}^p_r}{\longrightarrow}\S$ of $\Gt^\S_p\to J^r(\E)$, that is preserved by the right the action of $\G^\S$. The set $\S$ is called the domain of definition of $\B$ and we say that $\B$ is \emph{right-equivariant} under the action of $\G^\S$.
\end{definition}

Notice that if $\G_p$ acts on $J^r(\E)$ then $\G_t$ also acts on $J^r(\E)$ for all $p\leq t\leq \infty$. We can then form the pull-back bundles $\Gt_t\to \S$ on which $\G^\S$ (and $\G^\S_t$) acts. There are natural bundle maps $\Gt_t\overset{\nu^t_p}{\to}\Gt_p$, $p\leq t\leq\infty$, defined by
\[
(j^t\p\resx, j^ru\resx)\overset{\nu^t_p}{\longmapsto} (j^p\p\resx, j^ru\resx).
\]

\begin{definition}\label{Bpreimage}
Let $\B_p\to\S$ be a $p\th$ order partial moving frame. Denoting its preimage in $\Gt_\infty$ by $\Bt_p:=\left(\nu^\infty_p\right)^{-1}\left(\B_p\right)$ we notice that $\Bt_p$ is a partial moving frame in $\Gt_\infty\to\S$.
\end{definition}

\bre\label{rem:GS}
Definition \ref{parmov} is more general than previous definitions of partial moving frames which are usually only defined as fibered subspaces over a base $\S$ where $\S$ is assumed to be locally $\G$-invariant. Our definition is strictly more general as (locally) $\G^\S=\G$ for locally $\G$-invariant $\S$. Furthermore, our key result in the next subsection allows us to extend the moving frame technique to equivalence problems for sections that have until now remained outside the scope of the equivariant moving frame. 
\end{Rem}

Just like a moving frame pulls invariant objects on $\Gt_p$ to invariant objects on $\S$, restricting (pulling-back) any invariant object on $\Gt_p$ to a partial moving frame $\B_p\to\S$ gives an invariant object on $\B_p$ under $\G^\S$.

There turns out to be a practical construction available for a partial moving frame that we, for simplicity, demonstrate for $\G_\infty$ acting on $\Jinf(\E)$. The \emph{orbit} of the action of $\G$ on $\Jinf(\E)$ through $\jinf u\resx$ is the set $$\{\jinf\p\resx\cdot\jinf u\resx~|~\jinf\p\resx\in\G_\infty\}.$$ The construction of a moving frame is equivalent to a choice of a \emph{cross-section} to the orbits of the action of $\Jinf(\E)$. A cross-section $\K\subset\Jinf(\E)$ to the orbits is a (connected) subspace such that if an orbit intersects $\K$, it does so at a unique point and transversally. Given such a cross-section we can construct a moving frame as follows. Let $\S\subset\Jinf(\E)$ be the set of $\jinf u\resx$ whose orbits intersect $\K$ (this set is seldom all of $\Jinf(\E)$ and so each cross-section determines a different set $\S$). For $\jinf u\resx\in\S$ define the fiber (in the partial moving frame) over $\jinf u\resx$ to be the collection of $\jpx\in\G_\infty$ such that 
\[
\jpx\cdot\jinf u\resx\in\K.
\] 
Note that each cross-section defines a set $\S$. If the action is free on $\S$, then this partial moving frame reduces to a proper moving frame since then $\jpx\cdot\jinf u\resx\in\K$ \emph{uniquely} determines $\jpx$. The resulting subspace, $\B\to\S$, is right-equivariant. To see this note that if $(\jinf u\resx, \jpx)\in\B$ then, for $\psi\in\G^\S$,
\[
R_\psi\cdot(\jinf u\resx,\jpx)=(j^\infty\psi\restrict{x}\cdot\jinf u\resx, j^\infty\p\resx\cdot j^\infty\psi^{-1}\restrict{\psi(x)}),
\]
and 
\[
j^\infty\p\resx\cdot\psi^{-1}\restrict{\psi(x)}\cdot\left(j^\infty\psi\restrict{x}\cdot\jinf u\resx\right)=\jpx\cdot\jinf u\resx\in\K,
\]
so $R_\psi\cdot(\jinf u\resx, \jpx)\in\B$, and $\B$ is right-equivariant. 

\begin{Rem}\label{rem:orbit_regularity}
For there to exist a (local) cross-section to the pseudo-group orbits, the Lie pseudo-group must satisfy certain non-trivial criteria. For example, an irrational flow on a torus has no local cross-section since the intersection of an orbit and any open neighborhood in the torus consists of infinitely many connected components. For our constructions we must assume that local cross-sections to our orbits exist, and in applications one must check this on a case by case basis.
\end{Rem}

In practice the cross-section $\K$ is usually built, order-by-order, as the subspace where an increasing number of the jet-coordinates on $\Jinf(\E)$, $\uaJ$, are constant. This will give a decreasing sequence of partial moving frames
\beq\label{lest}
\Gt_\infty\supset\Bt_0\supset\Bt_1\supset\Bt_2\supset\ldots
\eeq
The partial moving frames are then described by the solutions to equations of the form
\[
\UaJ=\text{constant},
\]
and we say we have \emph{normalized} the lifted invariant $\UaJ$ when $\uaJ$ is constant on $\K$. 

\begin{Rem}\label{powder}
Much effort was devoted in \cite{olver09} to deciding when a sequence of partial moving frames as in (\ref{lest}) \emph{converges} to a bona-fide moving frame. That paper's key result was dubbed \emph{persistence of freeness of Lie pseudo-groups} and was achieved through a rather difficult analysis of convoluted algebraic objects such as ``eventual polynomial modules''. From our ``involutive'' point of view we have no need for the results of \cite{olver09} and we achieve their generalization by straightforwardly applying the Cartan-Kuranishi completion theorem, in the guise of Algorithm \ref{galgo}.  
\end{Rem}

\bre\label{groupparameter}
When restricting a partial moving frame $\Bt_p$ to a new equation $\UaJ=c$, we \emph{solve} $\UaJ=c$ for one of the group parameters. This group parameter then disappears from our parametrization of $\Bt_p$, and, by a slight risk of confusion, we say that this group parameter has been normalized. 
\end{Rem}

\bex
Consider the Lie pseudo-group action of horizontal transformations on sections of $\R^3\to\R^2$ obtained by extending the Lie pseudo-group of transformations on $\R^2$, $\G$, with elements
\[
X=f(x),\quad Y=f_x(x)y+g(x),
\]
to act on a variable $u\in\R$ such that
\[
U=u+\frac{Y_x}{X_x}.
\]
The pseudo-group $\G$ has determining equations
\[
X_y=0,\quad Y_y=X_x.
\]
We build the cross-section order-by-order, first setting 
\[
\K=\{x=y=u=0\}.
\]
A point $(\zxinf, \jpx)=(x,y,u,\ldots, \uaJ, \ldots, X,Y, \ldots,X_K, Y_K,\ldots)$ is in the corresponding partial moving frame, $\Bt_1\subset\Gt_\infty$, if and only if
\begin{align*}
{}&X=0,\quad Y=0,\quad U=0\\
\iff~&X=0,\quad Y=0, \quad Y_y=-uX_x.
\end{align*}
At the next order, we have lifted invariants
\begin{align*}
{}&U_X=\frac{u_x}{X_x}+\frac{Y_{xx}X_x-X_{xx}Y_x}{X_x^3},\\
&U_Y=-\frac{Y_x}{X_x}U_X+\frac{u_y}{X_x}+\frac{Y_{xy}}{X_x^2}.
\end{align*}
Notice that we are using the fact that the pseudo-group jets are coming from a pseudo-group with defining equations
\[
X_y=0,\quad Y_x=X_x,
\]
and we have replaced all principal derivatives by parametric ones in our formulas for lifted invariants. We can normalize both $U_X=U_Y=0$ to obtain a partial moving frame $\Bt_2\subset\Bt_1\subset\Gt$ on which
\[
Y_{xx}=\frac{X_{xx}Y_x-X_x^2u_x}{X_x},\quad Y_{yx}=-u_yX_x.
\]
We can continue like this, normalizing $U_{XX}=U_{XY}=0$, $U_{YY}=1$ and $U_{XXX}=U_{XXY}=0$ (skipping the details of the calculations; it is a little bit of work). At this point we have actually normalized \emph{all} pseudo-group parameters of order at most 4. 

Restricting the invariants $U_{XYY}$ and $U_{YYY}$, on $\Gt$, to the partial moving $\Bt_4$ gives the genuine invariants
\beq\label{uyy2}
U_{XXX}\mapsto\frac{u_{xyy}+uu_{yyy}+2u_yu_{yy}}{u_{yy}^{3/2}},\quad U_{YYY}\mapsto \frac{u_{yyy}}{u_{yy}^{3/2}}.
\eeq
This gives the rough idea of how the method of equivariant moving frames proceeds. We will redo this as Example \ref{medo} and shall see how (our modification of) Cartan's equivalence method drastically decreases the computational load of the above routine.   
\eex

\bre\label{cupS}
Notice that in (\ref{uyy2}) the jet coordinate $u_{yy}$ must be positive, and so, at some point of our normalization process, we made the decision to restrict our partial moving frame to the subspace of $\Jinf(\E)$ where $u_{yy}>0$. For jets with $u_{yy}<0$ some of the normalizations made in the above example were not possible, and we could not have constructed this particular partial moving frame. In general, the space $\Jinf(\E)$ must be partitioned into a collection of subsets 
\[
\Jinf(\E)=\bigcup_{i=1}^N\S_i,
\] 
where we obtain a different partial moving frame on each of $\S_i$ which corresponds to a different cross-section to the pseudo-group orbits. In the above example, we have the partition $\S_1=\{\zinf\resx~|~u_{yy}> 0\}$, $\S_2=\{\zinf\resx~|~u_{yy}= 0\}$ and $\S_3=\{\zinf\resx~|~u_{yy}< 0\}$. The subset $\S_2$ is ``singular'' in the sense that $\S_2$ is not a locally $\G$-invariant set. Our combination of Cartan's equivalence method and the equivariant moving frame, built on the results in the next subsection, allows for analysis of these singular jets, cf. Section \ref{examples1}.
\end{Rem}

\end{subsection}

\begin{subsection}{Sections of $\G_p$}\label{SGp}
A well-known fact is that two submanifolds, of the same dimension, in a Lie group are congruent if and only it there exists a map between them that preserves the pulled-back Maurer-Cartan forms, \cite{griffiths74}. This result underlies the method of moving frames for Lie groups, \cite{Fels99}. In this subsection we shall prove the infinite dimensional analog of this fact, which will underlie our eventual equivalence method.

Consider a Lie pseudo-group, $\G$, of local transformations on the manifold $\X$, determined by formally integrable and regular and hence locally solvable differential equations
\[
F(x,X\qv)=0.
\]
As noted above, at each order $\G_{q+t}:=\G_{q,t}$ carries a groupoid structure, the groupoid elements of $\G_p$ being the $p$-jets $j^p\p\resx$ for $\p\in\G$. (In the following, it will sometimes be convenient to denote these groupoid elements by lower case Latin letters such as $g$ and $h$.) As mentioned before, the source and target maps endow $\G_p$ with a double fibration
\[
\begin{tikzpicture}
  \node (A0) at (0,0) {$\G_p$};
  \node (A1) at (240:2cm) {$\X$};
  \node (A2) at (300:2cm) {$\X$.};
  \draw[->,font=\scriptsize]
  (A0) edge node[left] {$\sigma$} (A1)
  (A0) edge node[right] {$\tau$} (A2);

\end{tikzpicture}
\]
We shall denote the source and target fibers by
\[
\sigma^{-1}(x)=\G_p\resx\quad\text{and}\quad \tau^{-1}(X)=\G_p\irestrict{X}.
\]

Now consider two local sections of $\G_p\to\X$, $s$ and $\sb$. (Note that $s$ and $\sb$ are \emph{not necessarily} (indeed, in practice, never will be) the graphs of prolongations of transformations in $\G$, i.e. contact forms do not vanish when restricted to their images.) We want to know whether there exists a local transformation $\p\in\G$ such that 
\[
R_\p\cdot s=\sb.
\]

Immediate invariants for this problem are the target coordinates of $s$ and $\sb$, since the right-action leaves these invariant. For the time being we consider only sections $s$ and $\sb$ that have constant, and equal, target coordinates,
\beq\label{tausx}
\tau(s(x))=\tau(\sb(\xb))=X_0=\text{constant},
\eeq
but our results will trivially extend to sections with arbitrary target
coordinates (cf. Theorem \ref{Gco_gen} below). Notice that $s$ and $\sb$
satisfying (\ref{tausx}) are sections of the bundle
$\G_p\irestrict{X_0}\overset{\sigma}{\to} \X$.
Obviously, the tangent vectors to the images of $s$ and $\sb$ will then have
zero target component. Being tangent to the target fibers, we shall call such
vectors in $T\G_p$ \emph{$\tau$-vertical}. Tangent vectors that have zero
source-components shall be called vertical. Notice that
$T\left(\G_p\resx\right)$ is the space of vertical tangent vectors at the source
coordinate $x$, while $T(G_p\irestrict{X})$ is the space of
$\tau$-vertical vectors at the target coordinate $X$.

As we have seen, a local transformation $\p\in\G$ acts on $\G_p$ by the left and right actions:
\beq\label{RLp}
\begin{aligned}
{}&R_{\p}\cdot j^p\psi\restrict{x}=j^p\psi\restrict{x}\cdot j^p\p^{-1}\restrict{\p(x)},\quad\text{for all $j^p\psi\restrict{x}$ with source in the domain of $\p$},\\
&L_{\p}\cdot j^p\psi\restrict{x}=j^p\p\restrict{\tau(j^p\psi\resx)}\cdot j^p\psi\restrict{x},\quad\text{for all $j^p\psi\restrict{x}$ with target in the domain of $\p$}.
\end{aligned}
\eeq
On the other hand, for a single groupoid element $g\in\G_p$ with $\sigma(g)=x$ we can define the map $R_g\cdot h=h\cdot g^{-1}$ for all $h$ with $\sigma(h)=x$ (where $\cdot$ is the groupoid multiplication), i.e. from the source fiber $\G_p\restrict{\sigma(g)=x}$ to $\G_p\restrict{\tau(g)}$. This map is real-analytic (indeed, it is algebraic) and its derivative is a map between vertical vectors:
\beq\label{Rg}
R_{g_*}:T_g\G_p\restrict{x}\to T_{h\cdot g^{-1}}G_p\restrict{\tau(g)}.
\eeq
Similarly, we can define the map $L_g\cdot h=g\cdot h$ for all $h\in\G_p$ with $\sigma(g)=\tau(h)$. Its differential is a map between $\tau$-vertical vectors:
\beq\label{Lg}
L_{g_*}:T_g\G_p\irestrict{x}\to T_{g\cdot h}G_p\irestrict{\tau(g)}.
\eeq

A rather trivial, but important, observation is that the derivative of the right and left actions of a local transformation $\p\in\G$ agree with the derivatives of the groupoid actions (\ref{Rg}) and (\ref{Lg}) when restricted to vertical, and $\tau$-vertical vectors, respectively.

\begin{lemma}\label{same}
Let $g\in\G_p$ be a groupoid element with source $x$ and target $X$. Given a vertical tangent vector $V\in T_g\G_p\resx$ and a local transformation $\p\in\G$ with domain including $x$, we have
\[
R_{\p_*}V=R_{(j^p\p\resx)_*}V,
\]
Where the left hand side is the derivative of the map $R_\p$, and the right hand side by (\ref{Rg}). Similarly, for a $\tau$-vertical vector $V\in T_g\G_p\irestrict{X}$ we have
\[
L_{\p_*}V=L_{(j^p\p\resx)_*}V.
\]  
\end{lemma}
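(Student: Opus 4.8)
The plan is to exploit the fact that a vertical tangent vector is, by definition, tangent to the source fibre $\G_p\resx$, and that on this fibre the transformation action $R_\p$ collapses to the single groupoid translation $R_{j^p\p\resx}$. The entire content of the lemma is this collapse; once it is in hand, equality of the derivatives follows from the chain rule with nothing to spare.

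First I would make the pointwise comparison precise. For any $h\in\G_p$ whose source is $x$, the definition (\ref{RLp}) gives
\[
R_\p\cdot h = h\cdot j^p\p^{-1}\restrict{\p(\sigma(h))} = h\cdot j^p\p^{-1}\restrict{\p(x)} = h\cdot\left(j^p\p\resx\right)^{-1},
\]
the middle equality holding because $\sigma(h)=x$ is pinned along the whole fibre. But $h\mapsto h\cdot(j^p\p\resx)^{-1}$ is exactly the groupoid map $R_{j^p\p\resx}$ of (\ref{Rg}). Hence $R_\p$ and $R_{j^p\p\resx}$ agree \emph{as maps} on the entire source fibre $\G_p\resx$, not merely at the point $g$.

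Next I would pass to derivatives. Since $V\in T_g\G_p\resx$ is vertical, I represent it by a curve $\gamma(t)\subset\G_p\resx$ with $\gamma(0)=g$ and $\dot\gamma(0)=V$. The curve never leaves the fibre, so by the previous paragraph $R_\p(\gamma(t))=R_{j^p\p\resx}(\gamma(t))$ for every $t$; differentiating at $t=0$ yields $R_{\p_*}V=R_{(j^p\p\resx)_*}V$, which is the assertion. The $\tau$-vertical case is entirely symmetric, with the roles of source and target interchanged: for $V\in T_g\G_p\irestrict{X}$ a representing curve stays in the target fibre $\G_p\irestrict{X}$, so $\tau$ is frozen at $X$ and the jet $j^p\p\restrict{\tau(\gamma(t))}=j^p\p\restrict{X}$ used by $L_\p$ in (\ref{RLp}) is constant. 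Thus $L_\p$ restricts on $\G_p\irestrict{X}$ to left multiplication by the fixed element $j^p\p\restrict{X}$, i.e.\ to $L_{j^p\p\restrict{X}}$ of (\ref{Lg}), and differentiating the target-vertical curve gives the corresponding identity.

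There is no genuine obstacle here; the statement is a bookkeeping lemma, and the only step requiring care—indeed the whole reason it is worth isolating—is that the coincidence of $R_\p$ and $R_{j^p\p\resx}$ is special to the fibre directions. Off the source fibre the base point $\p(\sigma(h))$ at which the inverse jet is evaluated varies with $h$, so the two maps differ there and their derivatives disagree on non-vertical vectors. I must therefore be explicit that $V$ is realized by a curve remaining inside the fibre, so that the two maps coincide along the \emph{entire} curve and not merely to first order at $g$; after that, the equality of differentials is immediate.
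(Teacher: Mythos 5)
Your proof is correct and follows essentially the same route as the paper's: represent the vertical vector by a curve confined to the source fibre, observe that $R_\p$ and the groupoid translation $R_{j^p\p\resx}$ coincide along that curve because the source is frozen at $x$, and differentiate (likewise for the $\tau$-vertical case). Your explicit remark that the left action on the target fibre collapses to multiplication by the fixed jet $j^p\p\restrict{X}$ (the jet at the \emph{target}) is also the correct reading of the second identity.
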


\begin{proof}
Let $\Phi(\ep)$ be a path in $\G_p$ such that $\frac{d}{d\ep}\restrictbig{\ep=0}\Phi(\ep)=V$ and such that $\sigma(\Phi(\ep))=x$ for all $\ep$. Then we have
\beq\label{ep1}
R_\p\cdot \Phi(\ep)=\Phi(\ep)\cdot j^p\p^{-1}\restrict{\p(x)},
\eeq
but since each $\Phi(\ep)$ has source $x$, this is trivially equal to
\beq\label{ep2}
R_{j^p\p\resx}\cdot \Phi(\ep),
\eeq
and the result follows by differentiating (\ref{ep1}) and (\ref{ep2}) with respect to $\ep$ and evaluating at $\ep=0$. The second part proceeds similarly. 
\end{proof}

Turning to the equivalence problem of sections with constant target coordinates, let $s$ and $\sb$ be two sections of $\G_p\irestrict{X_0}\overset{\sigma}{\to}\X$. First assume that a local transformation $\p$ exists such that $$R_\p\cdot s=\sb\comp\p.$$ Taking the pull-back of a right-invariant Maurer-Cartan form $\muiK$, $|K|<p$, on $\G_p$ on both sides of this equation gives
\[
s^*R_\p^*\muiK=(\sb\comp\p)^*\muiK~~\iff~~s^*\muiK=\p^*\sb^*\muiK.
\]
This means that a necessary condition for there to exist an equivalence map $\p$ is that it preserves the pulled-back Maurer-Cartan forms $s^*\muiK$ and $\sb^*\muiK$. We shall prove the converse, i.e. that any local transformation, $f$, of $\X$ that preserves the set of pulled-back forms $s^*\muiK$ and $\sb^*\muiK$ must be a transformation from $\G$ and satisfy $R_f\cdot s=\sb$. Note that \emph{if} this result is indeed true and we are given a local transformation $f$ of $\X$ that preserves this collection of one-forms, we have
\[
R_{f(x)}\cdot s(x)=\sb(f(x))~~\iff~~s(x)\cdot j^pf^{-1}\resx=\sb(f(x)).
\]
We can solve for $j^pf\resx$ in this equation to obtain
\beq\label{jpf}
j^pf\resx=\sb(f(x))^{-1}\cdot s(x).
\eeq
Given a local transformation, $f$, of $\X$ that preserves $s^*\muiK$ and $\sb^*\muiK$, our method of proof will be to define a section of $\G_p$ by setting
\[
a(x):=\sb(f(x))^{-1}\cdot s(x),
\]
(note that this is indeed a section of $\G_p$) and proving that all the Maurer-Cartan forms $\muiK$, $|K|<p$, on $\G_p$ vanish when restricted to it. Since the Maurer-Cartan forms are a basis for the contact co-distribution on $\G_p$ this means that $j^pf\resx$ must indeed be the prolongation of a local transformation.

Equation (\ref{jpf}) motivates the definition of the map
\[
\mb:U_\mb\to\G_p,\quad \mb(g,h)=g^{-1}\cdot h,
\]
defined on the subset of $\G_p\times\G_p$ given by
\[
U_\mb=\{(g,h)\in\G_p\times\G_p~|~\tau(g)=\tau(h)\},
\]
i.e. on all pairs $(g,h)$ with a shared target. We need to know how the differential of this map behaves on $\tau$-vertical vectors. Denote the \emph{inverse map} on $\G_p$ by $\ib$. This map sends a groupoid element $j^p\p\resx$ to $j^p\p^{-1}\restrict{\p(x)}$ and is a diffeomorphism of $\G_p$. Notice that the differential $\ib_*$ maps vertical tangent vectors to $\tau$-vertical ones, and vice versa. We need the following two lemmas before giving our main result.

\begin{lemma}\label{m}
Let $V\in T_g\G_p\irestrict{\tau(g)}$ and $W\in T_h\G_p\irestrict{\tau(h)}$ be $\tau$-vertical vectors on $\G_p$ with $(g,h)\in U_\mb$. Then
\[
\mb_*(V, W)=R_{h^{-1}_*}\ib_*V + L_{g^{-1}_*}W.
\]
\end{lemma}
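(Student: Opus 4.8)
The plan is to exploit the bilinear-type structure of $\mb$ by splitting the tangent vector $(V,W)\in T_{(g,h)}U_\mb$ as $(V,0)+(0,W)$ and invoking linearity of the differential $\mb_*$. First I would verify that both $(V,0)$ and $(0,W)$ really lie in $T_{(g,h)}U_\mb$: since $U_\mb$ is cut out by the condition $\tau(g)=\tau(h)$, a pair $(A,B)$ is tangent to $U_\mb$ exactly when $\tau_*A=\tau_*B$, and as $V$ and $W$ are $\tau$-vertical we have $\tau_*V=\tau_*W=0$, so both $(V,0)$ and $(0,W)$ satisfy this condition. Linearity then gives $\mb_*(V,W)=\mb_*(V,0)+\mb_*(0,W)$, and it suffices to compute the two partial differentials separately.

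For the second term I would freeze $g$ and vary $h$ along a $\tau$-vertical path $h(\ep)$ in the target fiber $\G_p\irestrict{\tau(h)}=\G_p\irestrict{\tau(g)}$ with $h(0)=h$ and $\frac{d}{d\ep}\restrict{\ep=0}h(\ep)=W$. Along this path $\mb(g,h(\ep))=g^{-1}\cdot h(\ep)=L_{g^{-1}}\cdot h(\ep)$, the single-element left translation of (\ref{Lg}), whose composability condition $\sigma(g^{-1})=\tau(g)=\tau(h(\ep))$ holds for all $\ep$. Differentiating at $\ep=0$ yields $\mb_*(0,W)=L_{g^{-1}_*}W$.

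For the first term I would freeze $h$ and vary $g$ along a $\tau$-vertical path $g(\ep)$ with $\frac{d}{d\ep}\restrict{\ep=0}g(\ep)=V$. Now $\mb(g(\ep),h)=g(\ep)^{-1}\cdot h$ factors as inversion followed by a single-element right translation: since $R_{h^{-1}}\cdot k=k\cdot(h^{-1})^{-1}=k\cdot h$, one has $\mb(g(\ep),h)=R_{h^{-1}}\cdot\ib(g(\ep))$. The crucial bookkeeping step, which I would justify via the observation that $\ib_*$ interchanges vertical and $\tau$-vertical vectors, is that $\ib_*V$ is \emph{vertical}, so that the differential $R_{h^{-1}_*}$ of (\ref{Rg}), which is defined on vertical vectors, may legitimately be applied to it; the composability condition $\sigma(\ib(g(\ep)))=\tau(g)=\tau(h)=\sigma(h^{-1})$ again holds throughout. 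Differentiating gives $\mb_*(V,0)=R_{h^{-1}_*}\ib_*V$, and summing the two contributions produces the asserted identity.

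The main obstacle is purely the groupoid bookkeeping: correctly recognizing the restricted maps $h(\ep)\mapsto g^{-1}\cdot h(\ep)$ and $g(\ep)\mapsto g(\ep)^{-1}\cdot h$ as the single-element translations $L_{g^{-1}}$ and $R_{h^{-1}}$ (the latter precomposed with $\ib$), checking that every source/target composability condition is met along the chosen paths, and keeping careful track of which vectors are vertical and which are $\tau$-vertical so that the differentials in (\ref{Rg}) and (\ref{Lg}) act on inputs of the correct type. Beyond the chain rule and linearity of $\mb_*$ no further analysis is required.
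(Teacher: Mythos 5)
Your proof is correct and follows essentially the same route as the paper's: split $(V,W)=(V,0)+(0,W)$ by linearity of $\mb_*$, compute each partial differential along a path with constant target, and recognize the resulting maps as $R_{h^{-1}_*}\ib_*$ and $L_{g^{-1}_*}$. Your additional bookkeeping (checking that $(V,0)$ and $(0,W)$ are tangent to $U_\mb$, that $\ib_*V$ is vertical, and that the composability conditions hold) only makes explicit what the paper leaves implicit.
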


\begin{proof}
The $\tau$-vertical tangent vectors at each point of $\G_p$ form a vector space, and there must be some linear maps $A:T_g\G_p\irestrict{\tau(g)}\to T_{g^{-1}\cdot h}\G_p$ and $B:T_h\G_p\irestrict{\tau(h)}\to T_{g^{-1}\cdot h}\G_p$ such that
\[
\mb_*(V, W)=AV+BW.
\]
Now consider $\mb_*(V, 0)$, where $V\in T_g\G_p\irestrict{\tau(g)}$ and $0\in T_h\G_p\irestrict{\tau(h)}$, and let $\Phi(\ep)$ be a path in $\G_p$ with $\tau(\Phi(\ep))=\tau(g)=\tau(h)$ constant and with $\frac{d}{d\ep}\restrictbig{\ep=0}\Phi(\ep)=V$. Then
\begin{align*}
\mb_*(V, 0)&=\frac{d}{d\ep}\restrictbig{\ep=0}\mb(\Phi(\ep), h)=\frac{d}{d\ep}\restrictbig{\ep=0}\left(\Phi(\ep)^{-1}\cdot h\right)\\
&=\frac{d}{d\ep}\restrictbig{\ep=0}\left(R_{h^{-1}}\cdot \ib(\Phi(\ep))\right) = R_{h^{-1}_*}\ib_*V.
\end{align*}
This means that $A=R_{h^{-1}_*}\ib_*$. To find $B$, we choose a path $\Phi(\ep)$ with constant target $ \tau( \Phi(\varepsilon))=\tau(g)=\tau(h)$ such that $\frac{d}{d\ep}\restrictbig{\ep=0}\Phi(\ep)=W$ and compute (here $0\in T_g\G_p\irestrict{\tau(g)}$)
\[
\begin{aligned}
\mb_*(0, W)&=\frac{d}{d\ep}\restrictbig{\ep=0}\mb(g,\Phi(\ep))=\frac{d}{d\ep}\restrictbig{\ep=0}\left(g^{-1}\cdot\Phi(\ep)\right)\\
&=\frac{d}{d\ep}\restrictbig{\ep=0}\left(L_{g^{-1}}\cdot\Phi(\ep)\right)=L_{g^{-1}_*}W.
\end{aligned}
\]
This proves the lemma.
\end{proof} 

Denoting the identity section of $\G_p$ by $\mathbbm{1}$ we have the following.

\begin{lemma}\label{gg}
For a $\tau$-vertical $V\in T_g\G_p\irestrict{\tau(g)}$, we have
\[
\mb_*(V,V)=R_{g^{-1}_*}\ib_*V+L_{g^{-1}_*}V=\one_*\sigma_*V.
\]
\end{lemma}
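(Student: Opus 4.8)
The plan is to prove the two equalities separately: the first is essentially immediate from the previous lemma, while the second is the substantive point and rests on a single path computation. First I would observe that the first equality is just Lemma \ref{m} specialized to $h=g$ and $W=V$. Since $\tau(g)=\tau(g)$ we have $(g,g)\in U_\mb$, and $V$ is $\tau$-vertical by hypothesis, so the lemma applies verbatim and yields $\mb_*(V,V)=R_{g^{-1}_*}\ib_*V+L_{g^{-1}_*}V$.

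For the second equality I would argue geometrically rather than by manipulating the two summands. Choose a path $\Phi(\ep)$ in $\G_p$ with $\Phi(0)=g$, $\frac{d}{d\ep}\restrictbig{\ep=0}\Phi(\ep)=V$, and constant target $\tau(\Phi(\ep))=\tau(g)$; such a path exists precisely because $V$ is $\tau$-vertical. Then $\ep\mapsto(\Phi(\ep),\Phi(\ep))$ is a path in $U_\mb$ whose velocity at $\ep=0$ is the diagonal vector $(V,V)$, so that $\mb_*(V,V)=\frac{d}{d\ep}\restrictbig{\ep=0}\mb(\Phi(\ep),\Phi(\ep))$. The key algebraic input is the groupoid identity $a^{-1}\cdot a=\one(\sigma(a))$: checking sources and targets with the conventions fixed above, $a^{-1}\cdot a$ has both source and target equal to $\sigma(a)$ and hence equals the unit there. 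Therefore $\mb(\Phi(\ep),\Phi(\ep))=\Phi(\ep)^{-1}\cdot\Phi(\ep)=\one(\sigma(\Phi(\ep)))$, and differentiating through the chain rule gives $\mb_*(V,V)=\one_*\sigma_*V$, as required.

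The main thing to be careful about is not the computation itself but the bookkeeping around $\mb_*$: I must confirm that the diagonal velocity $(V,V)$ genuinely lies in the tangent space to $U_\mb$ at $(g,g)$ and that the chosen path stays inside $U_\mb$, so that applying $\mb_*$ is legitimate. Both hold automatically here because the defining constraint $\tau(g)=\tau(h)$ is trivially satisfied along the diagonal. I expect no real obstacle; the only potential pitfall is a slip in identifying $a^{-1}\cdot a$ with the unit at the \emph{source} rather than the target, which the explicit source--target check rules out.
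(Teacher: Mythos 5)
Your proof is correct and follows essentially the same route as the paper: both equalities are obtained exactly as in the paper's own argument, namely by specializing Lemma \ref{m} to the diagonal $(g,g)$ with $W=V$, and by differentiating the identity $\mb(\Phi(\ep),\Phi(\ep))=\Phi(\ep)^{-1}\cdot\Phi(\ep)=\one(\sigma(\Phi(\ep)))$ along a path representing $V$. Your extra bookkeeping (checking that the diagonal path stays in $U_\mb$, which holds automatically since $\tau(a)=\tau(a)$) is a harmless refinement of the same argument.
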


\begin{proof}
Notice that $\mb(g,g)=g^{-1}\cdot g=\one(\sigma(g))$, and so, given a path $\Phi(\ep)$ with $\frac{d}{d\ep}\restrictbig{\ep=0}\Phi(\ep)=V$ we have
\[
\mb_*(V,V)=\frac{d}{d\ep}\restrictbig{\ep=0}\Phi(\ep)^{-1}\cdot \Phi(\ep)=\frac{d}{d\ep}\restrictbig{\ep=0}\one(\sigma(\Phi(\ep))=\one_*\sigma_*V.
\]
But by Lemma \ref{m} we also have
\[
\mb_*(V,V)=R_{g^{-1}_*}\ib_*V+L_{g^{-1}_*}V,
\]
proving the lemma.
\end{proof}

\begin{theorem}\label{Gco}
Let $s$ and $\sb$ be two sections of $\G_p\irestrict{X_0}\overset{\sigma}{\to}\X$ and let $f$ be a local transformation on $\X$ such that
\[
f^*\sb^*\muiK=s^*\muiK,\quad |K|<p.
\]
Then $f\in\G$ and $R_f\cdot s=\sb$.
\end{theorem}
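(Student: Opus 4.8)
The plan is to carry out the strategy already sketched before the statement. Given $f$, define the section
\[
a(x):=\sb(f(x))^{-1}\cdot s(x)
\]
of $\G_p$; this is well defined since $s$ and $\sb\comp f$ both have constant target $X_0$, and $a$ has source $x$ and target $f(x)$. The whole theorem reduces to showing $a^*\muiK=0$ for every $|K|<p$. Indeed, the $\muiK$ with $|K|<p$ span the contact co-distribution on $\G_p$, so their vanishing on $a$ forces $a$ to be holonomic, i.e. the prolongation of a genuine map; since $a$ has source $x$ and target $f(x)$ that map is $f$, whence $a=j^pf$. Then $j^pf\resx\in\G_p$ for all $x$ means $f$ satisfies the (formally integrable, analytic, hence locally solvable) defining equations, so $f\in\G$; and inverting $j^pf\resx=\sb(f(x))^{-1}\cdot s(x)$ gives $R_f\cdot s=\sb\comp f$.

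To compute $a^*\muiK$, write $a=\mb\comp(\sb\comp f,s)$. For $V_x\in T_x\X$ the vectors $V:=\sb_*f_*V_x$ and $W:=s_*V_x$ are $\tau$-vertical, so Lemma \ref{m} gives
\[
a_*V_x=R_{h^{-1}_*}\ib_*V+L_{g^{-1}_*}W,\qquad g:=\sb(f(x)),\ h:=s(x).
\]
The crucial preliminary observation is that, although $s$ and $\sb$ are not holonomic as sections, each individual value $g$ and $h$ is a genuine $p$-jet of a transformation in $\G$ (by local solvability of the defining equations); thus Lemma \ref{same} lets me replace the groupoid right- and left-translations by $R$ and $L$ of actual pseudo-group elements. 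For the right term this is immediate: right-invariance of the Maurer-Cartan forms gives $\muiK(R_{h^{-1}_*}\ib_*V)=\muiK(\ib_*V)$, and Lemma \ref{gg} at $g$ (together with the vanishing of $\muiK$ on the identity section) rewrites $\muiK(\ib_*V)=-\muiK(L_{g^{-1}_*}V)$. Hence
\[
\muiK(a_*V_x)=\muiK\!\left(L_{g^{-1}_*}W\right)-\muiK\!\left(L_{g^{-1}_*}V\right),
\]
an identity that so far uses only the groupoid geometry, not the hypothesis.

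The step I expect to be the main obstacle is the left-translation term, because the Maurer-Cartan forms are only right-invariant and $L_{g^{-1}}$ genuinely fails to preserve them. The resolution is that, writing $g=j^p\p\resx$ with $\p\in\G$, Lemma \ref{same} identifies $L_{g^{-1}_*}$ on $\tau$-vertical vectors with the differential of the pseudo-group left action $L_{\p^{-1}}$. This left action fixes source fibers and preserves the contact co-distribution, so $L_{\p^{-1}}^*\muiK$ is a combination of the $\mujJ$ with no horizontal part; moreover, since post-composition is order-preserving, only $|J|\le|K|$ occur, $L_{\p^{-1}}^*\muiK=\sum_{|J|\le|K|}c^{iK}_{jJ}\,\mujJ$. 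Consequently both $\muiK(L_{g^{-1}_*}V)$ and $\muiK(L_{g^{-1}_*}W)$ are the same $g$-dependent combination of the values $\mujJ(V)$ and $\mujJ(W)$ with $|J|\le|K|<p$.

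Finally I feed in the hypothesis. The assumption $f^*\sb^*\muiK=s^*\muiK$ says exactly $\muiK(V)=\muiK(W)$ for all $|K|<p$, so every pair $\mujJ(V)$, $\mujJ(W)$ appearing above (all with $|J|<p$) agrees. Therefore $\muiK(L_{g^{-1}_*}W)=\muiK(L_{g^{-1}_*}V)$, the displayed difference vanishes, and $a^*\muiK=0$ for $|K|<p$. Combined with the reduction in the first paragraph, this proves $f\in\G$ and $R_f\cdot s=\sb$.
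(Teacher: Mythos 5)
Your route is the paper's own: the same section $a(x)=\sb(f(x))^{-1}\cdot s(x)$, the same reduction to showing $a^*\muiK=0$, and the same use of Lemmas \ref{m}, \ref{gg} and \ref{same} (you merely run the computation in mirrored order: you first convert the $R$-term into an $L$-term via Lemma \ref{gg} and then compare the two $L$-terms, whereas the paper first swaps $s_*v$ for $(\sb\comp f)_*v$ inside the $L$-term and then converts it into an $R$-term). Everything up to and including your identity $\muiK(a_*V_x)=\muiK(L_{g^{-1}_*}W)-\muiK(L_{g^{-1}_*}V)$ is correct, including the use of local solvability to invoke Lemma \ref{same}.

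However, the final step has a genuine gap. From ``$L_{\p^{-1}}$ is a contact transformation preserving source fibers and order'' you may indeed conclude $L_{\p^{-1}}^*\muiK=\sum_{|J|\le|K|}c^{iK}_{jJ}\,\mujJ$, but the coefficients $c^{iK}_{jJ}$ are \emph{functions} on $\G_p$, not constants. In $\muiK(L_{g^{-1}_*}V)=(L_{\p^{-1}}^*\muiK)(V)$ these functions are evaluated at $g=\sb(f(x))$, while in $\muiK(L_{g^{-1}_*}W)=(L_{\p^{-1}}^*\muiK)(W)$ they are evaluated at $h=s(x)$. The hypothesis only gives $\mujJ(V)=\mujJ(W)$; for a contact form with non-constant coefficients, equality of the $\mujJ$-values at two \emph{different} points does not imply equality of the evaluations, so your conclusion ``both are the same $g$-dependent combination'' does not follow from what you established. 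The missing ingredient is exactly the pivot of the paper's proof: since the left and right actions commute, $L_{\p^{-1}}^*\muiK$ is a \emph{right-invariant} contact form, so its coefficients $c^{iK}_{jJ}$ are right-invariant functions; because (by local solvability) the right action of $\G$ sweeps out each $\tau$-fiber, such functions depend only on the target, and since $g$ and $h$ both lie in the fiber $\tau^{-1}(X_0)$ we get $c^{iK}_{jJ}(g)=c^{iK}_{jJ}(h)$. With that observation inserted, your difference vanishes and the proof closes; without it, the last equality is unjustified.
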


\begin{proof}
Consider the section of $\G_p$ given by $$a(x)=\sb(f(x))^{-1}\cdot s(x)=\mb\comp(\sb(f(x)), s(x)).$$ Notice that this section agrees with $j^0f(x)$, i.e. the zero-jet of $f$. We shall prove that $a$ is the prolongation of a local transformation by showing $a^*\muiK=0$, for all $|K|<p$. Since $a$ agrees with $f$ at the zero order, we must have $a(x)=j^pf\resx$ and since (by definition of $a$) $R_{a(x)}\cdot s(x)=\sb(f(x))$, we have $R_f\cdot s=\sb$.

Let $v\in T_x\X$ be a tangent vector to $\X$ in the domain of $s$. We compute, using Lemma \ref{m},
\beq\label{a^*}
a^*\muiK(v)=\mb^*\muiK((\sb\comp f)_*v, s_*v)=\muiK(R_{s(x)^{-1}_*}\ib_*(\sb\comp f)_*v+L_{\sb(f(x))^{-1}_*}s_*v).
\eeq
First consider $\muiK(L_{\sb(f(x))^{-1}_*}s_*v)$. Since the determining equations for $\G_p$ are locally solvable, we can choose a local solution $\psi\in\G$ such that $j^p\psi\restrict{X_0}=\sb(f(x))^{-1}$ and because of Lemma \ref{same} we have
\beq\label{L_p}
\muiK(L_{\sb(f(x))^{-1}_*}s_*v)=\muiK(L_{\psi_*}s_*v)=(L_\psi^*\muiK)(s_*v).
\eeq
But $L_\psi^*\muiK$ is a \emph{right-invariant contact form} (of order $|K|$), since the left and right actions commute, and $L_\psi$ is a contact transformation on $\G_p$. Since the $\muiK$ are a basis for right-invariant contact forms on $\G_p$ and $s^*\muiK=(\sb\comp f)^*\muiK$ we also have $s^*(L_\psi^*\muiK)=(\sb\comp f)^*(L_\psi^*\muiK)$. Continuing (\ref{L_p}), and applying Lemma \ref{same} again, we get
\beq\label{sbf}
(L_\psi^*\muiK)(s_*v)=(L_\psi^*\muiK)((\sb\comp f)_*v)=\muiK(L_{\psi_*}(\sb\comp f)_*v)=\muiK(L_{\sb(f(x))^{-1}_*}(\sb\comp f)_*v).
\eeq
The last expression, according to Lemma \ref{gg}, is equal to
\[
\muiK(\one_*\sigma_*(\sb\comp f)_*v)-\muiK(R_{\sb(f(x))^{-1}_*}\ib_*(\sb\comp f)_*v)=-\muiK(R_{\sb(f(x))^{-1}_*}\ib_*(\sb\comp f)_*v),
\]
since $\one^*\muiK=0$, as $\one$ annihilates contact forms under pull-back. Plugging this into (\ref{a^*}) we arrive at
\beq\label{Ri}
a^*\muiK(v)=\muiK(R_{s(x)^{-1}_*}\ib_*(\sb\comp f)_*v)-\muiK(R_{(\sb(f(x))^{-1}_*}\ib_*(\sb\comp f)_*v).
\eeq
Now, as before, choose two local transformations $\p, \psi\in\G$ such that
\[
j^p\p\restrict{X_0}=s(x)^{-1},\quad j^p\psi\restrict{X_0}=\sb(f(x))^{-1}.
\]
According to Lemma \ref{same}, since $(\sb\comp f)_*v$ is $\tau$-vertical, we can write (\ref{Ri}) as
\[
\begin{aligned}
a^*\muiK(v)&=\muiK(R_{\p_*}\ib_*(\sb\comp f)_*v)-\muiK(R_{\psi_*}\ib_*(\sb\comp f)_*v)\\
&=\muiK(\ib_*(\sb\comp f)_*v)-\muiK(\ib_*(\sb\comp f)_*v)\\
&=0,
\end{aligned}
\]
since $R_\p^*\muiK=R_\psi^*\muiK=\muiK$.
\end{proof}

We easily obtain the following useful corollary.

\begin{corollary}\label{GcoCor}
Let $\G_p$ be formally integrable, let $X\in\X$ and let $\Lambda:\G_p\irestrict{X_0}\to\G_p\irestrict{X_0}$ be a map on a $\tau$-vertical fiber. Then $\Lambda$ preserves the Maurer-Cartan forms $\muiK$ (restricted to $\G_p$), $|K|<p$, if and only if $\Lambda=R_\p$ for some $\p\in \G$.
\end{corollary}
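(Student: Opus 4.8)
The plan is to deduce the statement from Theorem \ref{Gco} by turning the fibre map $\Lambda$ into a pair of sections. One direction is immediate: if $\Lambda=R_\p$ for some $\p\in\G$, then because the Maurer-Cartan forms are right-invariant we have $R_\p^*\muiK=\muiK$ for every $K$, so $\Lambda$ preserves them; one also checks (as in~(\ref{RLp})) that $R_\p$ genuinely carries the $\tau$-vertical fibre $\G_p\irestrict{X_0}$ to itself, altering only the source coordinate via $\p$. So assume conversely that $\Lambda$ is a local diffeomorphism with $\Lambda^*\muiK=\muiK$ for $|K|<p$, and let me aim to produce $\p=f$.

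The first and, I expect, only delicate step is to show that $\Lambda$ descends through the source map to a local diffeomorphism $f$ of $\X$, i.e. $\sigma\comp\Lambda=f\comp\sigma$. Here the order-zero forms $\mu^i$ are decisive, and they are among the preserved forms since $0<p$. On $\G_p\irestrict{X_0}$ the target is constant, so $dX^i=0$ there and the order-zero contact form restricts to $\upi=dX^i-X^i_jdx^j=-X^i_jdx^j$. Since the Jacobian $(X^i_j)$ is invertible and, by construction in \cite{olver05}, each $\mu^i$ is an invertible linear combination of the $\upi$, the forms $\{\mu^1,\dots,\mu^n\}$ span exactly the conormal codistribution $\langle dx^1,\dots,dx^n\rangle$ of the source foliation. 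Preservation of the $\mu^i$ then forces $\Lambda^*dx^i\in\langle dx^1,\dots,dx^n\rangle$, so $\Lambda$ maps source fibres into source fibres and hence descends to an analytic map $f$ on $\X$; it is a local diffeomorphism because $\Lambda$ is.

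With $f$ in hand, the rest is a clean application of the theorem. Pick any local section $s$ of $\G_p\irestrict{X_0}\overset{\sigma}{\to}\X$ and set $\sb:=\Lambda\comp s\comp f^{-1}$. Using $\sigma\comp\Lambda=f\comp\sigma$ one sees $\sb$ is again a section of $\G_p\irestrict{X_0}$, and by construction $\sb\comp f=\Lambda\comp s$, whence
\[
f^*\sb^*\muiK=(\sb\comp f)^*\muiK=(\Lambda\comp s)^*\muiK=s^*\Lambda^*\muiK=s^*\muiK,\qquad|K|<p.
\]
Thus the hypotheses of Theorem \ref{Gco} hold, giving $f\in\G$ and $R_f\cdot s=\sb$, that is $R_f(s(x))=\sb(f(x))=\Lambda(s(x))$ for all $x$. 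So $\Lambda$ and $R_f$ agree on the image of $s$. Because $\sigma$ is a surjective submersion, every point of $\G_p\irestrict{X_0}$ lies on some local section, and $f$ is the same for all of them (it is determined by $\Lambda$ alone in the previous step); hence $\Lambda=R_f$ with $\p:=f\in\G$.

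The only genuine obstacle is the descent to the base map $f$: once the order-zero Maurer-Cartan forms pin down $f$ through the source fibration, producing the comparison sections $s,\sb$ and invoking Theorem \ref{Gco} is routine, and the covering of the fibre by local sections upgrades the pointwise agreement to the global identity $\Lambda=R_\p$.
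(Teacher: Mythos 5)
Your proof is correct and follows essentially the same route as the paper: both directions reduce to Theorem \ref{Gco} by showing that $\Lambda$ carries sections of $\G_p\irestrict{X_0}\overset{\sigma}{\to}\X$ to sections, using that the order-zero Maurer-Cartan forms $\mu^i$ span the source-conormal codistribution $\langle dx^1,\dots,dx^n\rangle$ on the $\tau$-vertical fiber (the paper phrases this via $\mu^i=-\o^i$ and the nonvanishing of $\o^1\wedge\dots\wedge\o^n$ on the image of $\Lambda\comp s$). If anything, your version is slightly tighter at the final step: by extracting the base map $f$ from $\Lambda$ once and for all via the descent $\sigma\comp\Lambda=f\comp\sigma$, you guarantee that the \emph{same} $f$ serves for every section, whereas the paper invokes Theorem \ref{Gco} section-by-section and passes over the consistency of the resulting $\p$ across different sections.
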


\begin{proof}
If $\Lambda=R_\p$ for $\p\in\G$ it obviously preserves the $\muiK$. Conversely, assume $\Lambda^*\muiK=\muiK$ for all $|K|<p$. Then $\Lambda$ is a diffeomorphism and, since $\mu^i=-\oi$ on $\G_p\irestrict{X_0}$, we have $\Lambda^*\oi=\oi$. Let $s$ be any section of $\G_p\irestrict{X_0}\overset{\sigma}{\to}\X$ and denote by $\sb$ the image of $\Lambda(s)$. Then $\sb$ is also a section of $\G_p\irestrict{X_0}$ since 
\[
\Lambda^*\left(\o^1\wedge\dots\wedge\o^n\restrict{\sb}\right)=\o^1\wedge\dots\wedge\o^n\restrict{s}\neq0,
\]
(where $\restrict{s}$ denotes \emph{pull-back} onto the \emph{image} of $s$ in $\G_p$) and hence $\o^1\wedge\dots\wedge\o^n\restrict{\sb}\neq0$. Then, according to Theorem \ref{Gco}, $\Lambda$ agrees with $R_\p$, for some $\p\in\G$, when restricted to $s$ and since the section $s$ was arbitrary, we also must have $\Lambda=R_\p$.
\end{proof}

\begin{Rem}\label{mueqom}
We mention that by the recurrence formula, on each $\tau$-vertical fiber $\G\irestrict{X_0}$, we have
\[
 \mu^i=-\oi,
\]
and in the forthcoming application of the above results we shall usually work with the $\oi$.
\end{Rem}

There is an easy generalization of Lemma \ref{m} available that will provide the solution to the general equivalence of general sections of $\G_p$. To describe it we define the operator $\mathbf{t}$ taking tangent vectors in $T\G_p$ to $\tau$-vertical ones by setting their target coordinate to zero. That is, define 
\[
 \mathbf{t}(V)=V-\tau_*V.
\]
Note that for any path $\ep\mapsto\Phi(\ep)$ in $U_\mb\subset\G_p\times\G_p$, through the point $\Phi(0)=(g,h)\in U_\mb$, there is a path $\ep\mapsto\gamma(\ep)$ in $\X$ such that
\[
\tau\times\tau\left(\Phi(\ep)\right)=(\gamma(\ep), \gamma(\ep)),
\]
with $\gamma(0)=\tau(g)=\tau(h)$. Defining a path $\widetilde{\Phi}(\ep)$ to have the same components as $\Phi(\ep)$ except with
\[
\tau\times\tau\left(\widetilde{\Phi}(\ep)\right)=(\gamma(0), \gamma(0))=(\tau(g), \tau(h))
\]
fixed, it is easy to see that the paths $\mb(\Phi(\ep))$ and $\mb(\widetilde{\Phi}(\ep))$ are the \emph{same}. Hence, for any $V\oplus W\in TU_\mb$, we have $\mb_*(V,W)=\mb(\mathbf{t}(V),\mathbf{t}(W))$ (where we identify $TU_\mb$ with a subspace of $T\G_p\times T\G_p$ in the obvious way). Lemma \ref{m} now directly gives the following.
\begin{lemma}
 Let $V\oplus W\in TU_\mb$ be a tangent vector at $(g,h)$. Then
 \[
  \mb_*(V,W)=\mb_*(\mathbf{t}(V),\mathbf{t}(W))=R_{h^{-1}_*}\ib_*\mathbf{t}(V)+ L_{g^{-1}_*}\mathbf{t}(W).
 \]

 \end{lemma}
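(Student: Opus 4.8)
The plan is to read the lemma as an immediate consequence of the path-modification observation recorded just above its statement, together with Lemma \ref{m}. Concretely, I would first establish the left-hand equality $\mb_*(V,W)=\mb_*(\mathbf{t}(V),\mathbf{t}(W))$, and then feed the $\tau$-vertical vectors $\mathbf{t}(V)$ and $\mathbf{t}(W)$ directly into Lemma \ref{m}.

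For the first equality I would pick any path $\ep\mapsto\Phi(\ep)=(g(\ep),h(\ep))$ in $U_\mb$ with $\Phi(0)=(g,h)$ and velocity $V\oplus W$, and form the frozen-target path $\widetilde{\Phi}$ as in the preamble. The key point is that $\mb\comp\Phi=\mb\comp\widetilde{\Phi}$ as paths in $\G_p$. I would justify this from the explicit form of the groupoid product and inverse: writing $g=j^p\p\resx$ and $h=j^p\chi\restrict{y}$ with common target $X$, the product $g^{-1}\cdot h=j^p(\p^{-1}\comp\chi)\restrict{y}$ has source $y$, target $x=\sigma(g)$, and higher jet coordinates obtained from the jets of $\chi$ at $y$ and of $\p^{-1}$ at $X$ by the chain rule and the jet-inversion formulas. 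In those algebraic expressions the common base value $X$ figures only as the point at which derivatives are evaluated and never appears explicitly; a first-order check in one variable makes this transparent, since there $g^{-1}\cdot h=(y,x,Y_1/X_1)$, manifestly independent of $X$. Hence varying the shared target coordinate along the path leaves $\mb\comp\Phi$ unchanged. Since $\widetilde{\Phi}(0)=(g,h)$ and the velocity of $\widetilde{\Phi}$ at $\ep=0$ is exactly $\mathbf{t}(V)\oplus\mathbf{t}(W)$ (the target components having been zeroed, all source and jet components retained), differentiating $\mb\comp\Phi=\mb\comp\widetilde{\Phi}$ at $\ep=0$ yields $\mb_*(V,W)=\mb_*(\mathbf{t}(V),\mathbf{t}(W))$.

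For the second equality I would observe that $\mathbf{t}(V)$ is $\tau$-vertical at $g$ and $\mathbf{t}(W)$ is $\tau$-vertical at $h$, since $\tau_*\mathbf{t}(V)=\tau_*V-\tau_*V=0$ and likewise for $W$, and that $(g,h)\in U_\mb$ by hypothesis. Thus Lemma \ref{m} applies verbatim with $V,W$ replaced by $\mathbf{t}(V),\mathbf{t}(W)$, giving $\mb_*(\mathbf{t}(V),\mathbf{t}(W))=R_{h^{-1}_*}\ib_*\mathbf{t}(V)+L_{g^{-1}_*}\mathbf{t}(W)$, which is the asserted right-hand side.

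The only genuinely non-formal step is the claim that $\mb$ depends on its arguments only through their $\tau$-vertical parts, and I expect this to be the main point to record carefully: ``setting the target coordinate to zero'' is a coordinate-dependent operation, so one must check that the resulting identity is coordinate-free. The cleanest route is the observation above, namely that the groupoid product and inverse express the output jet coordinates algebraically in the input jet coordinates without explicit dependence on the common target value, so that freezing that value does not alter $g^{-1}\cdot h$. Once this is in hand, the lemma is a one-line appeal to Lemma \ref{m}.
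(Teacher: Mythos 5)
Your proof is correct and follows the paper's own route exactly: the paper likewise establishes the first equality by freezing the shared target coordinate along a representing path $\Phi(\ep)$ (your observation that the groupoid product and inverse express $g^{-1}\cdot h$ algebraically in the source and jet coordinates, with the common target value appearing only as an evaluation point, is precisely the justification behind the paper's ``it is easy to see that the paths $\mb(\Phi(\ep))$ and $\mb(\widetilde{\Phi}(\ep))$ are the same''), and then concludes by applying Lemma \ref{m} verbatim to the $\tau$-vertical vectors $\mathbf{t}(V)$ and $\mathbf{t}(W)$. The only difference is one of detail, in your favor: you actually record the argument for $\mb\comp\Phi=\mb\comp\widetilde{\Phi}$, which the paper leaves as a bare assertion.
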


Now, when two sections, $s$ and $\sb$, of $\G_p$ do not have fixed target coordinates, $\tau(s(x))=\tau(\sb(\xb))=X_0$, any equivalence map $R_\p$ between them must preserve the invariants
\[
I(x):=\tau(s(x))\quad\text{and}\quad\bar{I}(\xb):=\tau(\sb(\xb)).
\]
Conversely, given a local map $f:\X\to\X$ that preserves the pulled-back Maurer-Cartan forms 
\[
f^*\sb^*\muiK=s^*\muiK
\]
\emph{and} the invariants $I$ and $\bar{I}$,
\[
\bar{I}(f(x))=I(x),
\]
our proof of Theorem \ref{Gco} goes through essentially unchanged to prove that $f\in\G$ and $R_f$ maps $s$ to $\sb$; we just have to replace $(\sb\comp f)_*v$ and $s_*v$ by $\mathbf{t}\left((\sb\comp f)_*v\right)$ and $\mathbf{t}\left(s_*v\right)$ after the last equality in (\ref{a^*}) and throughout. Similarly we can prove the generalization of Corollary \ref{GcoCor}.

\begin{theorem}\label{Gco_gen}
Let $s$ and $\sb$ be two sections of $\G_p\to\X$ and let $f$ be a local diffeomorphism on $\X$ such that
\begin{align*}
f^*\sb^*\muiK&=s^*\muiK,\quad |K|<p,\quad\text{and}\\
f^*\sb^*\tau&=s^*\tau.
\end{align*}
Then $f\in\G$ and $R_f\cdot s=\sb$.
\end{theorem}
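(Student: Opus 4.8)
The plan is to imitate the proof of Theorem \ref{Gco} essentially verbatim, inserting the projection $\mathbf{t}$ exactly where the fixed-target argument relied on $\tau$-verticality, and to replace the appeal to Lemma \ref{m} by its generalization stated immediately above. First I would observe that the second hypothesis $f^*\sb^*\tau=s^*\tau$ says precisely that $\tau(\sb(f(x)))=\tau(s(x))$ for every $x$, so the pair $(\sb(f(x)),s(x))$ lies in $U_\mb$ and the section
\[
a(x):=\sb(f(x))^{-1}\cdot s(x)=\mb\comp(\sb(f(x)),s(x))
\]
is well defined. As in Theorem \ref{Gco}, this $a$ agrees with $j^0f$ at the zeroth order, so once I show $a^*\muiK=0$ for all $|K|<p$ the basis property of the Maurer--Cartan forms forces $a=j^pf$ to be a genuine prolongation; hence $f\in\G$, and since $R_{a(x)}\cdot s(x)=\sb(f(x))$ by construction, we conclude $R_f\cdot s=\sb$.

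For the computation I would fix $x$ and $v\in T_x\X$. Where the proof of Theorem \ref{Gco} used Lemma \ref{m} to rewrite $a^*\muiK(v)$ as in (\ref{a^*}), I instead invoke the generalization of Lemma \ref{m} just above, with $g=\sb(f(x))$ and $h=s(x)$, obtaining
\[
a^*\muiK(v)=\muiK\!\left(R_{s(x)^{-1}_*}\ib_*\mathbf{t}\big((\sb\comp f)_*v\big)+L_{\sb(f(x))^{-1}_*}\mathbf{t}\big(s_*v\big)\right).
\]
The only change from (\ref{a^*}) is that the pushforwards $(\sb\comp f)_*v$ and $s_*v$, which are no longer $\tau$-vertical, have been replaced by their $\tau$-vertical projections $\mathbf{t}((\sb\comp f)_*v)$ and $\mathbf{t}(s_*v)$. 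Since these projections are genuinely $\tau$-vertical, Lemmas \ref{same} and \ref{gg} apply to them unchanged, and the remainder of the argument transfers word for word: choosing local solutions $\p,\psi\in\G$ that realize $s(x)^{-1}$ and $\sb(f(x))^{-1}$ as jets, Lemma \ref{same} converts the groupoid actions into the transformation actions, Lemma \ref{gg} collapses the $L$-term against the identity section (which annihilates contact forms under pull-back), and $R_\p^*\muiK=R_\psi^*\muiK=\muiK$ makes the two surviving $R$-terms cancel, exactly as in the passage from (\ref{Ri}) to $a^*\muiK(v)=0$.

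The single step that genuinely uses the new target hypothesis — and which I expect to be the crux — is the swap inside the $L$-term, where one replaces $\mathbf{t}(s_*v)$ by $\mathbf{t}((\sb\comp f)_*v)$. In the fixed-target case this was immediate from $s^*\muiK=(\sb\comp f)^*\muiK$, since the vectors were already $\tau$-vertical. Here I must instead show $(L_\psi^*\muiK)(\mathbf{t}(s_*v))=(L_\psi^*\muiK)(\mathbf{t}((\sb\comp f)_*v))$. Writing $\mathbf{t}(V)=V-\tau_*V$ and recalling that $L_\psi^*\muiK$ is again a right-invariant contact form, hence a combination of the $\muiK$, the matching $s^*\muiK=(\sb\comp f)^*\muiK$ disposes of the $V$-parts, while the correction vectors $\tau_*s_*v$ and $\tau_*(\sb\comp f)_*v$ are literally equal: the hypothesis $f^*\sb^*\tau=s^*\tau$ gives $\tau\comp s=\tau\comp\sb\comp f$ and hence $\tau_*s_*=\tau_*(\sb\comp f)_*$. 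Thus the second hypothesis is precisely what propagates the Maurer--Cartan matching from the full pushforwards to their $\tau$-vertical projections; beyond this verification the generalized lemma reduces the whole statement to the already-established Theorem \ref{Gco}. The analogous generalization of Corollary \ref{GcoCor} would then follow by the same substitution.
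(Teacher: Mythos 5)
Your proposal is correct and takes essentially the same route as the paper: the paper's own proof of Theorem \ref{Gco_gen} consists precisely of the remark that, armed with the generalized form of Lemma \ref{m}, one reruns the proof of Theorem \ref{Gco} with $(\sb\comp f)_*v$ and $s_*v$ replaced by $\mathbf{t}\left((\sb\comp f)_*v\right)$ and $\mathbf{t}\left(s_*v\right)$ throughout, which is exactly what you do, and you in fact supply more detail than the paper at the one genuinely new step (the swap inside the $L$-term). One small caution: the lifted correction vectors sit at the distinct points $s(x)$ and $\sb(f(x))$ of $\G_p$, so ``literally equal'' should be supplemented by the observation that a right-invariant contact form such as $L_\psi^*\muiK$ pairs with purely-target vectors only through its order-zero components and right-orbit-constant coefficients, both of which match because $s(x)$ and $\sb(f(x))$ share the target $\tau(s(x))=\tau(\sb(f(x)))$ and lie on the same right-orbit.
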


\begin{corollary}\label{Gco_gen_cor}
Let $\G_p$ be formally integrable, let $X\in\X$ and let $\Lambda:\G_p\to\G_p$. Then $\Lambda$ preserves the Maurer-Cartan forms $\muiK$ (restricted to $\G_p$), $|K|<p$, and the target map $\tau$ if and only if $\Lambda=R_\p$ for some $\p\in \G$.
\end{corollary}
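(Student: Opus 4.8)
\emph{Plan.} I would prove the two implications separately, treating the forward direction as immediate and modeling the converse on the proof of Corollary~\ref{GcoCor}, with Theorem~\ref{Gco_gen} now playing the role that Theorem~\ref{Gco} played there. For the direction $\Lambda=R_\p\Rightarrow\Lambda$ preserves everything: right-invariance of the Maurer-Cartan forms gives $R_\p^*\muiK=\muiK$, and since the groupoid product $h\mapsto h\cdot j^p\p^{-1}$ changes only the source coordinate of a groupoid element while leaving its target untouched, we have $\tau\comp R_\p=\tau$. So both hypotheses hold.

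For the converse, suppose $\Lambda^*\muiK=\muiK$ for $|K|<p$ and $\tau\comp\Lambda=\tau$. The strategy is to reduce to Theorem~\ref{Gco_gen} applied to an arbitrary section. First I would argue, as in Corollary~\ref{GcoCor}, that $\Lambda$ is a local diffeomorphism carrying sections of $\G_p\to\X$ to (reparametrized) sections. Granting this, pick an arbitrary section $s$, let $\sb$ be the section whose image is $\Lambda(s)$, and define the base map $f:=\sigma\comp\Lambda\comp s$, so that $\sb\comp f=\Lambda\comp s$. Pulling back the Maurer-Cartan forms along $s$ gives $(\sb\comp f)^*\muiK=s^*\Lambda^*\muiK=s^*\muiK$, i.e. $f^*\sb^*\muiK=s^*\muiK$, which is the first hypothesis of Theorem~\ref{Gco_gen}. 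The target condition supplies the second: $\tau(\sb(f(x)))=\tau(\Lambda(s(x)))=\tau(s(x))$, that is $f^*\sb^*\tau=s^*\tau$. Theorem~\ref{Gco_gen} then yields $f\in\G$ and $R_f\cdot s=\sb$, so $\Lambda$ agrees with $R_f$ on the image of $s$. Finally, for a fixed groupoid element $g$ the jet $j^pf\resx=g^{-1}\cdot\Lambda(g)$ is determined by $g$ and $\Lambda(g)$ alone, independently of the section chosen through $g$, so the transformations produced for different sections patch into a single $\p\in\G$ with $\Lambda=R_\p$.

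The main obstacle is the step I invoked in passing: showing that $\Lambda$ sends sections to sections and that the resulting $\p$ is genuinely global. In Corollary~\ref{GcoCor} the ``sends sections to sections'' claim was free, since on a \emph{fixed} $\tau$-vertical fiber one has $\mu^i=-\oi$ (Remark~\ref{mueqom}), giving $\Lambda^*\oi=\oi$ and hence $\Lambda^*(\o^1\wedge\cdots\wedge\o^n)=\o^1\wedge\cdots\wedge\o^n\neq0$ on each section's image. Here the target varies, so the zeroth-order Maurer-Cartan forms differ from $-\oi$ by terms along $d\tau$, and preservation of the horizontal coframe is no longer automatic from preservation of the $\muiK$ alone. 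The cleanest way to secure it, I expect, is to use $\tau\comp\Lambda=\tau$ at the outset: this forces $\Lambda$ to preserve each $\tau$-vertical fiber $\G_p\irestrict{X}$, so Corollary~\ref{GcoCor} applies fiberwise and gives $\Lambda\restrict{\G_p\irestrict{X}}=R_{\p^{(X)}}$ for each $X$. The genuinely new content is then the gluing — showing the jets $j^p\p^{(X)}$ do not depend on $X$, equivalently that $\sigma\comp\Lambda$ factors through $\sigma$ — which is exactly what allows the fiberwise frames to assemble into one $\p$, and which the target-preservation hypothesis (together with the source descent carried by the preserved horizontal forms) is designed to provide.
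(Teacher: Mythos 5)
Your skeleton is the paper's: the paper gives no separate argument for this corollary beyond the sentence ``Similarly we can prove the generalization of Corollary~\ref{GcoCor},'' so the intended proof is exactly your main route --- rerun the proof of Corollary~\ref{GcoCor} with Theorem~\ref{Gco_gen} in place of Theorem~\ref{Gco} --- and your verification of the two hypotheses of Theorem~\ref{Gco_gen} along a section is correct. The problem is that you defer precisely the step where the new hypothesis must do its work, and then only gesture at it. The step you grant (``$\Lambda$ carries sections to sections'') has a one-line proof that you stop short of: on $\G_p$ the zero-order Maurer--Cartan forms satisfy $\tau^*dX^i=\oi+\mu^i$ (this is the identity behind Remark~\ref{mueqom}; restricting to a $\tau$-fiber kills $\tau^*dX^i$ and gives $\mu^i=-\oi$). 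Hence
\[
\Lambda^*\oi=\Lambda^*\tau^*dX^i-\Lambda^*\mu^i=(\tau\comp\Lambda)^*dX^i-\mu^i=\tau^*dX^i-\mu^i=\oi,
\]
using \emph{both} hypotheses, after which $\Lambda^*\left(\o^1\wedge\dots\wedge\o^n\right)\neq0$ on section images exactly as in the proof of Corollary~\ref{GcoCor}. You actually notice the relevant fact (``the zeroth-order Maurer--Cartan forms differ from $-\oi$ by terms along $d\tau$'') but instead of applying $\Lambda^*$ and using $\tau\comp\Lambda=\tau$ to cancel those terms, you detour into a fiberwise application of Corollary~\ref{GcoCor} whose gluing you leave entirely open; the identity above makes that detour unnecessary.

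Your patching argument also needs repair, in two ways. First, the formula is inverted: from $R_f\cdot s=\sb$ and equation (\ref{jpf}) one gets $j^pf\resx=\Lambda(g)^{-1}\cdot g$, not $g^{-1}\cdot\Lambda(g)$ (the latter is $j^pf^{-1}\restrict{f(x)}$). Second, and more seriously, independence of the section chosen through one \emph{fixed} $g$ is not the issue; what must be shown is that $\Lambda(g)^{-1}\cdot g=\Lambda(g')^{-1}\cdot g'$ for \emph{distinct} elements $g\neq g'$ in the same source fiber $\sigma^{-1}(x)$, since otherwise the jets do not assemble into a single $\p$. This is where the preserved horizontal coframe does the work: writing $\oi=X^i_j\,dx^j$, the identity $\Lambda^*\oi=\oi$ together with invertibility of the matrix $\bigl(X^i_j\comp\Lambda\bigr)$ forces every $d\bigl(x^j\comp\Lambda\bigr)$ to lie in the span of the $dx^k$, so $\sigma\comp\Lambda$ is constant on source fibers and descends: $\sigma\comp\Lambda=\p\comp\sigma$ for a single local map $\p$ of $\X$. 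Every section-wise map $f=\sigma\comp\Lambda\comp s$ produced by Theorem~\ref{Gco_gen} therefore coincides with $\p$ on its domain, so $\p\in\G$, all the jets $\Lambda(g)^{-1}\cdot g=j^p\p\restrict{\sigma(g)}$ agree, and $\Lambda=R_\p$. With these two repairs your argument is complete and coincides with the paper's intended one.
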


\end{subsection}

\begin{subsection}{$G$-structures and partial moving frames}\label{partmaur}

Let a Lie pseudo-group $\G$ of local diffeomorphisms on $\X$ be determined by a set of first order equations (this is purely a simplifying, not a necessary, assumption). Assume the action of $\G$ is extended to a space $\U$ such that the action in the variables $u\in\U$ depends only on the first order jets of transformations from $\G$. As before, we ask when two local sections in $\E=\X\times\U\to\X$ are congruent under a transformation from $\G$. For our (local) purposes, it will be sufficient to consider sections whose images are the graphs of locally defined functions $u:\X\to\U$. For such a locally defined function $u$, we shall denote the section $x\mapsto (x,u(x))$ by $j^0u$ and its higher jets by $j^pu$, $0\leq p\leq\infty$. The equivalence problem for sections in $\E$ then asks, for two sections of $\E$, $j^0u$ and $j^0\ub$, if there is a $\p\in\G$ such that
\beq\label{ssb}
j^1\p\resx\cdot j^0u\resx=j^0\ub\restrict{\p(x)},
\eeq
for all $x$ in some open subset of $\text{dom}~u$, and how do we characterize all such congruences?
It will be convenient to have a running example during this section, but this next example also introduces the key idea. 

\bex\label{Qex1}
Let $\G$ be the Lie pseudo-group of contact-transformations in the variables $z=(x,u,u_x)=(x,u,p)$ determined by the differential equations
\beq\label{def1}
X_u=X_p=U_p=0,\quad P=\frac{U_x+pU_u}{X_x},\quad P_p=\frac{U_u}{X_x},
\eeq
extended to act on sections in $\R^3\times\R\to\R^3$, in coordinates $(x,u,p,q)=(z,q)$, by 
\[
j^0q\resz\mapsto \left(\p(z), Q(j^1\p\resz, j^0q\resz)\right),
\]
where the action on the $q$-variable is
\[
Q(j^1\p\resz, j^0q\resz)=\frac{P_x+pP_u+qP_p}{X_x}.
\]
Here, $j^0q$ is the section determined by the zero jet of a local function $q:\R^3\to\R$. Now, for the moment assume that the determining equations (\ref{def1}) for $\G$ are locally solvable (This can in fact be easily proven via Cartan-K\aa hler). Then Theorem \ref{Gco} says that two sections, $s$ and $\sb$, with fixed target coordinates, say $Z=(X,U,P)=0$, of $\G_1$ are congruent under the right action of a $\psi\in\G$ if and only if $\psi$ preserves the pull-backs of $\o^x$, $\o^u$ and $\o^p$ (Recall Remark \ref{mueqom}). In all applications we consider it will be sufficient to work with sections with fixed target coordinates, not needing the more general result of Theorem \ref{Gco_gen} and so we restrict this discussion to this case. This is just for simplification; all our results extend trivially to the more general \emph{non-transitive} case (but the equivalence map must then preserve the target coordinates $Z$). 

But what has to happen to guarantee that a local transformation $\psi\in\G$ maps a local section of $\E$ to another such section? Letting these sections be the jets of two locally defined functions $q_0, \bar{q}_0:\X\to\U$, we claim that this is the case if and only if \emph{there exists} a section $s$ of $\G_1$, with $R_\psi\cdot s=\sb$, such that 
\beq\label{Qj1}
Q(s\resz, j^0q_0\resz)=Q(\sb\restrict{\psi(z)}, j^0\bar{q}_0\restrict{\psi(z)}).
\eeq

To see why, notice that 
\begin{alignat*}{3}
{}&Q(s\resz\cdot j^1\psi^{-1}\restrict{\psi(z)}, Q(j^1\psi\resz, j^0q_0\resz))\hspace{30pt}&&\\
{}=&Q(s\resz, j^0q_0\resz) &&\text{by right invariance of $Q$}\\
{}=&Q(s\resz\cdot j^1\psi^{-1}\restrict{\psi(z)}, j^0\bar{q}_0\restrict{\psi(z)}) &&\text{by (\ref{Qj1})}.
\end{alignat*}
Comparing the first and last equations we can see that, if $Q(g,j^0q)$ is full rank in the $q$ variable, we have
\[
Q(j^1\psi\resz, j^0q\resz)=j^0\bar{q}\restrict{\psi(z)}.
\]
But this means that $\psi$ also maps $j^0q_0$ to $j^0\bar{q}_0$.
\eex

Note that, for the zero order lifted invariant $Q$, we have $Q(\one, j^0q)=q$ which is of maximal rank in $q$. Therefore, at least close to the identity section, $Q(g,j^0q)$ is full rank in $q$. This fact and the computations at the end of the last example easily generalize to prove the following.

\begin{theorem}\label{zfund}
Let $\G$ be a horizontal action on a trivial bundle $\E=\X\times\U\to\X$ whose $q\th$ order defining equations are formally integrable and assume the zero order lifted invariants $U=\lambda(u)$ are full rank in the pseudo-group parameters. Given two local functions, $u, \ub:\X\to\U$, there is a map $\p\in\G$ taking $j^0u$ to $j^0\ub$ if and only if there exists a sectiono of some target fiber $\G_q\irestrict{X_0}\overset{\sigma}{\to}\X$, $s$, such that $$U\restrictbig{(s\resx, j^0u\resx)}=U\restrictbig{(\sb\restrict{\psi(x)}, j^0\ub\restrict{\psi(x)})}.$$
\end{theorem}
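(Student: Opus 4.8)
The plan is to lift the argument of Example~\ref{Qex1} verbatim to a general horizontal action, the only structural input being the right-invariance of the zero-order lifted invariant $U=\lambda(u)=\taut^*u$. By (\ref{Rtau}) we have $R_\psi^*U=U$ for every $\psi\in\G$, so $U$ takes equal values at any pair and at its image under the regularized action $R_\psi$. The second ingredient, recorded just before the statement, is the normal form $U\restrictbig{(\one, j^0u)}=u$, which shows that $U(g,\cdot)$ is of full rank in the fibre variable $u$ for $g$ near the identity section.

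For the forward implication I would take $\p\in\G$ with $j^q\p\resx\cdot j^0u\resx=j^0\ub\restrict{\p(x)}$, choose any local section $s$ of $\G_q\irestrict{X_0}\overset{\sigma}{\to}\X$ (which exists over a suitable chart by formal integrability and local solvability), and set $\sb:=R_\p\cdot s$. Because the groupoid component of $R_\p$ right-multiplies $s\resx$ by $j^q\p^{-1}\restrict{\p(x)}$, the target coordinate $X_0$ is preserved, so $\sb$ is again a section of $\G_q\irestrict{X_0}$. Right-invariance of $U$ then gives
\[
U\restrictbig{(s\resx,\, j^0u\resx)}
=U\restrictbig{R_\p\cdot(s\resx,\, j^0u\resx)}
=U\restrictbig{(\sb\restrict{\p(x)},\, j^q\p\resx\cdot j^0u\resx)}
=U\restrictbig{(\sb\restrict{\p(x)},\, j^0\ub\restrict{\p(x)})},
\]
which is the asserted level-set equality with $\psi=\p$.

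For the converse I would start from congruent sections $s$ and $\sb=R_\psi\cdot s$ of $\G_q\irestrict{X_0}$ satisfying the level-set condition, and show that the very map $\psi$ already carries $j^0u$ to $j^0\ub$. Writing $\tilde u$ for the fibre value of the pushed-forward jet, so that $j^q\psi\resx\cdot j^0u\resx=j^0\tilde u\restrict{\psi(x)}$, invariance of $U$ under $R_\psi$ applied to the pair $(s\resx, j^0u\resx)$ yields $U\restrictbig{(\sb\restrict{\psi(x)},\, j^0\tilde u\restrict{\psi(x)})}=U\restrictbig{(s\resx,\, j^0u\resx)}$. Combined with the hypothesis this gives
\[
U\restrictbig{(\sb\restrict{\psi(x)},\, j^0\tilde u\restrict{\psi(x)})}
=U\restrictbig{(\sb\restrict{\psi(x)},\, j^0\ub\restrict{\psi(x)})},
\]
and cancelling the common groupoid element $\sb\restrict{\psi(x)}$ forces $\tilde u=\ub$ at $\psi(x)$, i.e. $j^q\psi\resx\cdot j^0u\resx=j^0\ub\restrict{\psi(x)}$. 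Taking $\p=\psi$ completes the equivalence.

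The step I expect to be the crux is the final cancellation: it needs $U(g,\cdot)$ to be injective in $u$ not merely at $g=\one$ but at the particular frame $g=\sb\restrict{\psi(x)}$ produced above. I would secure this from the standing hypothesis that $U=\lambda(u)$ is full rank in the pseudo-group parameters together with the normal form $U\restrictbig{(\one, j^0u)}=u$, arguing that the rank cannot drop along the connected family of frames in play. This hypothesis is moreover exactly what makes the reduction non-vacuous, since full rank in the parameters is what allows $s$ and $\sb$ to be confined to a common level set $U=\mathrm{const}$; once that is arranged, the remaining congruence of the groupoid sections $s$ and $\sb$ is precisely the problem solved by Theorem~\ref{Gco}.
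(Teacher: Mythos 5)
Your proposal is correct and takes essentially the same route as the paper, whose proof is exactly the computation of Example \ref{Qex1} generalized: your chain of equalities (right-invariance of $U$, the level-set hypothesis, then cancellation of the common groupoid element $\sb\restrict{\psi(x)}$ using full rank of $U$ in the fibre variable) mirrors the paper's three-line display, and your forward direction is the same trivial application of right-invariance with $\sb:=R_\p\cdot s$. The one crux you flag, full rank of $U(g,\cdot)$ in $u$ at frames away from the identity, is treated with the same looseness in the paper itself (which only argues "close to the identity section" via $U\restrictbig{(\one,j^0u)}=u$), so this is a shared gap rather than a divergence of method; it can in fact be closed by noting that each groupoid element is the jet of an honest transformation of $\E$, which acts invertibly on the fibre.
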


Continuing Example \ref{Qex1}, we next introduce the fundamental idea underlying the moving frame.

\bex\label{Qex2}
We are given two local functions $q_0$ and $\qb_0$ and wish to characterize all equivalence maps, $\p\in\G$, between their graphs. We can do this, according to Theorem \ref{zfund}, by constructing two sections, $s$ and $\sb$, of $\G_1$ and requiring that $\p$, or, rather $R_\p$, maps $s$ to $\sb$ all the while preserving the lifted invariant 
\[
Q=\frac{P_x+pP_u+qP_p}{X_x},
\]
restricted to $(s,j^0q_0)$ and $(\sb, j^0\qb_0)$. But, and here is the key observation, since we get to choose the sections $s$ and $\sb$ ourselves, we might as well choose them to lie in the subset of $\G_1$ where $Q=0$. Then, any map between $s$ and $\sb$ \emph{automatically} preserves the restricted lifted invariant $Q$. Taking a closer look at this, we, for example, require that the components of $s$ satisfy
\beq\label{specu}
Q=\frac{P_x+pP_u+q_0P_p}{X_x}=0~~\iff~~P_x=-pP_u-q_0P_p,
\eeq
and similarly for $\sb$ and $\qb_0$. Referring back to our fundamental Theorem \ref{Gco}, the equivalence problem has now been reduced to finding a local transformation $f$ of $\X$, and two sections, $s$ and $\sb$, of the \emph{partial moving frame} determined by $X=U=P=Q=0$, such that
\[
f^*\sb^*\oi=s^*\oi.
\] 
The recurrence formula will hold all the relevant information of the structure of the Maurer-Cartan forms thereon. For example, on $Q=0$, we have
\[
0=dQ=\mu^p_x+Q_X\ox+Q_U\ou+Q_P\op.
\]

\eex

In order to capture the above ideas in cleaner notation we establish the following definition.

\begin{definition}\label{def:bu}
Let a partial moving frame (of order $p$) $\B_p\to\S$ have domain of definition $\S\subset J^r(\E)$ (for some $r$) and let $u$ be a local function $u:\X\to\U$ whose $r$ jet, $j^ru$, lands in $\S$. We denote the image, in $J^r(\E)$, of the section $j^ru$ by im($j^ru$) and denote by $\Bu_p\to\text{im}(j^ru)$ the restriction of $\B_p$ to source coordinates in im($j^ru$). Composing the bundle projection $\Bu_p\to\text{im}(j^ru)$ with the natural projection $\pi^r:J^r(\E)\to\X$ we will often identify $\Bu_p\to\text{im}(j^ru)$ with the resulting bundle $\Bu_p\to\X$. Note that $\Bu_p\to\X$ is a subbundle of $\G_p$.
\end{definition}

Generalizing the above example, we have the following corollary to Theorem \ref{zfund}.

\begin{corollary}\label{zfundcor}
Let the set-up be the same as in Theorem \ref{zfund} and let $\B_p\to\S$, for $p\geq q$, be a $p\th$ order partial moving frame with domain of definition $\S\subset J^r(\E)$ (for some $r\in\mathbb{N}$) on which we have $U=\lambda(u)=\text{constant}$. Then the graphs of two local functions, $u$ and $\ub$, such that im($j^ru$) and im($j^r\ub$) are in $\S$, are congruent under $\G$ if and only if there are two sections, $s$ of $\Bu_p\to\X$ and $\sb$ of $\Bub_p\to\X$, congruent under the right action of $\G$. 
\end{corollary}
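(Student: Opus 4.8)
The plan is to read off this corollary from Theorem \ref{zfund}, exploiting the simplification highlighted in Example \ref{Qex2}: once the sections are forced to lie in the partial moving frame $\B_p$, on which the zero order lifted invariant $U=\lambda(u)$ is pinned to a fixed constant $c$, the lifted-invariant matching condition in Theorem \ref{zfund} becomes automatic, and congruence of the graphs of $u$ and $\ub$ collapses to congruence of the two sections. Throughout I would use that, since $p\geq q$, the projection $\nu^p_q:\Gt_p\to\Gt_q$ carries a section of $\Bu_p\to\X$ to a section of a target fiber $\G_q\irestrict{X_0}\overset{\sigma}{\to}\X$ (the target coordinates being among the quantities normalized to constants on $\B_p$, as in the transitive setting discussed after Example \ref{Qex1}), and that $U$, being of order zero, is already visible at this level and equals the constant $c$ fixed on $\B_p$. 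The full-rank hypothesis on $U$ needed by Theorem \ref{zfund} is inherited from the shared set-up.

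First I would do the backward direction. Assume $s$ is a section of $\Bu_p\to\X$ and $\sb$ a section of $\Bub_p\to\X$ congruent under the right action, say $R_\p\cdot s=\sb$ for some $\p\in\G$. Because $s$ and $\sb$ take values in $\B_p$, on which $U=\lambda(u)=c$, we have
\[
U\restrictbig{(s\resx,\,j^0u\resx)}=c=U\restrictbig{(\sb\restrict{\p(x)},\,j^0\ub\restrict{\p(x)})}.
\]
Projecting $s,\sb$ to order $q$ gives congruent sections of a target fiber on which $U$ agrees, so Theorem \ref{zfund} supplies a transformation in $\G$ taking $j^0u$ to $j^0\ub$; that is, the graphs are congruent.

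For the forward direction I would argue directly rather than through Theorem \ref{zfund}. Suppose $\p\in\G$ realizes the congruence of graphs, $j^1\p\resx\cdot j^0u\resx=j^0\ub\restrict{\p(x)}$. By prolongation $\p$ sends $j^ru$ to $j^r\ub$, hence maps $\text{im}(j^ru)$ into $\S$, landing in $\text{im}(j^r\ub)$. I would pick any section $s$ of $\Bu_p\to\X$ (nonempty, since $j^ru$ lands in $\S$ and a partial moving frame has nonempty fibers over $\S$) and \emph{define} $\sb:=R_\p\cdot s$. By construction $s$ and $\sb$ are congruent under the right action, so the only thing to verify is that $\sb$ is a section of $\Bub_p\to\X$; this is precisely the right-equivariance of $\B_p$, the new base point $j^p\p\resx\cdot j^ru\resx=j^r\ub\restrict{\p(x)}$ lying in $\text{im}(j^r\ub)\subset\S$.

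The main obstacle is exactly this last verification. The right-equivariance in Definition \ref{parmov} is phrased for $\p\in\G^\S$, i.e.\ for diffeomorphisms preserving \emph{all} of $\S$, whereas our $\p$ is known only to carry the particular jets $\text{im}(j^ru)$ to $\text{im}(j^r\ub)$. The point I would argue carefully is that the equivariance formula
\[
R_\p\cdot(j^p\psi\resx,\,j^ru\resx)=(j^p\psi\resx\cdot j^p\p^{-1}\restrict{\p(x)},\,j^p\p\resx\cdot j^ru\resx)
\]
only ever evaluates the base point along $\text{im}(j^ru)$, so membership in $\B_p$ is preserved as long as $\p$ maps this image into $\S$ --- which it does. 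For locally $\G$-invariant $\S$ this is immediate, since then $\G^\S=\G$ locally (Remark \ref{rem:GS}); for the singular domains $\S$ that motivate the generalized Definition \ref{parmov}, one restricts to the sub-bundle of $\B_p$ over $\text{im}(j^ru)$ and uses that $\p$ carries this orbit into the domain of definition, so that invariance of $\B_p$ along it suffices to conclude $\sb\in\Bub_p$.
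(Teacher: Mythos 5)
Your overall strategy is the one the paper intends — the corollary is stated without an explicit proof, as the direct generalization of Example \ref{Qex2} via Theorem \ref{zfund} — and your backward direction is correct: because $U$ is pinned to a constant on $\B_p$, any two right-congruent sections with values in $\Bu_p$ and $\Bub_p$ satisfy the $U$-matching condition for free, and Theorem \ref{zfund} then yields congruence of the graphs.

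The gap is in the forward direction, at precisely the step you flagged. Definition \ref{parmov} makes right-equivariance of $\B_p$ a property of the action of $\G^\S$ \emph{only}; it gives no control over $R_\p$ for a $\p\notin\G^\S$, even along $\text{im}(j^ru)$. Your claim that ``membership in $\B_p$ is preserved as long as $\p$ maps this image into $\S$'' is not a consequence of that definition — it is exactly the statement needing proof, and for an abstract partial moving frame it can fail: if, say, $\G^\S$ is trivial, then \emph{every} fibered subspace of $\Gt^\S_p\to\S$ qualifies as a partial moving frame, and its fibers over $\text{im}(j^ru)$ and over $\text{im}(j^r\ub)$ can be chosen independently, so that $R_\p\cdot s$ misses $\Bub_p$ entirely even though the graphs are congruent under $\p$. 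What actually closes the gap — and what the paper tacitly relies on — is that the partial moving frames in play are common level sets of lifted invariants (this is how they are defined in the Introduction and constructed order-by-order in Section \ref{sec:equi}), and lifted invariants are invariant under the regularized action $R_\psi$ of \emph{every} $\psi\in\G$ wherever that action is defined (the computation leading to (\ref{Rtau})), with no requirement that $\psi$ preserve $\S$. With that structure, $R_\p\cdot s$ has the same lifted-invariant values as $s$, and its base points $j^p\p\resx\cdot j^ru\resx=j^r\ub\restrict{\p(x)}$ lie in $\text{im}(j^r\ub)\subset\S$, so $\sb:=R_\p\cdot s$ is indeed a section of $\Bub_p$. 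So your argument should either invoke this level-set description of $\B_p$, or add it (or local $\G$-invariance of $\S$, where Remark \ref{rem:GS} applies) as a hypothesis; as written, the singular-$\S$ case is resolved by assertion rather than proof.
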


Since $\Bu_p\to\X$ and $\Bub_p\to\X$ are subbundles of $\G_p$, the equivalence of sections, $s$ and $\sb$, of these bundles falls under the ambit of our fundamental Theorem \ref{Gco_gen}. But our results from last subsection provided a connection between the equivalence problem for sections in $\G_p$ and an equivalence problem of the Maurer-Cartan forms $\muiK$, $|K|<p$, on $\G_p$. This connection between partial moving frames (and the Maurer-Cartan forms restricted to these) and the equivalence problem for sections in $\E$ is the chief contribution of this paper!

\bre
When we normalize a lifted invariant the recurrence formula (\ref{rec}) tells us what effect that has on the structure of the Maurer-Cartan forms on the partial moving frame. We have, 
\[
0=d\Ua=\Ua_i\oi+\sum_{|K|\leq 1}\lambda\left(\frac{\partial \Ua}{\partial \XiK}\restrictbig{\one}\right)\muiK + \text{contact forms on}~\Jinf(\E).
\]
When we restrict to the (prolonged) graph of a specific function, $u$, the contact forms on $\Jinf(\E)$ vanish. Since an equivalence map between the graphs of $u$ and $\ub$ must preserve all the Maurer-Cartan forms restricted to the partial moving frame, we can see that such an equivalence map must also preserve the lifted invariants $\Ua_i$. We may normalize these to produce a new partial moving frame whose sections, restricted to $u$ and $\ub$, are equivalent if and only if $u$ and $\ub$ are. The next subsection continues this discussion. 
\end{Rem}

Cartan's equivalence method, and, indeed, most past approaches to Lie pseudo-groups have been based on the concept of a $G$-structure.

\begin{definition}\label{gstructure}
A $G$-structure for an equivalence problem of sections of $\E\to\X$ under the extended action of a Lie pseudo-group $\G\to\X$ is a pair $\{G,\pmb{\eta}\}$ where $G$ is a subgroup of the general linear group $GL(n)$ and $\pmb{\eta}=\{\eta^1,\ldots, \eta^n\}$ is a coframe of $\X$ (that depends on jets of sections of $\E$), that satisfies the following. A transformation $\p\in\G$ maps the graph of $u$ to the graph of $\ub$ if and only if $\pmb{\eta}$, restricted to the (prolongation of the) two graphs, is preserved, up to an element of $G\subset GL(n)$, under the pull-back of $\p$. That is
\beq\label{etaub}
\p^*\bbm \eta^1\restrict{\ub} \\ \vdots \\ \eta^n\restrict{\ub} \ebm=g\cdot  \bbm \eta^1\restrict{u} \\ \vdots \\ \eta^n\restrict{u} \ebm,
\eeq
for a map $x\mapsto g(x)\in G$. 
\end{definition}

\bre
We note that the condition (\ref{etaub}) is equivalent to there existing two locally defined functions, $s$ and $\sb$ from $\X$ to $G\subset GL(n)$ such that
\[
\p^*\left(\sb(\xb)\cdot\bbm \eta^1\restrict{\ub} \\ \vdots \\ \eta^n\restrict{\ub} \ebm\right) = s(x)\cdot \bbm \eta^1\restrict{u} \\ \vdots \\ \eta^n\restrict{u} \ebm.
\]
Given such $s$ and $\sb$ we can let $g=\sb(\p(x))^{-1}\cdot s(x)$ to recover (\ref{etaub}). The $G$-structure approach to equivalence problems then boils down to finding two equivalent sections, $s$ and $\sb$ of the trivial bundle $\X\times G\to\X$, the first restricted to $u$ and the second to $\ub$, just like in our partial moving frame approach.
\end{Rem}

\bex\label{xupex}
Consider the point transformation counterpart to our running example. That is, the Lie pseudo-group, $\G$, with defining equations $X_p=U_p=0$ and 
\beq\label{eq:Ppoint}
 P=\frac{U_x+pU_u}{X_x+pX_u} 
\eeq
acting on $q=u_{xx}$ by 
\[
q\mapsto\frac{P_x+pP_u+qP_q}{X_x+pX_u}.
\] 
Normalizing the lifted invariants $P$ and $Q$ to zero, we get $U_x=-pU_u$ and $P_x=-qP_p-pP_u$. Notice that differentiating the defining equation (\ref{eq:Ppoint}) with respect to $p$ gives the first order integrability condition $$pX_u+X_x=\frac{U_u-PX_u}{P_p},$$ which becomes $X_x+pX_u=\dfrac{U_u}{P_p}$ after setting $P=0$. Including this integrability condition makes $\G_1$ formally integrable. We have, on the partial moving frame $\B_1=\{X=U=P=Q=0\}$, that
\beq
\begin{aligned}
&\o^x=X_xdx+X_udu=(pX_u+X_x)dx+X_u(du-pdx)=\frac{U_u}{P_p}dx+X_u(du-pdx),\\
&\o^u=-pU_udx+U_udu=U_u(du-pdx),\\
&\o^p=(-qP_p-pP_u)dx+P_udu+P_pdp=P_u(du-pdx)+P_p(dp-qdx).
\end{aligned}
\eeq
If we further restrict to a specific function $q=f(x,u,p)$, we obtain the $G$ structure of Cartan for this problem, see \cite{olver95}: set $\eta^1=dx$, $\eta^2=du-pdx$ and $\eta^3=dp-fdx$ as a base coframe on $J^1$ and note that the lifted horizontal coframe can be written as
\beq\label{estr}
\bbm \o^x \\ \o^u \\ \o^p \ebm = \bbm \frac{U_u}{P_p} & X_u & 0 \\ 0 & U_u & 0 \\ 0 & P_u & P_p \ebm \bbm \eta^1 \\ \eta^2 \\ \eta^3 \ebm= \bbm \frac{a_1}{a_2} & a_3 & 0 \\ 0 & a_1 & 0 \\ 0 & a_4 & a_2 \ebm \bbm \eta^1 \\ \eta^2 \\ \eta^3 \ebm.
\eeq
By Theorem \ref{zfund} this is indeed a $G$-structure with $G\subset GL(3)$ being the subgroup of all matrices of the form
\[
\bbm \frac{a_1}{a_2} & a_3 & 0 \\ 0 & a_1 & 0 \\ 0 & a_4 & a_2 \ebm,\quad a_1a_2\neq0.
\]
\eex

\bex\label{el}
Understandably, the pseudo-group of contact transformations, $\G$, is prominent in equivalence problems for differential equations. One facet of which is the equivalence problem of Lagrangians $\dis\intc L(x,u,p)dx$ and $\dis\intc \bar{L}(X,U,P)dX$ (which, for simplicity, we take to be first order and in one variable). An element $\p\in\G$ transforms Lagrangians according to
\[
\intc \bar{L}(X,U,P)dX\mapsto \intc \bar{L}(X,U,P)\cdot(X_x+pX_u) dx,
\]
and we see that the equivalence problem can be written as the PDE
\[
\bar{L}(X,U,P)\cdot(X_x+pX_u)=L(x,u,p)~~\iff~~\bar{L}(X,U,P)=\frac{L(x,u,p)}{X_x+pX_u}.
\] 
If we extend $\G$ to act on a new variable $L$ by
\[
L\mapsto \frac{L}{X_x+pX_u},
\]
then the equivalence problem of Lagrangians is equivalent to that of sections in this extended space. Normalizing the lifted invariant $\displaystyle \frac{L}{X_x+pX_u}$ to 1, and, as before, $\displaystyle P=\frac{U_x+pU_u}{X_x+pX_u}$ to 0, we have $X_x+pX_u=L$, $U_x=-pU_u$ and recalling the integrability condition $\dis X_x+pX_u=\frac{U_u}{P_p}$ we have
\beq
\begin{aligned}
&\o^x=X_xdx+X_udu=(pX_u+X_x)dx+X_u(du-pdx)=Ldx+X_u(du-pdx),\\
&\o^u=-pU_udx+U_udu=U_u(du-pdx)=P_pL(du-pdx),\\
&\o^p=P_xdx+P_udu+P_pdp=\frac{P_x+pP_u}{L}Ldx+P_u(du-pdx)+P_pdp.
\end{aligned}
\eeq
Letting $a_1=X_u$, $a_2=P_p$, $\dis a_3=\frac{P_u}{L}$ and $\dis a_4=\frac{P_x+pP_u}{L}$ we recover a $G$-structure for this problem, with the base coframe $\{\eta^1, \eta^2, \eta^3\}=\{Ldx, L(du-pdx), dp\}$:
\[
\bbm \o^x \\ \o^u \\ \o^p \ebm = \bbm 1 & a_1 & 0 \\ 0 & a_2 & 0 \\ a_4 & a_3 & a_2 \ebm \bbm \eta^1 \\ \eta^2 \\ \eta^3 \ebm.
\]
\eex
 
 \bex\label{divel}
Sometimes a little work is required for the best possible formulation of an equivalence problem. Expanding on Example \ref{el}, we can consider the \emph{divergence} equivalence of first order Lagrangians under point transformations. It is well known that two Lagrangians produce the same Euler-Lagrange equations if they differ by a total derivative $D_xA(x,u)$, so the equivalence problem
\[
\bar{L}(X,U,P)\cdot (X_x+pX_u)=L(x,u,p)+D_xA,
\]
where $A(x,u)$ is some (real-analytic) function, is of interest. For our first order Lagrangian, $L$, the Euler-Lagrange expression has the form
\[
E(L):=L_u-L_{px}-pL_{pu}-qL_{pp},
\]
where $q$ is the second derivative of $u$. Under a contact transformation, the Euler-Lagrange equations transform according to
\[
E(L)\mapsto\frac{E(L)}{U_uX_x-U_xX_u}=\frac{E(L)}{P_p\cdot(X_x+pX_u)^2},
\]
where the second equality follows from differentiating both sides of $\dis P=\frac{U_x+pU_u}{X_x+pX_u}$ w.r.t. $p$. It will be convenient to define the truncated Euler-Lagrange expression $\widetilde{E}(L):=L_u-L_{px}-pL_{pu}$. Let us also denote the source coordinates $(x,u,p)$ by $t$ and the target coordinates $(X,U,P)$ by $T$. If two Lagrangians, $L$ and $\bar{L}$, are divergence equivalent, then their Euler-Lagrange equations are equivalent. This means that we must have
\[
E(\bar{L})(T)=\frac{E(L)(t)}{P_p\cdot(X_x+pX_u)^2}.
\]
Writing this out, and remembering that $q$ transforms according to $$q\mapsto Q=\frac{P_x+pP_u+qP_p}{X_x+pX_u},$$ we have
\[
\begin{aligned}
{}&\bar{L}_u(T)-\bar{L}_{px}(T)-P\bar{L}_{pu}(T)-\left(\frac{P_x+pP_u+qP_p}{X_x+pX_u}\right)\bar{L}_{pp}(T)\\
{}&=\frac{L_u(t)-L_{px}(t)-pL_{pu}(t)-qL_{pp}(t)}{P_p\cdot (X_x+pX_u)^2}.
\end{aligned}
\]
Viewing this as a first degree polynomial in $q$, and comparing coefficients we obtain
\[
\bar{L}_u(T)-\bar{L}_{px}(T)-P\bar{L}_{pu}(T)-\left(\frac{P_x+pP_u}{X_x+pX_u}\right)\bar{L}_{pp}(T)=\frac{L_u(t)-L_{px}(t)-pL_{pu}(t)}{P_p\cdot (X_x+pX_u)^2},
\]
and
\[
\frac{P_p}{X_x+pX_u}\bar{L}_{pp}(T)=\frac{L_{pp}(t)}{P_p\cdot (X_x+pX_u)^2}.
\]
Solving for $L_{pp}(T)$ in the second equation, plugging the result into the first and moving some terms around gives
\beq\label{pp}
\bar{L}_{pp}(T)=\frac{L_{pp}(t)}{P_p^2\cdot (X_x+pX_u)},
\eeq
and
\beq\label{etilde}
\widetilde{E}(\bar{L})(T)=\frac{\widetilde{E}(L)(t)}{P_p\cdot (X_x+pX_u)^2}+\frac{P_x+pP_u}{X_x+pX_u}\cdot\frac{L_{pp}(t)}{P_p^2\cdot(X_x+pX_u)}.
\eeq
The two left hand sides in (\ref{pp}) and (\ref{etilde}) only depend on target coordinates and are therefore invariant under the right action of $\G$. We can then extend the action of $\G$ to a bundle over the base manifold of the $(x,u,p)$, $\E$, parametrized by $(x,u,p,w,z)$ where $w=L_{pp}$ and $z=\widetilde{E}(L)$ and the action of $\G$ on these parameters is given by
\[
W=\frac{L_{pp}(t)}{P_p^2\cdot (X_x+pX_u)},
\]
and
\[
Z=\frac{\widetilde{E}(L)(t)}{P_p\cdot (X_x+pX_u)^2}+\frac{P_x+pP_u}{X_x+pX_u}\cdot\frac{L_{pp}(t)}{P_p^2\cdot(X_x+pX_u)}.
\] 
Two Lagrangians are divergence equivalent if and only if the corresponding sections they define in $\E$ are congruent under this extended action of $\G$ on $\E$. We can normalize both lifted invariants, $W$ and $Z$, setting the first to 1 and the second to zero. This gives
\[
X_x+pX_u=\frac{L_{pp}}{P_p^2},\quad P_x+pP_u=-\frac{\widetilde{E}(L)}{L_{pp}}P_p,
\]
where all sections are now being evaluated on the source coordinates $t=(x,u,p)$. Recalling the first order integrability condition $X_x+pX_u=\dfrac{U_u}{P_p}$ we also have on this partial moving frame that
\[
\frac{U_u}{P_p}=\frac{L_{pp}}{P_p^2}~~\Rightarrow~~U_u=\frac{L_{pp}}{P_p}
\]
and the lifted horizontal coframe (which is a basis for the restricted Maurer-Cartan forms on $\G_1$ to this first order partial moving frame) is
\beq
\begin{aligned}
&\o^x=X_xdx+X_udu=\frac{L_{pp}}{P_p^2}dx+X_u(du-pdx)=\frac{U_u^2}{L_{pp}}dx+X_u(du-pdx),\\
&\o^u=-pU_udx+U_udu=U_u(du-pdx),\\
&\o^p=P_xdx+P_udu+P_pdp=-\frac{\widetilde{E}(L)}{L_{pp}}P_pdx+P_u(du-pdx)+P_pdp\\
&~~~\hspace{0.5mm}=\frac{1}{U_u}\eta_c+P_u(du-pdx),
\end{aligned}
\eeq
where $\eta_c$ is the differential form
\[
\eta_c=-\widetilde{E}(L)dx+L_{pp}dp.
\]
We choose as group parameters
\[
a_1=X_u,~a_2=U_u,~a_3=P_u,
\]
and as a base coframe $\{\eta^1, \eta^2, \eta^3\}=\{\frac{1}{L_{pp}}dx, du-pdx,\eta_c\}$, but with these choices, we can write $\{\ox,\o^u, \o^p\}$ as
\[
\bbm \o^x \\ \o^u \\ \o^p \ebm = \bbm a_2^2 & a_1 & 0 \\ 0 & a_2 & 0 \\ 0 & a_3 & \frac{1}{a_2} \ebm \bbm \eta^1 \\ \eta^2 \\ \eta^3 \ebm.
\]
\eex

\end{subsection}

\end{section}

\begin{section}{Examples}\label{examples1}

We now solve a few equivalence problems using our results. But first we offer a few words of guidance to the reader planning to apply this algorithm to an equivalence problem.

\begin{Rem}\label{guide}
The formulas for the lifted invariants $\UaJ$ quickly get out of hand as $|J|$ increases, and directly solving a system of normalization equations, $\UaJ=c\av_J$, will overwhelm computer algebra systems in most examples for $|J|\geq 3$. The key to effectively carry out the method is to take advantage of a few simplifying facts. 
\begin{itemize}
\item By far the most important fact is that the recurrence formula only involves the linearization of lifted invariants \emph{at the identity}. This provides immense simplification since we can obtain the form of the structure equations by computing
\[
\lambda(\vbinf(\uaj))=\frac{\partial \UaJ}{\partial X^i_K}\restrictbig{\one}\muiK,
\]
which is very easy for computer algebra systems at (essentially) arbitrary high orders of $|J|$ due to the prolongation formula. 
\item The linearized expressions
\[
\frac{\partial \UaJ}{\partial X^i_K}\restrictbig{\one}\muiK
\]
also tell us which group parameters can be normalized in $\UaJ$ \emph{without} us having to compute these normalizations.
\item As mentioned in Remark \ref{rem:sim} the fact that $\UaJ$ is affine in the top order group parameters helps in normalizing linear combinations of these. What the original wielders of the equivalence method did was essentially breaking each $\UaJ$ up into a top order part and lower order part and then solve a \emph{linear} system for the top order parts, never explicitly writing out the complicated expressions for lifted invariants. 
\item The structure equations of the Maurer-Cartan forms $\muiK$ and the recurrence formula will provide, with minimal effort, the structure equations of the various $G$-structures arising during the course of an equivalence problem. 
\end{itemize} 
\end{Rem}

Keeping these bullet points in mind, we now work out some examples.

\begin{subsection}{Equivalence of second order ordinary differential equations}
\begin{Ex}\label{point}
Consider the pseudo-group $\G$, acting on $\R^3$ with coordinates $(x,u,u_x)=(x,u,p)$, with defining equations
\[
X_p=U_p=0,\quad P=\frac{U_x+pU_u}{X_x+pX_u}.
\]
This is the pseudo-group from Example \ref{xupex} of point transformations. We spot the obvious integrability condition $P_p=(U_u-PX_u)/(X_x+pX_u)$ and the corresponding dependency $\mu^p_p=\mu^u_u-\mu^x_x$, and notice it is the only integrability condition appearing at the first stage, and adding it to $\G_1$ in fact makes $\G_1$ formally integrable. The other dependencies among the Maurer-Cartan forms on $\G_1$ are
\beq\label{xupP}
\mu^x_p=\mu^u_p=0,\quad \mu^p=\mu^u_x+P\mu^u_u-P\mu^x_x-P^2\mu^x_u.
\eeq
We extend the action of $\G$ on itself to the extra coordinate $u_{xx}=q$,
\[
q\mapsto Q=\frac{-q P X_u+q U_u+p^2 P_u X_u+p P_x X_u+p
   P_u X_x+P_x X_x}{\left(p
   X_u+X_x\right)^2},
\]
and consider the equivalence problem for sections $q$, $(x,u,p)\mapsto(x,u,p,q(x,u,p))$, i.e. second order ordinary differential equations. We normalize all zero order lifted invariants, $X=U=P=Q=0$, resulting, first of all in
\[
\ox=-\mu^x,~~\o^u=-\mu^y,~~\o^p=-\mu^p=\mu^u_x.
\]
By the recurrence formula, we have
\begin{align*}
0&=dQ=Q_X\ox+Q_U\o^u+Q_P\o^p + \mu^p_x+P\mu^p_u+Q\mu^u_u-2Q\mu^x_x-3QP\mu^x_u\\
&=Q_X\ox+Q_U\o^u+Q_P\o^p + \mu^p_x.
\end{align*}

After normalizing $X=U=P=Q=0$ we are working on a partial moving frame $\B_1$ (as always we do not restrict to a specific section, working instead with the general jet coordinates $j^\infty q\resx=(x,q,\ldots, q_J, \ldots)$). Next we compute, on $\Bt_1$, using all the linear dependencies among the $\mu$ from above,

\beq\label{point_str1}
\begin{aligned}
d\o^x&=\ox\wedge\mu^x_x+\o^u\wedge\mu^x_u,\\
d\o^u&=\ox\wedge\o^p+\o^u\wedge\mu^u_u,\\
d\o^p&=(Q_X\ox+Q_U\o^u+Q_P\o^p)\wedge\ox+\o^u\wedge\mu^p_u+\o^p\wedge(\mu^u_u-\mu^x_x).
\end{aligned}
\eeq
The horizontal parts we need to normalize are $Q_U\o^u\wedge\o^x$ and $Q_P\o^p\wedge\o^x$. The lifted invariant $Q_X$ disappears due to $\ox\wedge\ox=0$ and we can assume it is normalized to zero also; $Q_X$ depends on $P_{xx}$ which does not appear in any other lifted invariant and cannot contribute to any integral conditions/lower order invariants.

The recurrence formula gives
\begin{align*}
d_GQ_X&=\mu^p_{xx}-Q_P \mu^p_x+Q_X \mu^u_u-3Q_X\mu^x_x,\\
d_GQ_U&=\mu^p_{ux}-Q_P\mu^p_u-2 Q_U\mu^x_x,\\
d_GQ_P&=-\mu^x_{xx}-Q_P\mu^x_x+2\mu^p_u,
\end{align*}
and we normalize these lifted invariants to zero. The structure equations (\ref{point_str1}) become
\beq\label{point_str2}
\begin{aligned}
d\o^x&=\ox\wedge\mu^x_x+\o^u\wedge\mu^x_u,\\
d\o^u&=\ox\wedge\o^p+\o^u\wedge\mu^u_u,\\
d\o^p&=\o^u\wedge\mu^p_u+\o^p\wedge(\mu^u_u-\mu^x_x)
\end{aligned}
\eeq
and we now test for involution by looking at the set
\[
\gamma_1\{\left(a\pder{\o^x}+b\pder{\o^u}+c\pder{\o^p}\right)\interior d\oi~|~i\in\{x,u,p\}\},
\]
where $\gamma_1$ projects onto the space of first order Maurer-Cartan forms. This set is
\[
\{a\mu^x_x+b\mu^x_u,~~ b\mu^u_u,~~ b\mu^p_u+c\mu^u_u-c\mu^x_x\}
\]
and maximizing its rank is equivalent to maximizing the rank of the matrix
\[
\bbm a & b & 0 & 0 
\\
0 & 0 & b & 0
\\
-c & 0 & c & b
\ebm.
\] 
We find the reduced Cartan characters $s^{(1)}_1=3$, $s^{(1)}_2=1$ and $s^{(1)}_3=0$. The second order pseudo-group parameters we have not yet normalized are $\{P_{uu}, U_{uu}, X_{uu}, X_{ux}\}$ and Cartan's test ask whether
\[
4=\#\{P_{uu}, U_{uu}, X_{uu}, X_{ux}\}=\sum is^{(1)}_i=1\cdot3+2\cdot 2+0\cdot 3=5.
\]
This is untrue and the equivalence problem is not in involution at this point. We must therefore prolong to include $\mu^x_x, \mu^x_u, \mu^u_u$ and $\mu^p_u$. The structure equations are (\ref{point_str2}) and
\beq\label{point_str3}
\begin{aligned}
d\mu^x_x&=\ox\wedge(Q_{Pi}\oi+2\mu^p_u)+\o^u\wedge\mu^u_{ux}+\mu^x_u\wedge\o^p,\\
d\mu^x_u&=\ox\wedge\mu^x_{ux}+\o^u\wedge\mu^x_{uu}+\mu^x_x\wedge\mu^x_u+\mu^x_u\wedge\o^p,\\
d\mu^u_u&=\o^x\wedge\mu^p_u+\o^u\wedge\mu^u_{uu}+\o^p\wedge\mu^x_u,\\
d\mu^p_u&=\o^x\wedge(-Q_{Ui}\oi)+\o^u\wedge\mu^p_{uu}+\o^p\wedge(\mu^u_{uu}-\mu^x_{ux})-\mu^x_x\wedge\mu^p_u.
\end{aligned}
\eeq
We now turn to the second order invariants using the recurrence formula but we find that
\begin{align*}
dQ_{PP}&=Q_{PPi}\oi+2\mu^u_{uu}-4\mu^x_{ux},\\
d_GQ_{PPP}&=Q_{PPPi}\oi-6\mu^x_{uu}.
\end{align*}
and so we can normalize $U_{uu}$ and $X_{uu}$ and reduce our structure equations according to
\begin{align*}
\mu^u_{uu}&=-\frac{1}{2}Q_{PPi}\oi+2\mu^x_{ux},
\\
\mu^x_{uu}&=\frac{1}{6}Q_{PPPi}\oi.
\end{align*}
The equations (\ref{point_str3}) become
\beq\label{point_str4}
\begin{aligned}
d\mu^x_x&=\ox\wedge(Q_{Pi}\oi+2\mu^p_u)+\o^u\wedge\mu^u_{ux}+\mu^x_u\wedge\o^p,\\
d\mu^x_u&=\ox\wedge\mu^x_{ux}+\o^u\wedge(\frac{1}{6}Q_{PPPi}\oi)+\mu^x_x\wedge\mu^x_u+\mu^x_u\wedge\o^p,\\
d\mu^u_u&=\o^x\wedge\mu^p_u+\o^u\wedge(-\frac{1}{2}Q_{PPi}\oi+2\mu^x_{ux})+\o^p\wedge\mu^x_u,\\
d\mu^p_u&=\o^x\wedge(-Q_{Ui}\oi)+\o^u\wedge\mu^p_{uu}+\o^p\wedge(-\frac{1}{2}Q_{PPi}\oi+\mu^x_{ux})-\mu^x_x\wedge\mu^p_u.
\end{aligned}
\eeq
Using the recurrence formula we also find
\begin{align*}
d_GQ_{PU}&=-\mu^x_{uxx}+2\mu^p_{uu},\\
d_GQ_{PX}&=-\mu^x_{xxx}+2\mu^p_{ux},\\
d_GQ_{UU}&=\mu^p_{uux},\\
d_GQ_{UX}&=\mu^p_{uxx},\\
d_GQ_{XX}&=\mu^p_{xxx},\\
d_GQ_{PPU}&=2\mu^u_{uuu}-4\mu^x_{uux}-Q_{PPX}\mu^u_x,\\
d_GQ_{PPX}&=-4\mu^x_{uxx}+2\mu^p_{uu}-Q_{PPX}\mu^x_x-Q_{PPX}\mu^u_u,\\
d_GQ_{PPPX}&=-6\mu^x_{uux},\\
d_GQ_{PPPU}&=-6\mu^x_{uuu}-Q_{4P}\mu^p_{u}-Q_{PPPX}\mu^x_u,\\
d_GQ_{4P}&=2Q_{4P}\mu^x_x-3Q_{4P}\mu^u_u.
\end{align*}
The equations for $Q_{PU}$ and $Q_{PPX}$ imply that if we normalize $X_{uxx}$ from $Q_{PU}$ then $dQ_{PPX}$ becomes
\[
-6\mu^p_{uu}+(Q_{PPXi}-4Q_{PUi})\oi-Q_{PPX}\mu^x_x-Q_{PPX}\mu^u_u
\]
and $P_{uu}$ can then be normalized from $Q_{PPX}=0$, in which case
\[
\mu^p_{uu}=\frac{1}{6}(Q_{PPXi}-4Q_{PUi})\oi.
\]
Normalizing in this manner reduces the structure equations (\ref{point_str4}) to (writing $I:=Q_{4P}$)
\beq\label{point_str5}
\begin{aligned}
d\mu^x_x&=2\ox\wedge\mu^p_u+\o^u\wedge\mu^u_{ux}+\mu^x_u\wedge\o^p,\\
d\mu^x_u&=\ox\wedge\mu^x_{ux}+\frac{1}{6}I\o^u\wedge\o^p+\mu^x_x\wedge\mu^x_u+\mu^x_u\wedge\o^p,\\
d\mu^u_u&=\o^x\wedge\mu^p_u+2\o^u\wedge\mu^x_{ux}+\o^p\wedge\mu^x_u,\\
d\mu^p_u&=\frac{1}{6}\o^u\wedge\left((Q_{PPXi}-4Q_{PUi})\oi\right)+\o^p\wedge\mu^x_{ux}-\mu^x_x\wedge\mu^p_u.
\end{aligned}
\eeq
We continue our toil of studying the $d_G$ of the lifted invariants appearing in the above equations and the recurrence formula, once again, gives
\[
d_G(Q_{PPXU}-4Q_{PUU})=-6\mu^p_{uuu}-(Q_{PPXX}-4Q_{PUX})\mu^x_u.
\]
We also find that
\[
d_G(Q_{PPXX}-4Q_{PUX})=-6\mu^p_{uux}-2(Q_{PPXX}-4Q_{PUX})\mu^x_x-(Q_{PPXX}-4Q_{PUX})\mu^u_u
\]
and according to the normalization of $Q_{UU}$ we find that, on our partial moving frame (writing $J:=Q_{PPXX}-4Q_{PUX}$),
\[
d_GJ=-2J\mu^x_x-J\mu^u_u
\]
and so we will possibly be able to normalize $X_x$ or $U_u$ from $J$, depending on the precise form of $J$.

At this point we find that we have normalized \emph{all} third order pseudo-group parameters and this equivalence problem will always reduce to a standard equivalence problem for \emph{coframes}. The space on which this coframe lives is parametrized by $x,u,p,X_x,X_u, U_u, P_u$ and $X_{ux}$, but this last coordinate is the only second order pseudo-group parameter we have not normalized. In addition to the above structure equations we join the equation for $d\mu^x_{ux}$ which we can read off the diffeomorphism pseudo-group structure equations (\ref{strrem}),
\[
d\mu^x_{ux}=\frac{1}{6}\o^x\wedge\left((Q_{PPXi}+2Q_{PUi})\right)\oi+\frac{1}{6}\o^u\wedge(Q_{PPPXi}\oi)+\mu^x_x\wedge\mu^x_{ux}+\mu^x_u\wedge\mu^p_{u}+\mu^x_{ux}\wedge\mu^u_u+\mu^x_{ux}\wedge\mu^x_u.
\]
The non-constant coefficients in the above equation are $K:=Q_{PPUX}+2Q_{PUU}-Q_{PPPXX}$ and $L:=Q_{PPPPX}$. We now must compute $I, J, K$ and $L$. We find that
\[
I=\frac{X_x+pX_u}{U_u^3}q_{pppp},
\]
while $J$, $K$ and $L$ have more complicated expressions. For example,
\begin{footnotesize}
\begin{align*}
J\cdot U_u
   \left(p X_u+X_x\right)^2&=q^2 q_{pppp}+p^2 q_{ppuu}-4 q_{pux}+2 p
   q_{ppux}-3 q q_{ppu}+2q p q_{pppu}-3 q_u
   q_{pp}+4 q_p q_{pu}+p q_u q_{ppp}\\
   &-p q_p
   q_{ppu}-4 p q_{puu}+2 q q_{pppx}+q_x
   q_{ppp}-q_p q_{ppx}+q_{ppxx}+6 q_{uu}.
\end{align*}
\end{footnotesize}
The final structure equations for our coframe are
\begin{align*}
d\o^x&=\ox\wedge\mu^x_x+\o^u\wedge\mu^x_u,\\
d\o^u&=\ox\wedge\o^p+\o^u\wedge\mu^u_u,\\
d\o^p&=\o^u\wedge\mu^p_u+\o^p\wedge(\mu^u_u-\mu^x_x),\\
d\mu^x_x&=2\ox\wedge\mu^p_u+\o^u\wedge\mu^u_{ux}+\mu^x_u\wedge\o^p,\\
d\mu^x_u&=\ox\wedge\mu^x_{ux}+\frac{1}{6}I\o^u\wedge\o^p+\mu^x_x\wedge\mu^x_u+\mu^x_u\wedge\o^p,\\
d\mu^u_u&=\o^x\wedge\mu^p_u+2\o^u\wedge\mu^x_{ux}+\o^p\wedge\mu^x_u,\\
d\mu^p_u&=\frac{1}{6}J\o^u\wedge\o^x+\o^p\wedge\mu^x_{ux}-\mu^x_x\wedge\mu^p_u,\\
d\mu^x_{ux}&=\frac{1}{6}K\o^x\wedge\o^u+\frac{1}{6}L\o^u\wedge\o^p+\mu^x_u\wedge\mu^p_{u}+\mu^x_{ux}\wedge\mu^u_u+\mu^x_{ux}\wedge\mu^x_u.
\end{align*}
The equivalence procedure now continues with a detailed study of the invariants $I, J, K$ and $L$. The complete analysis of this equivalence problem would take up another couple of dozen pages, but this analysis for recently completed for the first time, cf. \cite{milson15}.

\eex
\end{subsection}

\begin{subsection}{Equivalence of differential operators}

\bex\label{ex:diffop}
Consider a linear second order differential operator on $\R$,
\beq\label{eq:D}
\mathscr{D}=fD^2+gD+h,
\eeq
where $f,g,h:\R\to\R$ are real-analytic, and $f\neq0$. When we apply $\mathscr{D}$ to a real-analytic function $u:\R\to\R$ we obtain the function
\[
fu''+gu'+hu.
\]
Now consider the pseudo-group, $\G$, of transformations of the $(x,u)$ of the form
\[
(x,u)\mapsto (\p(x), u\cdot\psi(x))=(X, U),
\]
where $\p, \psi:\R\to\R$ are real-analytic. Restricting to the set $\X=\{(x,u)\in\R^2~|~u>0\}$ and to $\psi>0$ this is a Lie pseudo-group of transformations of $\E$ that has defining equations
\[
X_u=0,\quad U_{uu}=0,\quad U_{u}u=U.
\]
The elements of $\G$ preserve the space of linear operators and a transformation from $\G$ maps (\ref{eq:D}) to
\[
\bar{\mathscr{D}}=F\bar{D}^2+G\bar{D}+H,
\]
where $\bar{D}$ is the derivative with respect to the transformed independent variables $X$ and the lifted coefficients $F, G, H$ have explicit formulas
\beq\label{eq:FGH}
\begin{aligned}
F&=f\frac{X_x^2}{U_u},\\
G&=-f\frac{2 U_xX_x-X_{xx}U_uu}{uU_u^2}+g\frac{X_x}{U_u},\\
H&=-f\frac{U_{xx}U_uu-2U_x^2}{u^2 U_u^3}-g\frac{U_x}{u U_u^2}+\frac{h}{U_u}.
\end{aligned}
\eeq
Notice that since $\frac{U}{u}$ is independent of $u$, these lifted invariants are also independent of $u$ (as they should be be). Each operator $\mathscr{D}$ defines a section,
\[
(x,u)\mapsto (x,u, f(x), g(x), h(x)),
\]
in the trivial bundle $\X\times\R^3\to\X$ and two operators are equivalent if their respective sections are congruent under the extended action of $\G$ given by (\ref{eq:FGH}).

Notice that the extended action does depend on the second order group parameters and so we shall, initially, be working in $\Gt_2$. The recurrence formula gives
\begin{align*}
0&=dF=F_X\ox+\o^u+2\mu^x_x,\\
0&=dG=G_X\ox-2\mu^u_x+\mu^x_{xx},\\
0&=dH=H_X\ox-\mu^u_{xx}.
\end{align*}

We normalize $F=1$ and $G=H=0$ and, in particular, obtain the normalization
\[
X_x=\frac{1}{\sqrt{fu}}.
\]
The structure equations are
\beq\label{ostr1}
\begin{aligned}
d\o^x&=\o^x\wedge\mu^x_x=-\frac{1}{2}F_{X}\ox\wedge\ox=0\\
d\o^u&=\o^x\wedge\mu^u_x.
\end{aligned}
\eeq
We have in fact normalized all second order pseudo-group parameters and so we have a fully determinate equivalence problem for the coframe $\{\o^x, \o^u, \mu^u_x\}$. The final structure equation is
\[
d\mu^u_x=F_X\o^x\wedge\mu^u_x+\o^u\wedge\mu^u_x.
\]
We compute 
\[
dF_X=(F_{XX}-2G_X)\ox+3\mu^u_x
\]
and realize that $U_x$ can be normalized from $F_X=0$. Indeed
\[
F_X=0\quad \iff \quad U_x=\frac{2g-f'}{3f}
\]
and we have normalized all pseudo-group parameters and so the equivalence problem has been reduced to an equivalence problem of coframes and invariants on $\X=\{(x,u)\in\R^2~|~u>0\}$. Now, on this final moving frame $\mu^u_x$ is reduced to  
\[
\mu^u_x=-\frac{1}{3}(F_{XX}-2G_X)\ox
\]
and so an equivalence map must preserve the coframe
\beq\label{cofr}
\begin{aligned}
\o^x&=\frac{1}{\sqrt{fu}}dx,\\
\o^u&=\frac{2g-f'}{3f}dx+\frac{1}{u}du,
\end{aligned}
\eeq
with structure equations $d\ox=d\o^u=0$ and the invariant 
\[
F_{XX}-2G_X=\frac{u \left(f f''+2 g f'-3 f g'+5 f h-g^2\right)}{f}.
\]
In particular, if $f f''+2 g f'-3 f g'+5 f h-g^2=0$ the differential operator has symmetry group $\R^2$, and all such operators are equivalent. The canonical operator in this class can be taken as $\mathscr{D}=D^2$ which is easily seen to have symmetry group $\R_+\times\R \cong \R^2$ acting via
\[
(x,u)\mapsto (\lambda x + \ep, \lambda^2 u)
\]
for $\lambda>0$, $\ep\in\R$.

\eex
\end{subsection}

\begin{subsection}{Medolaghi's pseudo-group}
\bex\label{medo}
Consider the Lie pseudo-group of transformations
\[
X=f(x),\quad Y=f_x(x)y+g(x),
\]
with defining equations
\[
Y_y=X_x,\quad X_y=0,
\]
extended to acting on the variable $u\in\R$ by
\[
u\mapsto U=u+ \frac{Y_x}{X_x}.
\]
As before, we are interested in the congruence problem of sections $(x,y)\mapsto (x,y,u(x,y))$ under this pseudo-group. We begin by normalizing $X=Y=U=0$, and the recurrence formula gives
\[
0=dU=U_X\o^x+U_Y\o^y + \mu^y_x-\mu^x_x.
\]
We normalize $Y_x$ from $U=0$ to obtain $Y_x=-uX_x$ and the zero-order structure equations become
\beq
\begin{aligned}
{}&d\o^x=\o^x\wedge\mu^x_x,\\
&d\o^y=\o^x\wedge\mu^y_x+\o^y\wedge\mu^y_y=\o^x\wedge\mu^x_x-U_Y\o^y\wedge\o^x+\o^y\wedge\mu^x_x.
\end{aligned}
\eeq
Taking the exterior derivative on both sides indicates that, modulo lower order Maurer-Cartan forms, $d_GU_Y=-\mu^x_{xx}$. We can therefore normalize $U_{Y}=0$ and solve for $X_{xx}$, sparing the reader the details, we obtain $X_{xx}=-u_yX_x$. The recurrence formula is $0=dU_Y=U_{Yi}\oi-\mu^x_{xx}$. The lifted invariant $U_X$ vanishes along with $\ox\wedge \ox$ in the structure equations, but this indicates that $Y_{xx}$ can be normalized in $U_X$. There are no non-normalized second order group parameters left so we have complete reduction on a first order partial moving frame $\B_1$. We have only one non-normalized group parameter of order one, and we compute
\[
d\mu^x_x=\o^x\wedge\mu^x_{xx}=U_{YY}\o^x\wedge\o^y.
\]
One consequence of this formula is that $U_{YY}$ must depend on \emph{first} order group parameters only. Indeed, on our restricted space, we have
\[
U_{YY}=\frac{u_{yy}}{X_x^2}.
\]
Here is where our equivalence problem branches. If $u_{yy}<0$ we can normalize $U_{YY}=-1$, but if $u_{yy}=0$ we can not normalize $U_{YY}$ at all. Focusing on the third branch, where $u_{yy}>0$, we normalize $U_{YY}$ to 1 and obtain the deterministic coframe
\beq\label{uyy}
\begin{aligned}
{}&\o^x=\sqrt{u_{yy}}dx,\\
&\o^y=-u\sqrt{u_{yy}}dx+\sqrt{u_{yy}}dy.
\end{aligned}
\eeq
The only genuine invariants of this problem will appear as structure functions of this coframe on the base manifold and their coframe derivatives. The recurrence formula gives $\displaystyle 0=dU_{YY}=U_{YYi}\oi-2\mu^x_x$ and so
\beq
\begin{aligned}
{}&d\o^x=\frac{1}{2}U_{YYY}\o^x\wedge\o^y,\\
&d\o^y=\frac{1}{2}(U_{YYY}-U_{YYX})\o^x\wedge\o^y.
\end{aligned}
\eeq
We can compute the invariants $U_{YYY}$ and $U_{YYX}$ either directly from (\ref{uyy}) or by first computing their lifted form and plugging in our normalizations. In any case, we have
\[
U_{YYY}=\frac{u_{yyy}}{u^{3/2}_{yy}},\quad U_{YYX}=\frac{u_{xyy}+uu_{yyy}+2u_yu_{yy}}{u^{3/2}_{yy}}.
\]

\begin{Rem}
Notice, again, that our methods have lead naturally to a generating system of invariants, namely $U_{YYY}$ and $U_{YYX}$ restricted to our moving frame.
\end{Rem}

On sections that are affine in $y$, i.e. $u_{yy}=0$, we have $d\mu^x_x=0$ and the Poincar\'e-lemma guarantees that there exists a function $\alpha$ such that $d\alpha=\mu^x_x$. Using the definition of the unrestricted Maurer-Cartan form, $$\mu^x_x=\frac{1}{X_x}\left(dX_x-X_{xx}dx\right),$$ we see that on our partial moving frame, where we normalized $X_{xx}=-u_yX_x$, we have $\displaystyle \mu^x_x=\frac{dX_x}{X_x}+u_ydx=d\left(\log(X_x)+xu_y\right)$ and the function $\alpha$ must have the form
\[
\alpha=\log(X_x)+\intc u_ydx.
\] 
The coframe can now be written
\beq
\begin{aligned}
{}&\o^x=\frac{e^\alpha}{e^{xu_y}} dx,\\
&\o^y=-\frac{ue^\alpha}{e^{xu_y}} dx+\frac{e^\alpha}{e^{xu_y}} dy,\\
&\sigma=d\alpha,
\end{aligned}
\eeq
The structure functions in this case are
\beq\label{alphaox}
\begin{aligned}
{}&d\o^x=\o^x\wedge\sigma,\\
&d\o^y=\o^x\wedge\sigma+\o^y\wedge\sigma,\\
&d\sigma=0.
\end{aligned}
\eeq
All the coefficients in these structure functions are constant so, in particular, the symmetry group of any affine section is a finite dimensional (local) Lie group with Lie algebra structure (\ref{alphaox}).

\begin{Rem}\label{singular_ex}
Notice that sections satisfying $u_{yy}=0$ do not make up a locally $\G$-invariant set $\S$ in $\Jinf(\E)$. Further, the action of $\G^\S$ on $\S$ (cf. Definition \ref{parmov}) will never be free as $X_x$ cannot be normalized and therefore the equivariant moving frame is, on its own, not able to deduce the symmetry properties of sections $\jinf u\in\S$. Since our formulation, via Theorem \ref{Gco}, characterizes $\G^\S$ by the collection of maps preserving restricted Maurer-Cartan forms these ``singular'' sections are placed on an entirely equal footing as regular jets and their analysis is no different. 
\end{Rem}

\eex
\end{subsection}

\end{section}

\newpage

%\appendix

%\input{imf-Pommaret}

%\input{imf-CartanKuranishi}

%\input{CK-Tameness}

%\input{imf-Orbits}

\newpage
\bibliographystyle{plain}

\bibliography{./InvolutiveMovingFrames.bbl}

\begin{thebibliography}{10}

\bibitem{anderson93}
I.~M. Anderson, N.~Kamran, and P.~J. Olver.
\newblock Internal, external, and generalized symmetries.
\newblock {\em Advances in mathematics}, 100(1):53--100, 1993.

\bibitem{IMF}
{\"O}.~Arnaldsson.
\newblock {\em Involutive moving frames}.
\newblock PhD thesis, University of Minnesota, 2017.

\bibitem{berchenko00}
I.~Berchenko and P.~J. Olver.
\newblock Symmetries of polynomials.
\newblock {\em Journal of Symbolic Computation}, 29(4-5):485--514, 2000.

\bibitem{boutin00}
M.~Boutin.
\newblock Numerically invariant signature curves.
\newblock {\em International Journal of Computer Vision}, 40(3):235--248, 2000.

\bibitem{Bryant87}
R.~L. Bryant.
\newblock On notions of equivalence of variational problems with one
  independent variable.
\newblock {\em Contemp. Math}, 68:65--76, 1987.

\bibitem{Bryant95co}
R.~L. Bryant and P.~A. Griffiths.
\newblock Characteristic cohomology of differential systems {(I)}: {G}eneral
  theory.
\newblock {\em Journal of the American Mathematical Society}, 8(3):507--596,
  1995.

\bibitem{calabi98}
E.~Calabi, P.~J. Olver, C.~Shakiban, A.~Tannenbaum, and S.~Haker.
\newblock Differential and numerically invariant signature curves applied to
  object recognition.
\newblock {\em International Journal of Computer Vision}, 26(2):107--135, 1998.

\bibitem{calabi96}
E.~Calabi, P.~J. Olver, and A.~Tannenbaum.
\newblock Affine geometry, curve flows, and invariant numerical approximations.
\newblock {\em Adv. in Math.}, 124(1):154--196, 1996.

\bibitem{car37}
\'E. Cartan.
\newblock La th{\'e}orie des groupes finis et continus et la g{\'e}om{\'e}trie
  diff{\'e}rentielle: trait{\'e}es par la m{\'e}thode du rep{\`e}re mobile.
\newblock In {\em Cahiers Scientifiques}, volume~18. Gauthier-Villars, Paris,
  1937.

\bibitem{Car53}
\'E. Cartan.
\newblock Les sous-groupes des groupes continus de transformations.
\newblock In {\em Oeuvres-Compl\`etes, Part {II}}, volume~2, pages 719--856.
  Gauthier-Villars, Paris, 1953.

\bibitem{chern52}
S.-S. Chern.
\newblock Differential geometry of fiber bundles.
\newblock In {\em Proceedings of the {I}nternational {C}ongress of
  {M}athematicians}, volume~2, pages 397--411. Amer. Math. Soc., Providence, R.
  I., 1950.

\bibitem{Ehresmann53}
C.~Ehresmann.
\newblock Introduction \`a la th\'eorie des structures infinit\'esimales et des
  pseudo-groupes de {L}ie.
\newblock {\em G\'eometrie Diff\'erentielle}, pages 97--110, 1953.

\bibitem{Fels99}
M.~Fels and P.~J. Olver.
\newblock Moving coframes: {II}. {R}egularization and theoretical foundations.
\newblock {\em Acta Applicandae Mathematica}, 55(2):127--208, 1999.

\bibitem{Gardner83}
R.~B. Gardner.
\newblock {\em Differential geometric methods interfacing control theory}.
\newblock Birkhauser, Boston, 1983.

\bibitem{Gardner89}
R.~B. Gardner.
\newblock {\em The method of equivalence and its applications}.
\newblock SIAM, 1989.

\bibitem{Gardner85}
R.~B. Gardner and W.~F. Shadwick.
\newblock Equivalence of one dimensional {L}agrangian field theories in the
  plane {I}.
\newblock In {\em Global Differential Geometry and Global Analysis 1984}, pages
  154--179. Springer, 1985.

\bibitem{griffiths74}
P.~A. Griffiths.
\newblock On {C}artan's method of {L}ie groups and moving frames as applied to
  uniqueness and existence questions in differential geometry.
\newblock {\em Duke Math. J}, 41(4):775--814, 1974.

\bibitem{Hsu89}
L.~Hsu, N.~Kamran, and P.~J. Olver.
\newblock Equivalence of higher-order {L}agrangians. {II}. {T}he {C}artan form
  for particle {L}agrangians.
\newblock {\em Journal of mathematical physics}, 30(4):902--906, 1989.

\bibitem{Kamran89}
N.~Kamran.
\newblock {\em Contributions to the study of the equivalence problem of
  {\'E}lie {C}artan and its applications to partial and ordinary differential
  equations}, volume~45.
\newblock Acad. Royale de Belgique, 1989.

\bibitem{kamran89eq}
N.~Kamran and P.~J. Olver.
\newblock Equivalence problems for first order {L}agrangians on the line.
\newblock {\em Journal of differential equations}, 80(1):32--78, 1989.

\bibitem{Kamran91}
N.~Kamran and P.~J. Olver.
\newblock Equivalence of higher order {L}agrangians. {I}. {F}ormulation and
  reduction.
\newblock {\em J. Math. Pures et Appliquees}, 70:369--391, 1991.

\bibitem{Kamran87}
N~Kamran and W.F. Shadwick.
\newblock A differential geometric characterization of the first {P}ainlev{\'e}
  transcendent.
\newblock {\em Mathematische Annalen}, 279(1):117--123, 1987.

\bibitem{kenney09}
J.~P. Kenney.
\newblock {\em Evolution of differential invariant signatures and applications
  to shape recognition}.
\newblock PhD thesis, University of Minnesota, 2009.

\bibitem{kogan03}
I.~A. Kogan and P.~J. Olver.
\newblock Invariant {E}uler--{L}agrange equations and the invariant variational
  bicomplex.
\newblock {\em Acta Applicandae Mathematica}, 76(2):137--193, 2003.

\bibitem{medolaghi98}
P.~Medolaghi.
\newblock Classificazione delle equiazioni alle derivative parziali del secondo
  ordine, che ammettono un gruppo infinito di transformazioni puntuali.
\newblock {\em Ann. Mat. Pura Appl.}, 1(3):229--263, 1898.

\bibitem{milson15}
R.~Milson and F.~Valiquette.
\newblock Point equivalence of second-order {ODE}s: {M}aximal invariant
  classification order.
\newblock {\em Journal of Symbolic Computation}, 67:16--41, 2015.

\bibitem{olver90}
P.~J. Olver.
\newblock Classical invariant theory and the equivalence problem for particle
  {L}agrangians. {I}. {B}inary {F}orms.
\newblock {\em Adv. in Math.}, 80:39--77, 1990.

\bibitem{olver95}
P.~J. Olver.
\newblock {\em Equivalence, invariants and symmetry}.
\newblock Cambridge University Press, 1995.

\bibitem{Olver99}
P.~J. Olver.
\newblock {\em Classical invariant theory}, volume~44.
\newblock Cambridge University Press, 1999.

\bibitem{olver00}
P.~J. Olver.
\newblock {\em Applications of {L}ie groups to differential equations}, volume
  107.
\newblock Springer Science \& Business Media, 2000.

\bibitem{olver01}
P.~J. Olver.
\newblock Geometric foundations of numerical algorithms and symmetry.
\newblock {\em Applicable Algebra in Engineering, Communication and Computing},
  11(5):417--436, 2001.

\bibitem{olver08}
P.~J. Olver.
\newblock Invariant submanifold flows.
\newblock {\em Journal of Physics A: Mathematical and Theoretical},
  41(34):344017, 2008.

\bibitem{olver11}
P.~J. Olver.
\newblock Recursive moving frames.
\newblock {\em Results in Mathematics}, 60(1):423--452, 2011.

\bibitem{olver05}
P.~J. Olver and J.~Pohjanpelto.
\newblock Maurer--{C}artan forms and the structure of {L}ie pseudo-groups.
\newblock {\em Selecta Mathematica, New Series}, 11(1):99--126, 2005.

\bibitem{olver08moving}
P.~J. Olver and J.~Pohjanpelto.
\newblock Moving frames for {L}ie pseudo-groups.
\newblock {\em Canadian J. Math}, 60(6):1336--1386, 2008.

\bibitem{olver09}
P.~J. Olver and J.~Pohjanpelto.
\newblock Differential invariant algebras of {L}ie pseudo-groups.
\newblock {\em Adv. in Math.}, 222(5):1746--1792, 2009.

\bibitem{stormark}
O.~Stormark.
\newblock {\em Lie's structural approach to {PDE} systems}, volume~80.
\newblock Cambridge University Press, 2000.

\end{thebibliography}

\end{document}